\numberwithin{equation}{section}
\theoremstyle{plain}
\newtheorem{theorem}{Theorem}[section]
\newtheorem{lemma}[theorem]{Lemma}
\newtheorem{proposition}[theorem]{Proposition}
\newtheorem{corollary}[theorem]{Corollary}
\theoremstyle{definition}
\newtheorem{example}[theorem]{Example}
\newtheorem{observation}[theorem]{Observation}
\theoremstyle{remark}
\newtheorem{remark}[theorem]{Remark}
\DeclareMathOperator{\OG}{OG}
\DeclareMathOperator{\Ext}{Ext}
\DeclareMathOperator{\Hom}{Hom}
\DeclareMathOperator{\Aut}{Aut}
\DeclareMathOperator{\Rep}{Rep}
\DeclareMathOperator{\Coh}{Coh}
\DeclareMathOperator{\SO}{SO}
\DeclareMathOperator{\Res}{Res}
\DeclareMathOperator{\Diff}{Diff}
\DeclareMathOperator{\Lie}{Lie}
\DeclareMathOperator{\centralizer}{C}
\DeclareMathOperator{\rk}{rk}
\renewcommand{\ss}{\mathbf{ss}}
\newcommand{\bfP}{\mathbf{P}}
\newcommand{\bC}{{\mathbb C}}
\newcommand{\bG}{{\mathbb G}}
\newcommand{\bP}{{\mathbb P}}
\newcommand{\bQ}{{\mathbb Q}}
\newcommand{\bZ}{{\mathbb Z}}
\newcommand{\rA}{\mathrm{A}}
\newcommand{\rB}{\mathrm{B}}
\newcommand{\rC}{\mathrm{C}}
\newcommand{\rD}{\mathrm{D}}
\newcommand{\rE}{\mathrm{E}}
\newcommand{\rF}{\mathrm{F}}
\newcommand{\rG}{\mathrm{G}}
\newcommand{\rL}{\mathrm{L}}
\newcommand{\rLprime}{\mathrm{L'}}
\newcommand{\rP}{\mathrm{P}}
\newcommand{\rT}{\mathrm{T}}
\newcommand{\rV}{\mathrm{V}}
\newcommand{\rW}{\mathrm{W}}
\newcommand{\cA}{\mathcal{A}}
\newcommand{\cC}{\mathcal{C}}
\newcommand{\cD}{\mathcal{D}}
\newcommand{\cO}{\mathcal{O}}
\newcommand{\cR}{\mathcal{R}}
\newcommand{\cU}{\mathcal{U}}
\newcommand{\Db}{{\mathbf D^{\mathrm{b}}}}
\newcommand{\baromega}{\bar{\omega}}
\newcommand{\barcU}{\bar{\cU}}
\newcommand{\tildeomega}{\widetilde{\omega}}
\begin{document}

\title{On the derived category of the adjoint \\ Grassmannian of type F}

\author{Maxim N. Smirnov}
\address{
\parbox{0.95\textwidth}{
Universit\"at Augsburg,
Institut f\"ur Mathematik,
Universit\"atsstr.~14,
86159 Augsburg,
Germany
\smallskip
}}
\email{maxim.smirnov@math.uni-augsburg.de}
\email{maxim.n.smirnov@gmail.com}

\thanks{M.S. was partially supported by the Deutsche Forschungsgemeinschaft (DFG, German Research Foundation) --- Projektnummer 448537907.}

\maketitle


\begin{abstract}
We construct a full rectangular Lefschetz collection in the derived category of
the adjoint Grassmannian in type $\rF_4$. This gives the first example of a full
exceptional collection on this variety and also completes the proof of a conjecture due to Alexander Kuznetsov
and the author that relates the structure of the derived category of coherent sheaves
to the small quantum cohomology in the case of adjoint varieties for non-simply laced groups.
\end{abstract}


\section{Introduction}
\label{section:introduction}

Exceptional collections in derived categories of rational homogeneous spaces $\rG/\rP$ have a long history going back to the pioneering works of Beilinson \cite{Be} and Kapranov \cite{Ka}. It is conjectured that the derived category $\Db(\rG/\rP)$ always has a full exceptional collection and many cases of the conjecture are known by now. For an overview of the state of
the art from a few years ago we refer to~\cite[\S1.1]{KuPo}. The main advances since then are \cite{Fo19,Gu18,KS21, BKS}, where exceptional collections were constructed on some homogeneous varieties of symplectic and orthogonal groups, and the coadjoint variety of type $\rF_4$.

For homogeneous spaces of classical types $\rA_n$, $\rB_n$, $\rC_n$, $\rD_n$ and in the exceptional type
of small rank $\rG_2$ the story is relatively well studied (see~\cite{KuPo} and~\cite[\S6.4]{K06}). On the contrary,
for the exceptional types $\rF_4$, $\rE_6$, $\rE_7$, $\rE_8$ there were only two known instances of the
conjecture: the cominuscule variety of type $\rE_6$ (also called the \textsf{Cayley plane}) \cite{FM, Manivel} and
the coadjoint variety of type $\rF_4$ (this is a general hyperplane section of the Cayley plane) \cite{BKS}.

In this paper we deal with one more case in type $\rF_4$ by constructing a full exceptional Lefschetz collection (see \cite{K07}) in the bounded derived category of coherent sheaves on the adjoint variety of type $\rF_4$. To state our result we introduce a few facts on the geometry of this variety.

Let $\rG$ be a simple algebraic group of Dynkin type $\rF_4$ and let $\rP = \rP_1 \subset \rG$ be the maximal parabolic subgroup associated with the first vertex of its Dynkin diagram
\begin{equation}\label{diagram:F4}
  \dynkin[edge length = 2em,labels*={1,...,4}, arrow width=2mm, scale=1.7]F{*ooo}
\end{equation}
The homogeneous variety $X = \rG/\rP$ has a natural $\rG$-equivariant projective embedding
\begin{equation}\label{eq:projective-embedding-lie-algebra}
  X \subset \bP(\mathfrak{g}),
\end{equation}
where $\rG$ acts on its Lie algebra $\mathfrak{g}$ via the adjoint action.
In this embedding $X$ gets identified with the $\rG$-orbit of the highest root.
Hence, $X$ is called the \textsf{adjoint Grassmannian}
in type $\rF_4$. The variety $X$ is a smooth projective Fano variety of Picard rank $1$ with
\begin{equation}\label{eq:dimension-and-index}
  \dim X = 15 \quad \text{and} \quad \omega_X = \cO_X(-8),
\end{equation}
where $\cO_X(1)$ is the ample generator of the Picard group of $X$.

Let $\cU^{\omega_4}$ be the irreducible $\rG$-equivariant vector bundle on $X$ associated with the fourth vertex of the Dynkin diagram \eqref{diagram:F4} (see Section \ref{section:preliminaries} for more details). It has the following properties
\begin{equation*}
  \rk (\cU^{\omega_4}) = 6 \quad \text{and} \quad \det (\cU^{\omega_4}) = \cO(3).
\end{equation*}
This bundle turns out to be exceptional and is used, together with its twists, in our exceptional collection (see Theorem \ref{theorem:F4-P1-intro} below).

Let us now consider the tangent bundle $T_X$. It is a $\rG$-equivariant vector bundle on $X$. Note, however, that $T_X$ is not irreducible. More importantly, the vector bundle $T_X$ is not exceptional. However, a direct computation shows that we have
\begin{equation*}
  \Ext^\bullet(T_X, \cO_X) = \Bbbk[-1].
\end{equation*}
Thus, there exists a unique non-trivial $\rG$-equivariant extension
\begin{equation}\label{eq:exact-sequence-extension-tangent-bundle-introduction}
  0 \to \cO_X \to \widetilde{T}_X \to T_X \to 0.
\end{equation}
From \eqref{eq:dimension-and-index} and \eqref{eq:exact-sequence-extension-tangent-bundle-introduction}
we immediately obtain
\begin{equation*}
  \rk (\widetilde{T}_X) = 16 \quad \text{and} \quad \det (\widetilde{T}_X) = \cO_X(8).
\end{equation*}
The vector bundle $\widetilde{T}_X$ is exceptional and is used, together with its twists, in our exceptional collection (see Theorem \ref{theorem:F4-P1-intro} below). Let us also note here that $\widetilde{T}_X$ is isomorphic to the sheaf $\Diff_X^{\leq 1}$ of differential operators on $X$ of order less or equal to $1$.

\smallskip

Now we are ready to state our main result.
\begin{theorem}\label{theorem:F4-P1-intro}
  Let $X$ be the adjoint Grassmannian of Dynkin type $\rF_4$. There exists a full exceptional collection
  of length $24$
  \begin{equation}\label{eq:collection-F4-P1-intro}
    \Db(X) = \big\langle
    \cO_X, \cU^{\omega_4}, \widetilde{T}_X, \cO_X(1), \cU^{\omega_4}(1), \widetilde{T}_X(1), \dots,
    \cO_X(7), \cU^{\omega_4}(7), \widetilde{T}_X(7) \big\rangle
  \end{equation}
  consisting of $\rG$-equivariant vector bundles.
  This collection is an $\Aut(X)$-invariant rectangular Lefschetz collection.
\end{theorem}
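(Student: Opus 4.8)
The strategy is the standard two-part architecture for establishing that a Lefschetz collection is full: first prove that the listed objects form an exceptional collection, then prove that they generate $\Db(X)$. I would begin with the exceptionality and semiorthogonality, reducing everything to cohomology vanishing for the bundles $\cO_X$, $\cU^{\omega_4}$, $\widetilde{T}_X$ and their mutual twists by $\cO_X(-i)$ for $1 \le i \le 7$. Since all these bundles are $\rG$-equivariant, their sheaf cohomology can in principle be computed by the Borel–Weil–Bott theorem, but $\cU^{\omega_4}$ is only a homogeneous bundle (irreducible), $T_X$ decomposes into several irreducible summands, and $\widetilde{T}_X$ is a non-split extension, so the computation requires bookkeeping over the relevant Weil group orbits and, for $\widetilde{T}_X$, chasing the long exact sequences coming from \eqref{eq:exact-sequence-extension-tangent-bundle-introduction} twisted appropriately. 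Exceptionality of $\cO_X$ is clear; exceptionality of $\cU^{\omega_4}$ and of $\widetilde{T}_X$ is asserted in the introduction and I would take it as established in the body of the paper. The bulk of this first part is checking the roughly $\binom{24}{2}$ semiorthogonality conditions $\Ext^\bullet(E_j, E_i(-m)) = 0$ for $j > i$ in the appropriate ranges; by equivariance and the rectangular shape this collapses to a manageable finite list of Bott-type vanishings.

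For fullness I would use the standard numerical/Hilbert-polynomial criterion together with the rectangular structure. The rank of $K_0(X)$, equivalently the topological Euler characteristic of $X$, is $24$ — this should be recorded earlier (it equals the number of $\rT$-fixed points, i.e.\ $|\rW(\rF_4)|/|\rW(\rP_1)|$). Since our collection has exactly $24$ terms and they form an exceptional collection, they are automatically a basis of $K_0(X)$; what remains is to show the admissible subcategory $\cA$ they generate is all of $\Db(X)$. Here the most efficient route is the Lefschetz/residual-category machinery of Kuznetsov: one shows that the "residual category" is zero. Concretely, if $\cA_0 = \langle \cO_X, \cU^{\omega_4}, \widetilde{T}_X \rangle$ is the Lefschetz "block" and $\cA = \langle \cA_0, \cA_0(1), \dots, \cA_0(7)\rangle$, then because the twist by $\cO_X(1)$ is an autoequivalence and $\cA$ is generated by $8$ such blocks while $\omega_X = \cO_X(-8)$, one checks that $\cA$ is preserved by $- \otimes \cO_X(1)$, hence by Serre duality $\cA = {}^\perp(\cA^\perp) = \cA$, and a standard argument (e.g.\ via the fact that a twist-stable admissible subcategory containing $\cO_X$ on a Fano of index $8$ with the right numerical invariants must be everything) forces $\cA^\perp = 0$. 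Alternatively, and perhaps more robustly, I would exhibit an explicit resolution: show that $\cO_X(-1)$ — equivalently every line bundle — lies in $\cA$ by building a Koszul-type or Bott-type complex of the generating bundles resolving it, using the geometry of the projective embedding $X \subset \bP(\mathfrak g)$ or a suitable incidence variety (a $\bP^1$- or quadric-bundle over a smaller homogeneous space) and induction via a semiorthogonal decomposition of that correspondence.

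The main obstacle I anticipate is fullness rather than semiorthogonality: the numerics guarantee the right count of objects, but turning that into an actual generation statement for this particular $15$-dimensional exceptional-type variety is where the real work lies. The cleanest tool is likely a geometric one: find a $\rG$-equivariant fibration or birational correspondence relating $X$ to better-understood homogeneous spaces (for instance the other $\rF_4$-Grassmannians, or a quadric/$\bP^1$-bundle structure over $\rG/\rP_{\{3,4\}}$ or similar), push the collection through it using Orlov-type projective-bundle and blow-up formulas, and reduce to known full exceptional collections. If no such clean correspondence is available, the fallback is the direct approach: prove $\cA$ contains $\cO_X(8) = \omega_X^{-1}$ (hence, by twisting, all of $\Pic(X)$ and then all line bundles, which generate $\Db(X)$) by resolving it via an explicit complex built from $\cO_X(i)$, $\cU^{\omega_4}(i)$, $\widetilde{T}_X(i)$; constructing that complex and verifying its exactness by Bott's theorem is the computationally heaviest step. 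A secondary technical point worth isolating is the careful analysis of $\widetilde{T}_X$: because it is a non-split extension of $T_X$ by $\cO_X$, every cohomology computation involving it must be done via the long exact sequence and one must verify that the relevant connecting maps behave as expected, which uses the uniqueness of the extension class in $\Ext^1(T_X,\cO_X) \cong \Bbbk$.
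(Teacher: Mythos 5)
Your first half (exceptionality) follows essentially the paper's route: reduce everything to Borel--Weil--Bott vanishing via tensor-product decompositions for the Levi, and handle $\widetilde{T}_X$ through the extensions $0\to\cU^{\omega_2}(-1)\to T_X\to\cO(1)\to 0$ and $0\to\cO\to\widetilde{T}_X\to T_X\to 0$. One warning: ``chasing the long exact sequences'' is not quite enough for the hardest cases, namely $\Ext^\bullet(\widetilde{T}_X(t),\widetilde{T}_X)$ for $t\in\{0,1,7\}$, where the semisimplification produces non-acyclic summands. The paper gets around this by constructing a monad $\widetilde{T}_X(-1)\to (V^{\omega_1}\oplus\Bbbk)\otimes\cO\to\widetilde{T}_X$ with middle cohomology $\cU^{2\omega_4}(-1)$ and computing against that instead; some such extra input is genuinely needed.

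The fullness half has a real gap. Your primary argument --- that the collection has $24$ terms, $24=\operatorname{rk}K_0(X)$, the subcategory is twist-stable, and therefore it must be everything --- is not a proof. The displayed step ``$\cA={}^{\perp}(\cA^{\perp})=\cA$'' is a tautology and forces nothing; and there is no general principle that an admissible subcategory whose $K_0$ has full rank is all of $\Db(X)$ (this is exactly the phantom/quasi-phantom issue, and it is open even for homogeneous spaces). Your fallback is worse: line bundles do \emph{not} generate $\Db(X)$ here. The classes $[\cO_X(t)]$ in $K_0(X)\otimes\bQ$ span a subspace of dimension at most $\dim X+1=16<24$, so no amount of resolving $\cO_X(8)$ or other twists by members of the collection can establish fullness. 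Your instinct to find a geometric correspondence with a better-understood homogeneous space is the right one, but none of the specific devices you name ($\bP^1$- or quadric-bundle structures, blow-up formulas) is what works. The paper instead realizes copies of $\OG(2,8)=\bar{\rG}/\bar{\rP}$ (coming from the chain of root subsystems $\rD_4\subset\rB_4\subset\rF_4$) as zero loci of regular global sections of $\cU^{\omega_4}$; translating by $\rG$ these sweep out $X$. Tensoring the Koszul complex of such a section with the collection, and using the known full exceptional collection on $\OG(2,8)$ from \cite{KS21} together with Kuznetsov's lemma that an object restricting to zero on every member of a sweeping family vanishes, one shows that anything right-orthogonal to the collection is zero. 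This also requires a substantial list of auxiliary monads showing that many further irreducible equivariant bundles (e.g.\ $\cU^{\omega_3}(m)$, $\cU^{2\omega_4}(m)$, $\cU^{\omega_2+\omega_4}(m)$, \dots) already lie in the subcategory generated by the collection --- a step entirely absent from your plan.
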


The concept of adjoint varieties is meaningful in any Dynkin type and their
derived categories share some common features. In a joint work with Alexander Kuznetsov
\cite{KS21}, drawing motivation from the structure of the small quantum cohomology
of these varieties, we have proposed some conjectures on the structure of their
derived categories. In the particular case of adjoint varieties in non-simply
laced Dynkin types (i.e., types $\rB_n$, $\rC_n$, $\rF_4$, or $\rG_2$) our conjecture
\cite[Conjecture 1.7]{KS21} predicts the existence of a full $\Aut$-invariant
rectangular Lefschetz collection. Therefore, since \eqref{eq:collection-F4-P1-intro}
has all these properties, we obtain the following.
\begin{corollary}\label{corollary:F4-P1-intro}
  The conjecture \cite[Conjecture 1.7]{KS21} holds for the adjoint Grassmannian in type $\rF_4$.
\end{corollary}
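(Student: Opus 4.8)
Since \cite[Conjecture~1.7]{KS21} asserts, for the adjoint variety of a simple group of non-simply laced Dynkin type, exactly the existence of a full $\Aut$-invariant rectangular Lefschetz collection, and $\rF_4$ is non-simply laced, Corollary~\ref{corollary:F4-P1-intro} is an immediate consequence of Theorem~\ref{theorem:F4-P1-intro}; so the problem is to prove the theorem. The plan is to break it into four parts: (i) $\cO_X$, $\cU^{\omega_4}$ and $\widetilde{T}_X$ are exceptional and, in that order, form an exceptional collection $\cA$; (ii) the eight blocks $\cA, \cA(1), \dots, \cA(7)$ are semiorthogonal, so that \eqref{eq:collection-F4-P1-intro} is a rectangular Lefschetz collection of length $24$; (iii) this collection is full; (iv) it is $\Aut(X)$-invariant. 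Part~(iv) costs nothing: the Dynkin diagram of $\rF_4$ has no nontrivial automorphisms, so $\Aut(X)$ coincides with its identity component $\rG/Z(\rG)$; as $\cA$ is generated by $\rG$-equivariant bundles it is stable under this action, and rectangularity then makes the whole Lefschetz filtration $\Aut(X)$-stable, which is the meaning of $\Aut$-invariance in \cite{KS21}.

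Parts~(i) and~(ii) reduce to a finite, though lengthy, list of sheaf-cohomology computations. For vector bundles one has $\Ext^\bullet(\cE, \cF(-k)) = H^\bullet(X, \cE^\vee \otimes \cF(-k))$, so it suffices to evaluate $H^\bullet(X, \cE^\vee \otimes \cF(-k))$ for the nine pairs $\cE, \cF \in \{\cO_X, \cU^{\omega_4}, \widetilde{T}_X\}$: vanishing for $1 \le k \le 7$ yields the semiorthogonality between distinct blocks, while the correct values at $k = 0$ yield exceptionality and the order within a block. Every $\cE^\vee \otimes \cF$ is a $\rG$-equivariant bundle on $X = \rG/\rP_1$; replacing each occurrence of $\widetilde{T}_X$ by the extension \eqref{eq:exact-sequence-extension-tangent-bundle-introduction}, one is left with bundles assembled from $\cO_X$, $\cU^{\omega_4}$ and $T_X$, which decompose into direct sums of irreducible equivariant bundles whose cohomology is computed by Borel--Weil--Bott (Bott's algorithm with the dotted affine Weyl group action on $\rG/\rP_1$). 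The recorded identity $\Ext^\bullet(T_X, \cO_X) = \Bbbk[-1]$ feeds back through \eqref{eq:exact-sequence-extension-tangent-bundle-introduction} to give $\Ext^\bullet(\widetilde{T}_X, \cO_X) = 0$ and, once $\Ext^\bullet(T_X, T_X)$ and $\Ext^\bullet(\cO_X, T_X)$ are known, the exceptionality of $\widetilde{T}_X$. These computations are routine but numerous, and I expect them to fill most of the preliminary sections.

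The real difficulty is Part~(iii), fullness. The length of \eqref{eq:collection-F4-P1-intro} is $24 = 3 \cdot 8$, matching both the Fano index $8$ of \eqref{eq:dimension-and-index} and $\rk K_0(X) = |W(\rF_4)| / |W(\rC_3)| = 1152/48$; but the right length does not by itself rule out a phantom orthogonal complement. Writing $\cD \subseteq \Db(X)$ for the admissible subcategory generated by the $24$ bundles, one must show $\cD^\perp = 0$. The route I would take exploits the two-step flag variety $Z = \rG/\rP_{1,4}$ and its projections $p\colon Z \to X = \rG/\rP_1$ and $q\colon Z \to Y := \rG/\rP_4$, where $Y$ is the coadjoint variety of type $\rF_4$, on which a full rectangular Lefschetz collection is available by \cite{BKS}. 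Here $p$ is the $\bP^5$-bundle $\bP_X(\cU^{\omega_4})$ (up to dualization), because over $\rG/\rP_1$ the rank-$6$ bundle $\cU^{\omega_4}$ is the twist of the standard representation of the $\rC_3$-factor of the Levi; and $q$ is a smooth quadric bundle, its fibre being the $5$-dimensional quadric $\OG(1,7)$, a homogeneous space of the $\rB_3$-factor of the Levi of $\rP_4$. Orlov's projective bundle theorem then gives $\Db(Z) = \big\langle p^*\Db(X) \otimes \cO_p(i) \big\rangle_{i=0}^{5}$ for the relative $\cO_p(1)$, and Kuznetsov's semiorthogonal decomposition for quadric bundles presents $\Db(Z)$ as six copies of $q^*\Db(Y)$, one of them twisted by a relative spinor bundle. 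The plan is then: transport the full collection of $Y$ along $q^*$ to a full collection on $Z$; mutate it into the $p$-adapted shape $\big\langle p^*(-) \otimes \cO_p(i) \big\rangle_{i=0}^{5}$; read off the resulting full exceptional collection on $X$; and identify it, up to mutations, with \eqref{eq:collection-F4-P1-intro}, forcing $\cD = \Db(X)$. Carrying out this last identification --- simultaneously tracking $p^*\cO_X(1)$, $q^*\cO_Y(1)$, the relative ample classes $\cO_p(1)$ and $\cO_q(1)$, and the spinor summand, and verifying that the mutated collection really is $p^*\cA$ twisted block by block --- is where the technical weight of the argument lies, and it is the step I expect to be the main obstacle. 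With (i)--(iv) in hand, Theorem~\ref{theorem:F4-P1-intro} follows, and Corollary~\ref{corollary:F4-P1-intro} with it.
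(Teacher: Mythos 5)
Your reduction of the corollary to Theorem~\ref{theorem:F4-P1-intro} is exactly the paper's (one-line) proof of this statement, and your plan for exceptionality and $\Aut$-invariance also matches what the paper does in \S\ref{section:construction}: everything is reduced to Borel--Weil--Bott computations for irreducible summands of $\cE^\vee\otimes\cF(-k)$, with $\widetilde{T}_X$ handled via the extension sequences. The geometric facts you invoke for fullness are also correct: $\rG/\rP_{1,4}\to\rG/\rP_1$ is indeed the $\bP^5$-bundle $\bP_X(\cU^{\omega_4})$ (the subdiagram on vertices $2,3,4$ is $\rC_3$ with vertex $4$ at the long end, so the fibre is $\Sp_6/\rP_1=\bP^5$), and $\rG/\rP_{1,4}\to\rG/\rP_4$ is a $5$-dimensional quadric bundle.

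The genuine gap is in Part~(iii). What you give there is a strategy, not an argument: the decisive step --- mutating the $q$-adapted decomposition of $\Db(Z)$ coming from \cite{BKS} and the quadric-bundle theorem into the $p$-adapted form $\langle p^*\Db(X)\otimes\cO_p(i)\rangle_{i=0}^5$, and then identifying the resulting generators of $\Db(X)$ with the subcategory generated by \eqref{eq:collection-F4-P1-intro} --- is exactly the content of fullness, and you explicitly defer it. Two concrete issues would have to be resolved before this could count as a proof: (a) for odd-dimensional quadric fibres Kuznetsov's decomposition produces a component $\Db(Y,\mathcal{C}\ell_0)$ over the even Clifford algebra, and replacing it by a twist of $q^*\Db(Y)$ requires exhibiting a global relative spinor bundle on $Z$ over $Y$ (plausible here since everything is homogeneous, but it must be checked); (b) there is no a priori reason the mutated blocks land in $p^*\Db(X)\otimes\cO_p(i)$ at all, let alone that the $24$ objects so produced generate the same subcategory as $\cO_X,\cU^{\omega_4},\widetilde{T}_X$ and their twists --- this comparison is where such arguments typically either succeed or collapse, and nothing in your outline controls it. For contrast, the paper proves fullness by an entirely different mechanism: it realizes copies of $\OG(2,8)=\bar{\rG}/\bar{\rP}$ (for $\rD_4\subset\rF_4$) as zero loci of regular sections of $\cU^{\omega_4}$ sweeping out $X$ (Lemmas~\ref{lemma:B4-and-D4-as-linear-sections} and~\ref{lemma:zero-loci}), shows via the monads of Lemma~\ref{lemma:monads} that $E\otimes\Lambda^k\cU^{\omega_4}\in\cD$ for the relevant $E$ (Lemma~\ref{lemma:key-lemma}), and then uses the Koszul complex of a section together with the known collection \eqref{eq:collection-D4-P2} on $\OG(2,8)$ from \cite{KS21} to conclude $\cD^\perp=0$. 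Until the mutation/identification step of your route is actually carried out, the fullness part --- and hence the corollary --- remains unproved in your write-up.
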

Since in types $\rB_n$, $\rC_n$, and $\rG_2$ the conjecture \cite[Conjecture 1.7]{KS21}
is already known to hold, Corollary \ref{corollary:F4-P1-intro} finishes its proof
by treating the only remaining case.

\begin{remark}
By \cite[Proposition 6.3]{ChPe} the small quantum cohomology ring of $X$ is known to be semisimple. Hence, Theorem~\ref{theorem:F4-P1-intro} provides yet another instance where the first part of Dubrovin's conjecture (see~\cite{Du}) is known to hold.
\end{remark}

The proof of Theorem \ref{theorem:F4-P1-intro} naturally splits into two steps.
First, by a direct computation we show that the collection \eqref{eq:collection-F4-P1-intro}
is exceptional. Second, we prove that \eqref{eq:collection-F4-P1-intro} is full by
restricting it to a family of subvarieties of $X$ that sweep the whole $X$ and are
all isomorphic to the even orthogonal Grassmannian $\OG(2,8)$. It is essential that
for $\OG(2,8)$ a suitable exceptional collection is already known by \cite{KS21}.
In this respect the proof is similar to the proofs of fullness given in \cite{KS21, Ku08a}.
The extension $\widetilde{T}_{\OG(2,8)}$ of the tangent bundle $T_{\OG(2,8)}$ by the structure sheaf as in \eqref{eq:exact-sequence-extension-tangent-bundle-introduction} also appears in the aforementioned collection on $\OG(2,8)$.

\begin{observation}
  Let us discuss how the use of the extension $\widetilde{T}_X$ of the tangent
  bundle $T_X$ by the structure sheaf $\cO_X$ to construct exceptional collections
  can be extended to other homogenous spaces. We restrict our attention to homogenous
  spaces $\rG / \rP$ with $\rP$ maximal parabolic and we label them by pairs
  $(D, k)$, where $D$ is the Dynkin diagram of $\rG$ and $k$ is the vertex in $D$ that
  corresponds to the parabolic $\rP$.
  We use the Bourbaki numbering of vertices in Dynkin diagrams \cite{Bourbaki}.

  For any $\rG / \rP$  as above we have
  \begin{equation*}
    \Ext^\bullet(T_{\rG / \rP}, \cO_{\rG / \rP}) = H^\bullet(\rG / \rP, \Omega_{\rG / \rP}) = \Bbbk[-1].
  \end{equation*}
  Hence, there always exists a unique non-trivial $\rG$-equivariant extension $\widetilde{T}_{\rG / \rP}$ of
  the tangent bundle $T_{\rG / \rP}$ by the structure sheaf $\cO_{\rG / \rP}$.
  SageMath \cite{sagemath} computations suggest the following patterns:
  \begin{enumerate}
    \item For any $\rG / \rP$ we have
    \begin{equation}\label{eq:vanishing-higher-exts-intro}
      \Ext^i(T_{\rG / \rP}, T_{\rG / \rP}) = 0 \quad \text{for} \quad i \geq 2.
    \end{equation}

    \item \label{exceptions-1-intro}
    The extension $\widetilde{T}_{\rG / \rP}$ is an exceptional object for any
    $\rG / \rP$ with the following exceptions:
    \begin{equation}\label{eq:list-of-exceptions-1-intro}
      \begin{aligned}
        & (\rA_n, 1) \text{  and  } (\rA_n, n); \\
        & (\rB_n, n-1) \text{  for  } n \geq 3 \text{  and also  } (\rB_2, 2); \\
        & (\rC_n, 1) \text{  and  } (\rC_n, 2) \text{  for  } n \geq 3 \text{  and also  } (\rC_2, 1); \\
        & (\rF_4, 4) \text{  and  } (\rG_2, 2).
      \end{aligned}
    \end{equation}
    These exceptions seem to be closely related to the failure of the Yoneda map
    \begin{equation}\label{eq:yoneda-map-intro}
      \Ext^1(T_{\rG / \rP}, \cO_{\rG / \rP}) \times \Ext^0(\cO_{\rG / \rP}, T_{\rG / \rP})
      \to \Ext^1(T_{\rG / \rP}, T_{\rG / \rP})
    \end{equation}
    to be an isomorphism. In the cases $(\rA_n, 1)$, $(\rA_n, n)$, $(\rB_2, 2)$ and
    $(\rC_n, 1)$ the variety $\rG / \rP$ is a projective space and it is well-known
    that in this case already $T_{\rG / \rP}$ itself is an exceptional object. Hence,
    $\Ext^1(T_{\rG / \rP}, T_{\rG / \rP})$ vanishes and \eqref{eq:yoneda-map-intro}
    is not injective. For all other exceptions the map \eqref{eq:yoneda-map-intro}
    is not surjective.

    \medskip

    \item
    It is well-known (for example, see \cite[Section IV.8, Theorem 2]{Akhiezer}) that for any $\rG / \rP$
    except for $(\rB_n, n)$, $(\rC_n, 1)$ and $(\rG_2, 1)$ we have an isomorphism
    \begin{equation*}
      H^0(\rG / \rP, T_{\rG / \rP}) = \mathfrak{g}.
    \end{equation*}
    Hence, the above discussion
    on the Yoneda map \eqref{eq:yoneda-map-intro} suggests that for any $\rG / \rP$
    with the exceptions given by \eqref{eq:list-of-exceptions-1-intro},
    and also $(\rB_n, n)$ and $(\rG_2, 1)$,
    we have an isomorphism
    \begin{equation}\label{eq:moduli}
      \Ext^1(T_{\rG / \rP}, T_{\rG / \rP}) = \mathfrak{g}
    \end{equation}
    and all the higher $\Ext$-groups vanish as in \eqref{eq:vanishing-higher-exts-intro}.

    \medskip

    \item The sequence of vector bundles of the form
    \begin{multline}\label{eq:hypothetical-lefschetz-collection-intro}
      \hspace{50pt} \cO_{\rG / \rP}, \widetilde{T}_{\rG / \rP}, \cO_{\rG / \rP}(1), \widetilde{T}_{\rG / \rP}(1), \dots , \cO_{\rG / \rP}(s-1), \widetilde{T}_{\rG / \rP}(s-1), \\
      \cO_{\rG / \rP}(s), \dots, \cO_{\rG / \rP}(r-1)
    \end{multline}
    where $r$ is the index of $\rG / \rP$ and $s \in [1, r]$ is the maximal integer defined by the condition
    \begin{equation*}
      \Ext^\bullet(\widetilde{T}_{\rG / \rP}(i), \widetilde{T}_{\rG / \rP}) = 0 \quad \text{for all} \quad 0 < i < s,
    \end{equation*}
    is a Lefschetz collection for any $\rG / \rP$ with the following exceptions:
    \begin{equation}
      \begin{aligned}
        & \text{all cases from \eqref{eq:list-of-exceptions-1-intro} and all quadrics.}
      \end{aligned}
    \end{equation}
    Note that all quadrics are given by the infinite series $(\rB_n, 1)$ for $n \geq 2$
    and $(\rD_n, 1)$ for $n \geq 4$, but also small rank sporadic cases $(\rA_3, 2)$,
    $(\rB_3, 3)$, $(\rC_2, 2)$, $(\rD_4, 3)$, $(\rD_4, 4)$, and $(\rG_2, 1)$.

    Moreover, the cases
    \begin{equation}
      \begin{aligned}
        & (\rB_n, 2) \text{  and  } (\rD_n, 2) \text{  for  } n \geq 4; \\
        & (\rA_5, k) \text{  for  } k \in [2,4]; \\
        & (\rC_3, 3)
      \end{aligned}
    \end{equation}
    are the only cases, where the collection \eqref{eq:hypothetical-lefschetz-collection-intro} is
    a non-rectangular Lefschetz collection, i.e. $s \neq r$.
  \end{enumerate}

\bigskip

  At this point we do not have rigorous proofs of the above claims. We hope to be
  able to say more on this in a future work.
\end{observation}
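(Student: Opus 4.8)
The unifying idea is that all four assertions are statements about the cohomology of $\rG$-equivariant bundles built out of $T_{\rG/\rP}$ by tensor products, Serre twists, and the single extension class $e$ of~\eqref{eq:exact-sequence-extension-tangent-bundle-introduction}; so the plan is to reduce everything to the Borel--Weil--Bott theorem together with a combinatorial analysis of the Bott chambers of each Dynkin diagram. The first step is to record the standard identifications $\Ext^\bullet(T_{\rG/\rP}(j), T_{\rG/\rP}(i)) = H^\bullet(\rG/\rP, \CEnd(T_{\rG/\rP})(i-j))$, $\Ext^\bullet(\cO(j), T_{\rG/\rP}(i)) = H^\bullet(\rG/\rP, T_{\rG/\rP}(i-j))$, $\Ext^\bullet(T_{\rG/\rP}(j), \cO(i)) = H^\bullet(\rG/\rP, \Omega^1_{\rG/\rP}(i-j))$ and so on, which turn every quantity in sight into the cohomology of a homogeneous bundle attached to a $\rP$-module of the shape $(\mathfrak g/\mathfrak p)^{\otimes a} \otimes ((\mathfrak g/\mathfrak p)^*)^{\otimes b}$ twisted by a character. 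Such $\rP$-modules are almost never irreducible, so for each of them one fixes a $\rP$-stable filtration with irreducible homogeneous graded pieces $\cE(\lambda)$, computes each $H^\bullet(\rG/\rP, \cE(\lambda))$ by Bott, and then assembles the answer from the associated hypercohomology spectral sequence. The crucial structural point is that the differentials of these spectral sequences --- and all the connecting maps that appear below --- are controlled by $e \in \Ext^1(T_{\rG/\rP}, \cO) = H^1(\rG/\rP, \Omega^1_{\rG/\rP}) = \Bbbk$ and by the Yoneda pairing~\eqref{eq:yoneda-map-intro}.

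The heart of the argument is the computation of $H^\bullet(\rG/\rP, \CEnd T_{\rG/\rP})$, which underlies claims~(1) and~(3). A convenient reformulation uses the surjection $\mathfrak g \otimes \cO \twoheadrightarrow T_{\rG/\rP}$ coming from the infinitesimal $\rG$-action, whose kernel is the bundle $\rG \times_{\rP} \mathfrak p$ associated with the adjoint $\rP$-module $\mathfrak p$: applying $\Hom(T_{\rG/\rP}, -)$ to the resulting short exact sequence and feeding in the known value $\Ext^\bullet(T_{\rG/\rP}, \cO) = \Bbbk[-1]$ shows that claim~(1), the vanishing $\Ext^{\geq 2}(T_{\rG/\rP}, T_{\rG/\rP}) = 0$, is equivalent to the vanishing of $\Ext^{\geq 3}$ from $T_{\rG/\rP}$ into $\rG \times_{\rP} \mathfrak p$, which is again a Bott computation but on a more tractable bundle. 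Claim~(3) is the companion bookkeeping: once the cohomology of $\CEnd T_{\rG/\rP}$ is in hand, one reads off that $H^1$ equals the adjoint representation $\mathfrak g$ outside the pairs in~\eqref{eq:list-of-exceptions-1-intro} together with $(\rB_n,n)$ and $(\rG_2,1)$ --- precisely the pairs where $T_{\rG/\rP}$ is already exceptional (the projective spaces, where the space is $0$), or where $H^0(\rG/\rP, T_{\rG/\rP})$ is strictly larger than $\mathfrak g$, or where the Yoneda map~\eqref{eq:yoneda-map-intro} fails to be surjective --- while $H^{\geq 2}(\rG/\rP, \CEnd T_{\rG/\rP})$ vanishes in all cases by claim~(1).

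Claims~(2) and~(4) should then follow formally from~(1), (3) and the (Bott-computable) cohomology of $\cO(k)$, $T_{\rG/\rP}(k)$, $\Omega^1_{\rG/\rP}(k)$ and $\CEnd(T_{\rG/\rP})(k)$. For~(2): applying $\Hom(\widetilde T_{\rG/\rP}, -)$ and $\Hom(-, \widetilde T_{\rG/\rP})$ to~\eqref{eq:exact-sequence-extension-tangent-bundle-introduction}, and using $\Ext^\bullet(T_{\rG/\rP}, \cO) = \Bbbk[-1]$, the simplicity of $T_{\rG/\rP}$, the standard vanishing $H^{>0}(\rG/\rP, T_{\rG/\rP}) = 0$, and claim~(1), one finds that $\widetilde T_{\rG/\rP}$ is exceptional if and only if Yoneda multiplication with $e$ induces an isomorphism $\Ext^0(\cO, T_{\rG/\rP}) \xrightarrow{\sim} \Ext^1(T_{\rG/\rP}, T_{\rG/\rP})$; invoking~(3), one then verifies that this holds exactly off the list~\eqref{eq:list-of-exceptions-1-intro}, the map being non-injective for the projective spaces and non-surjective for the remaining exceptions, in accordance with the dichotomy already described around~\eqref{eq:yoneda-map-intro}. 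For~(4): being a Lefschetz collection requires only the block-exceptional structure (not fullness), so it amounts to the exceptionality of each block provided by~(2) together with the semiorthogonalities $\Ext^\bullet(\widetilde T_{\rG/\rP}(j), \widetilde T_{\rG/\rP}(i)) = \Ext^\bullet(\widetilde T_{\rG/\rP}(j), \cO(i)) = \Ext^\bullet(\cO(j), \widetilde T_{\rG/\rP}(i)) = 0$ for $i < j$ in the prescribed range, while the block length $s$ is by definition the first twist at which $\Ext^\bullet(\widetilde T_{\rG/\rP}(i), \widetilde T_{\rG/\rP})$ fails to vanish; the excluded quadrics and the non-rectangular families $(\rB_n,2)$, $(\rD_n,2)$, $(\rA_5,k)$, $(\rC_3,3)$ are exactly those for which one of these vanishings breaks down --- spinor-type cohomology in the quadric case, an early nonzero $\Ext^\bullet(\widetilde T_{\rG/\rP}(i), \widetilde T_{\rG/\rP})$ in the non-rectangular one.

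The main obstacle is uniformity over the infinite classical series. For any single pair $(D,k)$ all of the above is a finite Bott computation --- precisely what the SageMath experiments already carry out --- so what is missing is a way to control, as functions of the rank $n$, the $\rL$-module decompositions of bundles such as $\CEnd(T_{\rG/\rP}) \otimes \cO(-k)$ and the possibly nonzero differentials in their Bott spectral sequences, simultaneously for each of the families $(\rA_n,k)$, $(\rB_n,k)$, $(\rC_n,k)$, $(\rD_n,k)$, and then to dispose separately of the sporadic low-rank coincidences: the exceptional-type pairs, the special parabolics in~\eqref{eq:list-of-exceptions-1-intro}, and the sporadic quadrics $(\rA_3,2)$, $(\rB_3,3)$, $(\rC_2,2)$, $(\rD_4,3)$, $(\rD_4,4)$, $(\rG_2,1)$. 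I do not anticipate a conceptual difficulty beyond this large but in-principle-routine bookkeeping, which is presumably the reason the author defers it to future work.
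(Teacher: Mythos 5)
The paper offers no proof of this Observation: it is stated explicitly as a collection of patterns suggested by SageMath computations, and closes with ``At this point we do not have rigorous proofs of the above claims.'' Your proposal therefore cannot be compared to an argument in the paper; judged on its own terms it has exactly the gap the author acknowledges. The reductions you set up are sensible and correct in outline --- rewriting the relevant $\Ext$-groups as cohomology of homogeneous bundles, using $0 \to \rG\times_{\rP}\mathfrak p \to \mathfrak g\otimes\cO_{\rG/\rP} \to T_{\rG/\rP}\to 0$ to shift claim~(1) by one cohomological degree, and the long-exact-sequence analysis showing that exceptionality of $\widetilde T_{\rG/\rP}$ is governed by whether the Yoneda map \eqref{eq:yoneda-map-intro} is an isomorphism (this last point fleshes out the heuristic the author already states in item~(2)). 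But the actual content of the Observation is the case-by-case outcome of the Borel--Weil--Bott computations over all pairs $(D,k)$, including four infinite families and a specific list of sporadic exceptions, and your final paragraph concedes that you have not carried these out. Declaring the uniform control of the $\rL$-module decomposition of $\CEnd(T_{\rG/\rP})(-m)$ and of the attendant spectral sequence differentials to be ``in-principle-routine bookkeeping'' does not constitute a proof; it is precisely the missing step.

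One concrete overstatement is worth flagging. You assert that all differentials and connecting maps in your spectral sequences are controlled by the single class $e\in\Ext^1(T_{\rG/\rP},\cO_{\rG/\rP})=\Bbbk$ and the Yoneda pairing. That is not true in general: for a non-cominuscule maximal parabolic the tangent bundle carries an equivariant filtration with several irreducible graded pieces (in the adjoint $\rF_4$ case of this paper, \eqref{eq:exact-sequence-tangent-bundle} exhibits two), and the differentials in the spectral sequence computing $H^\bullet(\rG/\rP,\CEnd(T_{\rG/\rP})(m))$ are governed by the extension classes among those graded pieces, i.e.\ by elements of various $\Ext^1(\cU^\lambda,\cU^\mu)$, not by $e$ alone. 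Deciding whether these differentials vanish is where the real difficulty sits: even in the single case the paper does treat rigorously, the associated-graded computation is not enough and the author has to introduce the auxiliary monads of Lemma~\ref{lemma:monad-U} and Lemma~\ref{lemma:monads} to pin down the relevant $\Ext$-groups. Until you supply stable, rank-uniform versions of such arguments for each classical series and check the sporadic low-rank cases, the Observation remains, as in the paper, a conjecture supported by finite computation rather than a theorem.
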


\begin{remark}
  Asking for the Yoneda map \eqref{eq:yoneda-map-intro} to be an isomorphism
  is very close to saying that the pair of vector bundles
  $\{ \cO_{\rG / \rP}, T_{\rG / \rP} \}$ forms an \textsf{exceptional block} in the sense of Kuznetsov and Polishchuk \cite[Definition 3.1]{KuPo}.
  The only difference is that in \cite{KuPo}
  exceptional blocks always consist of irreducible vector bundles,
  but the tangent bundle $T_{\rG / \rP}$
  is irreducible only for cominuscule $\rG / \rP$.
\end{remark}

The paper is organised as follows. In \S\ref{section:preliminaries} we recall the necessary general background
on $\rG$-equivariant vector bundles on $\rG/\rP$ and collect some auxiliary facts for our particular
setting. In \S\ref{section:construction} we construct and prove exceptionality of
\eqref{eq:collection-F4-P1-intro}. Finally, in \S\ref{section:fullness} we show that
\eqref{eq:collection-F4-P1-intro} is full.

\medskip

\noindent {\bf Acknowledgements.} First and foremost I would like to thank Alexander Kuznetsov
for his continuing support and mathematical generosity. Our innumerable discussions have greatly
influenced this work. Further, I am very grateful to Michel Brion and Nicolas Perrin for
explaining to me the proof of Lemma~\ref{lemma:B4-and-D4-as-linear-sections}. Finally,
I would like to thank HIM and MPIM in Bonn for the great working conditions during the
preparation of this paper.

\newpage
\section{Preliminaries}
\label{section:preliminaries}

We work over a fixed algebraically closed field $\Bbbk$ of characteristic zero.

\subsection{Generalities on equivariant vector bundles}
\label{subsection:generalities-on-equivarian-vector-bundles}

Let $\rG$ be a connected simply connected simple algebraic group and $\rP$ its parabolic subgroup.
Let us also fix a maximal torus~$\rT$ and a Borel subgroup $\rB$ in $\rG$ such that we have inclusions
\begin{equation*}
  \rT \subset \rB \subset \rP \subset \rG
\end{equation*}
The Borel subgroup $\rB$ fixes a choice of \emph{negative roots} of $\rG$ with respect to $\rT$,
i.e. the root subspaces of $\rB$ correspond to negative roots.
We also have the positive Borel subgroup $\rB^+$ such that
\begin{equation*}
  \rT = \rB \cap \rB^+,
\end{equation*}
and whose roots spaces correspond to positive roots.

Finally, let $\rL \subset \rP$ be the Levi subgroup of $\rP$ containing the maximal
torus~$\rT$. Note that the intersections $\rB \cap \rL$ and $\rB^+ \cap \rL$ are
negative and positive Borel subgroups in $\rL$ respectively.

Recall that there is a monoidal equivalence of categories between the category of $\rG$-equivariant
coherent sheaves on $\rG/\rP$ and the category of finite dimensional representations of $\rP$
\begin{equation}\label{eq:equivalence-homogeneous-bundles-P-reps}
  \Coh^{\rG}(\rG/\rP) \overset{\cong}{\rightarrow} \Rep(\rP)
\end{equation}
given by sending a $\rG$-equivariant sheaf $F$ to its fiber $F_{[\rP]}$ at the point $[\rP] \in \rG/\rP$.
Since the group $\rP$ is not reductive, the category $\Rep(\rP)$ is not semisimple and working with it can
be quite hard. However, any irreducible representation of $\rP$ is completely determined by its restriction
to $\rL$, as the unipotent radical of $\rP$ acts trivially on it. Hence, we can identify irreducible representations
of $\rP$ and $\rL$. Since $\rL$ is a reductive group, we have the highest weight theory describing its finite dimensional
irreducible representations. This makes working with irreducible representations of $\rP$ much easier.

We denote by $\bfP_{\rL} = \bfP_{\rG}$ the weight lattices for $\rL$ and $\rG$ with their natural identification and by
\begin{equation*}
  \bfP_{\rG}^+ \subset \bfP_{\rL}^+
\end{equation*}
the cones of dominant weights (with respect to $\rB^+$ and $\rB^+ \cap \rL$ respectively).
For each dominant weight $\lambda \in \bfP_{\rL}^+$
we denote by $\rV_{\rL}^\lambda$ the corresponding irreducible representation of
$\rL$ and by $\cU^\lambda$ the $\rG$-equivariant vector bundle on $\rG/\rP$
corresponding to it via the equivalence \eqref{eq:equivalence-homogeneous-bundles-P-reps}.
Similarly, for $\lambda \in \bfP_{\rG}^+$ we denote by $\rV_{\rG}^\lambda$ the
corresponding irreducible representation of $\rG$.

We denote by $\rW$ the Weyl group of $\rG$ and by $\rW^{\rL} \subset \rW$ the Weyl group of $\rL$. We denote by $w_0 \in \rW$ and $w_0^{\rL} \in \rW^{\rL}$ the longest elements.

We have the following lemma.
\begin{lemma}\label{lemma:tensor-and-dual-general}
  \
  \begin{enumerate}
    \item We have $(\cU^\lambda)^\vee \cong \cU^{-w_0^\rL \lambda}$.

    \item If $\rV_\rL^\lambda \otimes \rV_\rL^\mu = \bigoplus \rV_\rL^\nu$, then $\cU^\lambda \otimes \cU^\mu = \bigoplus \cU^\nu$.
  \end{enumerate}
\end{lemma}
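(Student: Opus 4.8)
The plan is to deduce both statements directly from the monoidal equivalence \eqref{eq:equivalence-homogeneous-bundles-P-reps}, translating the questions about $\rG$-equivariant bundles on $\rG/\rP$ into questions about finite-dimensional $\rP$-representations, and then invoking the highest-weight theory of the reductive Levi $\rL$.

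For part (2), I would first note that the fiber functor $F \mapsto F_{[\rP]}$ underlying \eqref{eq:equivalence-homogeneous-bundles-P-reps} is monoidal, so $\cU^\lambda \otimes \cU^\mu$ corresponds to the $\rP$-representation $\rV_\rL^\lambda \otimes \rV_\rL^\mu$. Since each $\rV_\rL^\lambda$ is an irreducible representation of $\rP$, the unipotent radical $\rP_u$ acts trivially on it, hence trivially on the tensor product; therefore the tensor product is inflated from an $\rL$-representation, and the given decomposition $\rV_\rL^\lambda \otimes \rV_\rL^\mu = \bigoplus \rV_\rL^\nu$ into irreducible $\rL$-representations is at the same time its decomposition into irreducible $\rP$-representations. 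Applying the inverse equivalence then yields $\cU^\lambda \otimes \cU^\mu = \bigoplus \cU^\nu$.

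For part (1), I would use that the same equivalence intertwines the dual of a vector bundle with the dual of the corresponding representation, so $(\cU^\lambda)^\vee$ corresponds to $(\rV_\rL^\lambda)^\vee$. Again $\rP_u$ acts trivially, so this is an irreducible representation of $\rL$, and it remains only to identify its highest weight with respect to $\rB^+ \cap \rL$. This is the standard fact that, for a reductive group, the dual of the irreducible representation of highest weight $\lambda$ has highest weight $-w_0^\rL \lambda$ with $w_0^\rL$ the longest element of $\rW^\rL$: concretely, the lowest weight of $\rV_\rL^\lambda$ is $w_0^\rL \lambda$, so the highest weight of $(\rV_\rL^\lambda)^\vee$ is $-w_0^\rL \lambda$. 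Hence $(\rV_\rL^\lambda)^\vee \cong \rV_\rL^{-w_0^\rL \lambda}$, and the inverse equivalence gives $(\cU^\lambda)^\vee \cong \cU^{-w_0^\rL \lambda}$.

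The argument is essentially formal. The only points requiring care are the compatibility of the fiber functor of \eqref{eq:equivalence-homogeneous-bundles-P-reps} with tensor products and duals (which is subsumed in the assertion, already recorded above, that it is a \emph{monoidal} equivalence, so I would simply invoke it) and keeping the Borel conventions straight when computing the highest weight of a dual representation using the dominant-weight cones for $\rL$ fixed earlier. So the main — and quite minor — obstacle is this bookkeeping with conventions, rather than anything substantive.
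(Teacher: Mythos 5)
Your proof is correct and takes essentially the same approach as the paper, which simply cites the monoidal equivalence \eqref{eq:equivalence-homogeneous-bundles-P-reps} together with \cite[Formula (6)]{KuPo}; you are just spelling out the standard representation-theoretic argument that stands behind that citation (monoidality of the fiber functor, triviality of the unipotent radical's action on tensor products and duals of irreducible $\rP$-representations, and $(\rV_\rL^\lambda)^\vee \cong \rV_\rL^{-w_0^\rL\lambda}$).
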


\begin{proof}
  This is an immediate consequence of the equivalence \eqref{eq:equivalence-homogeneous-bundles-P-reps}
  and \cite[Formula (6)]{KuPo}.
\end{proof}

For equivariant vector bundles of the form $\cU^\lambda$ we also have a very efficient way
to compute their cohomology. Let $\ell \colon \rW \to \bZ$ be the length function and let
$\rho \in \bfP_{\rG}^+$ be the sum of fundamental weights of $\rG$.

\begin{theorem}[Borel--Weil--Bott]
Consider a vector bundle $\cU^\lambda$ with $\lambda \in \bfP_{\rL}^+$. If the weight $\lambda + \rho$ lies on a wall of a Weyl chamber for the $\rW$-action, then
\begin{equation*}
  H^\bullet(\rG/\rP, \cU^\lambda) = 0.
\end{equation*}
Otherwise, if $w \in \rW$ is the unique element such that the weight $w(\lambda + \rho)$ is dominant, then
\begin{equation*}
  H^\bullet(\rG/\rP, \cU^\lambda) = \rV_{\rG}^{w(\lambda + \rho) - \rho}[-\ell(w)].
\end{equation*}
\end{theorem}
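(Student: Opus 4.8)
The plan is to deduce this from the classical Borel--Weil--Bott theorem on the full flag variety $\rG/\rB$, using the projection $\pi \colon \rG/\rB \to \rG/\rP$ to pass between line bundles on $\rG/\rB$ and the bundles $\cU^\lambda$ on $\rG/\rP$. First I would record that the fibres of $\pi$ are isomorphic to $\rP/\rB \cong \rL/(\rB\cap\rL)$, the full flag variety of the Levi $\rL$, and that for $\lambda \in \bfP_\rL^+$ the $\rG$-equivariant line bundle $\mathcal L_\lambda$ on $\rG/\rB$ attached to the character $\lambda$ of $\rB$ restricts on each fibre to the line bundle associated with the $(\rB\cap\rL)$-dominant weight $\lambda$. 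Borel--Weil for the reductive group $\rL$ then says this fibrewise restriction has no higher cohomology and has $\rV_\rL^\lambda$ as its space of sections, so $R^{>0}\pi_*\mathcal L_\lambda = 0$ and $\pi_*\mathcal L_\lambda \cong \cU^\lambda$ by the equivalence \eqref{eq:equivalence-homogeneous-bundles-P-reps}. The Leray spectral sequence then collapses and gives $H^\bullet(\rG/\rP, \cU^\lambda) \cong H^\bullet(\rG/\rB, \mathcal L_\lambda)$, reducing everything to the case $\rP = \rB$.

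For $\rG/\rB$ I would argue by induction on the length of the Weyl group element that carries $\lambda+\rho$ into the dominant chamber. The base case $\lambda \in \bfP_\rG^+$ is Borel--Weil itself: $H^0(\rG/\rB, \mathcal L_\lambda) = \rV_\rG^\lambda$ with all higher cohomology zero, which I would get from the realisation of $\rV_\rG^\lambda$ inside the regular functions on $\rG$ together with a vanishing argument for $H^{>0}$. For the inductive step, if $\lambda+\rho$ is singular, say $\langle\lambda+\rho, \alpha^\vee\rangle = 0$ for a simple root $\alpha$, I would push forward along the $\bP^1$-bundle $p \colon \rG/\rB \to \rG/\rP_\alpha$ over the corresponding minimal parabolic and use that a degree $-1$ line bundle on $\bP^1$ is acyclic to conclude $Rp_*\mathcal L_\lambda = 0$, hence all cohomology vanishes. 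If $\lambda+\rho$ is regular but not dominant, I would choose a simple root $\alpha$ with $\langle\lambda+\rho,\alpha^\vee\rangle < 0$, set $\mu := s_\alpha(\lambda+\rho) - \rho$, and compute $Rp_*$ along the same $\bP^1$-fibration to obtain the degree shift $H^k(\rG/\rB, \mathcal L_\lambda) \cong H^{k-1}(\rG/\rB, \mathcal L_\mu)$ for all $k$; since multiplying by $s_\alpha$ lowers the relevant Weyl length by one, iterating lands in the dominant case and forces the cohomology of $\mathcal L_\lambda$ to be concentrated in degree $\ell(w)$ and equal to $\rV_\rG^{w(\lambda+\rho)-\rho}$, as claimed.

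Since this is entirely classical, in the paper I would in fact just cite Bott's original article (and a short modern proof, e.g.\ Demazure's, or a textbook account such as Weyman's book) and only spell out the reduction from $\rG/\rP$ to $\rG/\rB$ above, as that is the point that makes the statement directly applicable to the bundles $\cU^\lambda$ used here. The part I would be most careful about is the bookkeeping of conventions: since in \S\ref{subsection:generalities-on-equivarian-vector-bundles} the Borel $\rB$ is chosen so that its roots are the \emph{negative} ones, one has to check that ``dominant'' in the statement is meant with respect to $\rB^+$, that it is $\cU^\lambda$ and not its dual that appears, and that the $\rho$-shift is the one written; with the normalisations fixed there, this is exactly the formulation above.
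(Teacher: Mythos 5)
Your sketch is the standard (Demazure-style) proof of Borel--Weil--Bott --- reduction to $\rG/\rB$ via $R\pi_*$ of the line bundle $\mathcal L_\lambda$, then induction on $\ell(w)$ using the $\bP^1$-fibrations $\rG/\rB \to \rG/\rP_\alpha$ --- and it is correct, including the degree shift $H^k(\mathcal L_\lambda)\cong H^{k-1}(\mathcal L_\mu)$ and the caveat about the negative-Borel convention. The paper itself gives no proof: it states the theorem as classical background, which is exactly what you say you would do in practice, so there is nothing to compare beyond noting that your reduction from $\rG/\rP$ to $\rG/\rB$ is the standard way to make the classical statement apply directly to the bundles $\cU^\lambda$ with $\lambda\in\bfP_{\rL}^+$.
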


\subsection{Specifics of our case}

As in Section \ref{section:introduction} we let $\rG$ be a connected simple algebraic
group of Dynkin type $\rF_4$ and $\rP = \rP_1 \subset \rG$ be a maximal parabolic subgroup
associated with the first vertex of its Dynkin diagram \eqref{diagram:F4}.
We then consider the homogenous space
\begin{equation*}
  X = \rG/\rP,
\end{equation*}
which is the adjoint Grassmannian in type $\rF_4$.

The Levi subgroup $\rL$ of $\rP$ is a reductive group, whose semisimple part is
of Dynkin type~$\rC_3$. A more detailed description of $\rL$ is given below
in Section \ref{subsection:tensor-products-and-exterior-powers}.

We realize the weight lattice $\bfP_{\rG}$ of $\rG$ inside $\bQ^4$ as in \cite[p.211--213]{Bourbaki}.
Let $e_1, e_2, e_3, e_4$ be the standard basis of $\bQ^4$; it is orthonormal with respect to the usual
scalar product. The weight lattice $\bfP_{\rG}$ is generated inside $\bQ^4$ by the fundamental weights
$\omega_1, \dots, \omega_4$ or by the simple roots $\alpha_1, \dots, \alpha_4$, which are given by
\begin{equation}\label{eq:fundamental-weights-F4}
  \begin{aligned}
    & \omega_1 = e_1 + e_2 \\
    & \omega_2 = 2 e_1 + e_2 + e_3 \\
    & \omega_3 = \frac{1}{2}\left( 3 e_1 + e_2 + e_3 + e_4 \right) \\
    & \omega_4 = e_1
  \end{aligned}
  \hspace{70pt}
  \begin{aligned}
    & \alpha_1 = e_2 - e_3 \\
    & \alpha_2 = e_3 - e_4 \\
    & \alpha_3 = e_4 \\
    & \alpha_4 = \frac{1}{2}\left( e_1 - e_2 - e_3 - e_4 \right)
  \end{aligned}
\end{equation}
From \eqref{eq:fundamental-weights-F4} we see that a weight $\lambda = (\lambda_1, \lambda_2, \lambda_3, \lambda_4)$ is $\rG$-dominant if and only if
\begin{equation}\label{eq:dominant-cone-F4}
  \lambda_2 \geq \lambda_3 \geq \lambda_4 \geq 0 \quad \text{and} \quad \lambda_1 \geq \lambda_2 + \lambda_3 + \lambda_4 \, .
\end{equation}
The sum of the fundamental weights appearing in the Borel--Weil--Bott theorem is
\begin{equation}\label{eq:rho-F4}
  \rho = \frac{11}{2} e_1 + \frac{5}{2} e_2 + \frac{3}{2} e_3 + \frac{1}{2} e_4.
\end{equation}
All roots of $\rG$ are given by
\begin{equation}\label{eq:roots-F4}
  \pm e_i, \quad \pm e_i \pm e_j, \quad \frac{1}{2}(\pm e_1 \pm e_2 \pm e_3 \pm e_4)
\end{equation}
and, therefore, the walls of the Weyl chambers are given by the hyperplanes
\begin{equation}\label{eq:weyl-chambers-F4}
  \begin{aligned}
    & \lambda_i = 0, \quad 1 \leq i \leq 4, \\
    & \lambda_i = \pm \lambda_j , \quad 1 \leq i \neq j \leq 4, \\
    & \sum_{i \in I} \lambda_i = \sum_{j \in J} \lambda_j, \quad I \sqcup J = \{1,2,3,4\}.
  \end{aligned}
\end{equation}
Note that in the last condition $I$ and $J$ can be empty.

The longest element $w_0 \in \rW$ acts as
\begin{equation}\label{eq:longest-element-F4}
  w_0(\lambda_1, \lambda_2, \lambda_3, \lambda_4) = (-\lambda_1, -\lambda_2, -\lambda_3, -\lambda_4).
\end{equation}
Hence, we see that any representation of $\rG$ is self-dual. The longest element $w_0^{\rL} \in \rW^{\rL}$ acts by
\begin{equation}\label{eq:longest-element-levi}
  w_0^{\rL}(\lambda_1, \lambda_2, \lambda_3, \lambda_4) = (\lambda_2, \lambda_1, -\lambda_3, -\lambda_4).
\end{equation}

The ample generator of the Picard group is denoted by $\cO(1)$ and in terms of the equivalence~
\eqref{eq:equivalence-homogeneous-bundles-P-reps} we have
\begin{equation}\label{eq:line-bundle-twists}
  \cO(t) = \cU^{t\omega_1} \quad \text{and} \quad \cU^{\lambda + t \omega_1} = \cU^{\lambda}(t) \quad \text{for} \quad t \in \bZ.
\end{equation}

Now we can apply Lemma \ref{lemma:tensor-and-dual-general}(1) to compute the necessary duals.
\begin{corollary}\label{corollary:duals}
  We have
  \begin{equation*}
    \left( \cU^{a_2\omega_2 + a_3\omega_3 + a_4\omega_4} \right)^\vee =
    \cU^{a_2\omega_2 + a_3\omega_3 + a_4\omega_4}(- 3a_2 - 2a_3 - a_4).
  \end{equation*}
\end{corollary}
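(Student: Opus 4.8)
The plan is to deduce this directly from Lemma~\ref{lemma:tensor-and-dual-general}(1), which gives $(\cU^\lambda)^\vee \cong \cU^{-w_0^\rL\lambda}$, by computing the weight $-w_0^\rL\lambda$ for $\lambda = a_2\omega_2 + a_3\omega_3 + a_4\omega_4$ and re-expressing it in terms of the fundamental weights. First I would pass to the standard coordinates on $\bQ^4$: using \eqref{eq:fundamental-weights-F4} one gets
\begin{equation*}
  \lambda = \left( 2a_2 + \tfrac{3}{2}a_3 + a_4,\ a_2 + \tfrac{1}{2}a_3,\ a_2 + \tfrac{1}{2}a_3,\ \tfrac{1}{2}a_3 \right).
\end{equation*}

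Next I would apply the explicit action of $w_0^\rL$ given by \eqref{eq:longest-element-levi}, namely $(\lambda_1,\lambda_2,\lambda_3,\lambda_4) \mapsto (\lambda_2,\lambda_1,-\lambda_3,-\lambda_4)$, and negate, to obtain $-w_0^\rL\lambda$ in coordinates. The point is then that $-w_0^\rL\lambda$ and $\lambda$ have the same third and fourth coordinates, while their first two coordinates each drop by $3a_2 + 2a_3 + a_4$; a direct check gives
\begin{equation*}
  -w_0^\rL\lambda - \lambda = (-3a_2 - 2a_3 - a_4)(e_1 + e_2) = (-3a_2 - 2a_3 - a_4)\,\omega_1.
\end{equation*}
Since $\cU^{\mu + t\omega_1} = \cU^\mu(t)$ by \eqref{eq:line-bundle-twists}, this is exactly the asserted formula.

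There is no genuine obstacle here: the statement amounts to an elementary computation in $\bQ^4$, and the only thing to watch is that the half-integer contributions coming from $\omega_3$ cancel so that $-w_0^\rL\lambda - \lambda$ is an integral multiple of $\omega_1$. That this must happen is clear a priori, since the semisimple part of $\rL$ is of type $\rC_3$ and therefore has only self-dual representations; hence $\cU^\lambda$ and $(\cU^\lambda)^\vee$ correspond to $\rP$-representations with isomorphic restrictions to the semisimple part of $\rL$, so they can only differ by a twist by some power of $\cO(1) = \cU^{\omega_1}$, and the computation above merely identifies that power.
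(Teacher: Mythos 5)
Your proposal is correct and follows essentially the same route as the paper: apply Lemma~\ref{lemma:tensor-and-dual-general}(1), compute $-w_0^{\rL}\lambda$ in the coordinates of \eqref{eq:fundamental-weights-F4} via \eqref{eq:longest-element-levi}, identify the difference from $\lambda$ as $(-3a_2-2a_3-a_4)\omega_1$, and conclude with \eqref{eq:line-bundle-twists}. The closing remark about self-duality of type $\rC_3$ representations is a nice a priori sanity check but is not needed for the argument.
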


\begin{proof}
  Using \eqref{eq:longest-element-levi} and \eqref{eq:fundamental-weights-F4} we get
  \begin{multline*}
    -w_0^{\rL} (a_2\omega_2 + a_3\omega_3 + a_4\omega_4) = (-a_2 - \frac{1}{2}a_3, -2a_2 - \frac{3}{2}a_3 - a_4 , a_2 + \frac{1}{2}a_3, \frac{1}{2}a_3) = \\
    = (- 3a_2 - 2a_3 - a_4) \omega_1 + a_2\omega_2 + a_3\omega_3 + a_4\omega_4,
  \end{multline*}
  and the claim now follows from Lemma \ref{lemma:tensor-and-dual-general}(1) and \eqref{eq:line-bundle-twists}.
\end{proof}

\subsection{Tensor products and exterior powers}
\label{subsection:tensor-products-and-exterior-powers}

To compute tensor products of equivariant vector bundles on a homogeneous space
$\rG/\rP$ one can use Lemma \ref{lemma:tensor-and-dual-general}(2), which turns
it into the problem of computing tensor products of representations of the Levi
subgroup $\rL$. We explain here how to further reduce such computations to the known
results on tensor products for the semisimple part of $\rL$. Such results exist
in the literature in the form of tables (e.g.~\cite{VO}) and in the form of software
(e.g.~\cite{sagemath}). Essentially this is a consistent way to control the twist
by the center of $\rL$.

Let us temporarily go back into the general setting of Section \ref{subsection:generalities-on-equivarian-vector-bundles}. We denote by $\alpha_1, \dots, \alpha_n$ and $\omega_1, \dots, \omega_n$ the simple
roots and the fundamental weights of an arbitrary semisimple group $\rG$. We also make
use of the $\rW$-invariant scalar product on the ambient space of the root system of $\rG$, which we identify
with $\left( \bfP_{\rG} \right)_{\bQ}$; we denote it by $(\lambda, \mu)$. To simplify the notation, let us also
assume that the parabolic $\rP$ is maximal and corresponds to the $k$-th vertex of the Dynkin diagram of $\rG$.
Denote by $R(\rL)$ the roots of $\rL$, i.e. those roots of $\rG$, whose expression as a linear combination of
the simple roots $\alpha_1, \dots, \alpha_n$ does not involve the simple root $\alpha_k$. In other words,
$R(\rL)$ consists of those roots of $\rG$ that are orthogonal to $\omega_k$ with respect to the scalar product.

Consider two closed algebraic subgroups of $\rL$: the derived group $\rLprime \subset \rL$ and the center $Z(\rL) \subset \rL$. The derived group $\rLprime$ is a connected simply connected semisimple algebraic group, whose Dynkin diagram is obtained from the Dynkin diagram of $\rG$ by removing the $k$-th vertex. For the center we have an isomorphism $Z(\rL) \simeq \bG_m$. The group law in $\rL$ gives a surjective homomorphism of algebraic groups
\begin{equation}\label{eq:derived-subgroup-and-center}
  \rLprime \times Z(\rL) \to \rL
\end{equation}
with finite kernel $\rLprime \cap Z(\rL)$. The weight lattices of $\rLprime$ and
$Z(\rL)$ are the quotients of $\bfP_{\rL} = \bfP_{\rG}$
\begin{equation}\label{eq:maps-of-weight-lattices}
  \begin{aligned}
  \xymatrix{
    & \bfP_{\rL} \ar[dl]_{\varphi_{\rLprime}} \ar[dr]^{\varphi_Z} \\
    \bfP_{\rLprime} = \bfP_{\rL}/\left( \bZ\omega_k \right) & & \bfP_{Z(\rL)} = \bfP_{\rL}/\left(\sum_{\alpha \in R(\rL)} \bQ \alpha \cap \bfP_{\rL} \right)
  }
  \end{aligned}
\end{equation}
and the map
\begin{equation}\label{eq:embedding-of-weight-lattices}
  \bfP_{\rL} \xrightarrow{(\varphi_{\rLprime}, \varphi_{Z})} \bfP_{\rLprime} \oplus \bfP_{Z(\rL)}
\end{equation}
is an embedding as a finite index subgroup. Moreover, under $\varphi_{\rLprime}$ the fundamental
weights $\omega_1, \dots, \omega_n$ of $\rG$ get mapped to the fundamental weights
$\omega_1', \dots, \omega_{k-1}', \omega_{k+1}', \dots , \omega_n'$ of $\rLprime$
(with the labelling of simple roots inherited from $\rG$).
More precisely, we have
\begin{equation}\label{eq:maps-of-fundamental-weights-derived-subgroup}
  \varphi_{\rLprime}(\omega_i) =
  \begin{cases}
    \omega_i' & i \neq k, \\
    0 & i = k.
  \end{cases}
\end{equation}
A more detailed discussion of this setup can be found in \cite[II.1.18]{Jantzen}.

Over $\bQ$ we have an orthogonal direct sum decomposition
\begin{equation*}
  \left( \bfP_{\rL} \right)_{\bQ} \cong \left(\sum_{\alpha \in R(\rL)} \bQ \alpha \right) \oplus \left( \bQ \omega_k \right),
\end{equation*}
that allows us to identify
\begin{equation*}
  \left( \bfP_{\rLprime} \right)_{\bQ} \cong \sum_{\alpha \in R(\rL)} \bQ \alpha \qquad \text{and} \qquad \left( \bfP_{Z(\rL)} \right)_{\bQ} \cong \bQ \omega_k.
\end{equation*}
Thus, after tensoring with $\bQ$ the map \eqref{eq:embedding-of-weight-lattices} becomes an isomorphism
\begin{equation}\label{eq:decomposition-of-weights-map}
  \left( \bfP_{\rL} \right)_\bQ \xrightarrow{(\varphi_{\rLprime}, \varphi_{Z})} \left( \bfP_{\rLprime} \right)_\bQ \oplus \bQ \omega_k
\end{equation}
such that for a $\lambda = \sum_i a_i \omega_i \in \left( \bfP_{\rL} \right)_\bQ$ we have
\begin{equation}\label{eq:restriction-explicit-formula}
  \begin{aligned}
    & \varphi_{\rLprime}(\lambda) = \lambda' \, , \quad \text{where} \quad \lambda' = \sum_{i \neq k} a_i \omega_i', \\
    & \varphi_{Z}(\lambda) = r_{\lambda}\omega_k \, , \quad \text{where} \quad r_\lambda = \frac{(\lambda, \omega_k)}{(\omega_k, \omega_k)}.
  \end{aligned}
\end{equation}

Note that the map
\begin{equation*}
  \begin{aligned}
    & \varphi_{\rLprime} \colon \bfP_{\rL} \to \bfP_{\rLprime} \\
    & \qquad \lambda \hspace{5pt} \mapsto \hspace{5pt} \lambda'
  \end{aligned}
\end{equation*}
is completely determined by \eqref{eq:maps-of-fundamental-weights-derived-subgroup}, as $\omega_1, \dots, \omega_n$ form a basis of $\bfP_{\rL}$. We can also consider the map
\begin{equation}\label{eq:lifting-from-derived-subgroup}
  \begin{aligned}
    \bfP_{\rLprime} & \to \bfP_{\rL} \\
    \sum_{i \neq k} a_i \omega_i' & \mapsto \hspace{5pt} \sum_{i \neq k} a_i \omega_i
  \end{aligned}
\end{equation}
to which we will refer to as a \textsf{lifting} from $\bfP_{\rLprime}$ to $\bfP_{\rL}$.

Now we are ready to discuss tensor products. By \eqref{eq:derived-subgroup-and-center}
a representation of $\rL$ corresponds to a pair of compatible representations of $\rLprime$
and $Z(\rL)$. We are going to exploit this to compute tensor operations for
representations of $\rL$. For a highest weight irreducible representation $V_{\rL}^{\lambda}$ its restrictions $\Res^{\rL}_{\rLprime}(V_{\rL}^{\lambda})$ and $\Res^{\rL}_{Z(\rL)}(V_{\rL}^{\lambda})$ are again irreducible.
Hence, we have
\begin{equation*}
  \Res^{\rL}_{\rLprime}(V_{\rL}^{\lambda}) = V_{\rLprime}^{\lambda'},
\end{equation*}
where $\lambda'$ is the image of $\lambda$ in $\bfP_{\rLprime}$ under
\eqref{eq:maps-of-fundamental-weights-derived-subgroup}, and $\Res^{\rL}_{Z(\rL)}(V_{\rL}^{\lambda})$
is given by a character of $Z(\rL) \simeq \bG_m$.

\begin{lemma}\label{lemma:tensor-exterior-symmetric}
  Let $V_{\rL}^{\lambda}, \, V_{\rL}^{\mu}$ be irreducible representations of $\rL$
  with highest weights $\lambda, \mu \in \bfP_{\rL}$. Let $V_{\rLprime}^{\lambda'}, \, V_{\rLprime}^{\mu'}$
  be the induced irreducible representations of $\rLprime$ as above.

  \begin{enumerate}
    \item If
      \begin{equation*}\label{eq:tensor-product-derived-subgroup}
        V_{\rLprime}^{\lambda'} \otimes V_{\rLprime}^{\mu'} = \bigoplus_{\nu' \in \Sigma} \left( V_{\rLprime}^{\nu'} \right)^{m(\lambda', \mu' , \nu')}
      \end{equation*}
      then
      \begin{equation*}\label{eq:tensor-product-levi}
        V_{\rL}^{\lambda} \otimes V_{\rL}^{\mu} =
        \bigoplus_{\nu' \in \Sigma} \left( V_{\rL}^{\nu + (r_{\lambda} + r_{\mu} - r_{\nu})\omega_k} \right)^{m(\lambda', \mu' , \nu')}
      \end{equation*}
      where for a weight $\nu' \in \Sigma$ we denote by $\nu$ its lifting \eqref{eq:lifting-from-derived-subgroup}.

    \item If
    \begin{equation*}\label{eq:exterior-power-derived-subgroup}
      \Lambda^q \, V_{\rLprime}^{\lambda'} = \bigoplus_{\nu' \in \Sigma} \left( V_{\rLprime}^{\nu'} \right)^{m(\lambda', \nu')}
    \end{equation*}
    then
    \begin{equation*}\label{eq:exterior-power-levi}
      \Lambda^q \, V_{\rL}^{\lambda} =
      \bigoplus_{\nu' \in \Sigma} \left( V_{\rL}^{\nu + (qr_{\lambda} - r_{\nu})\omega_k} \right)^{m(\lambda', \nu')}
    \end{equation*}
    where for a weight $\nu' \in \Sigma$ we denote by $\nu$ its lifting \eqref{eq:lifting-from-derived-subgroup}.

    \item If
    \begin{equation*}\label{eq:symmetric-power-derived-subgroup}
      S^q \, V_{\rLprime}^{\lambda'} = \bigoplus_{\nu' \in \Sigma} \left( V_{\rLprime}^{\nu'} \right)^{m(\lambda', \nu')}
    \end{equation*}
    then
    \begin{equation*}\label{eq:symmetric-power-levi}
      S^q \, V_{\rL}^{\lambda} =
      \bigoplus_{\nu' \in \Sigma} \left( V_{\rL}^{\nu + (qr_{\lambda} - r_{\nu})\omega_k} \right)^{m(\lambda', \nu')}
    \end{equation*}
    where for a weight $\nu' \in \Sigma$ we denote by $\nu$ its lifting \eqref{eq:lifting-from-derived-subgroup}.
  \end{enumerate}
\end{lemma}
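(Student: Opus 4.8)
The plan is to pull all three statements back along the isogeny \eqref{eq:derived-subgroup-and-center} and reduce them to the elementary behaviour of tensor, exterior, and symmetric powers under twisting by a one-dimensional character. Write $\pi \colon \rLprime \times Z(\rL) \to \rL$ for the surjection of \eqref{eq:derived-subgroup-and-center} and $N = \rLprime \cap Z(\rL)$ for its finite central kernel, so that $\rL \cong (\rLprime \times Z(\rL))/N$. Then $\pi^* \colon \Rep(\rL) \to \Rep(\rLprime \times Z(\rL))$ is an exact, faithful, symmetric monoidal functor which identifies $\Rep(\rL)$ with the full subcategory of those representations on which $N$ acts trivially; this subcategory is closed under direct sums and subquotients, hence under $\Lambda^q$ and $S^q$. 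By the discussion preceding the lemma, $\pi^* V_{\rL}^{\lambda} \cong V_{\rLprime}^{\lambda'} \boxtimes \chi_{\lambda}$, where $\chi_{\lambda}$ denotes the character of $Z(\rL) \cong \bG_m$ given by $\varphi_Z(\lambda) = r_{\lambda}\omega_k \in \bfP_{Z(\rL)}$; in particular $\pi^*$ carries irreducibles to irreducibles and, since the map $(\varphi_{\rLprime}, \varphi_Z)$ of \eqref{eq:embedding-of-weight-lattices} is injective, non-isomorphic irreducibles to non-isomorphic ones. Consequently, the decomposition of a representation of $\rL$ into irreducibles can be read off from the decomposition of its $\pi^*$-pullback.

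First I would record the bookkeeping identity for the central twist. For $t \in \bQ$ and $\nu \in \bfP_{\rL}$ with $\nu + t\omega_k \in \bfP_{\rL}$ one has $\varphi_{\rLprime}(\nu + t\omega_k) = \varphi_{\rLprime}(\nu)$, because $\varphi_{\rLprime}(\omega_k) = 0$ by \eqref{eq:maps-of-fundamental-weights-derived-subgroup}, and $r_{\nu + t\omega_k} = r_{\nu} + t$, because $\lambda \mapsto r_{\lambda}$ is $\bQ$-linear with $r_{\omega_k} = 1$. Hence, if $\nu'$ is a weight of $\rLprime$ with lifting $\nu$ as in \eqref{eq:lifting-from-derived-subgroup}, then $\pi^* V_{\rL}^{\nu + (t - r_{\nu})\omega_k} \cong V_{\rLprime}^{\nu'} \boxtimes \chi$, where $\chi$ is the character $t\omega_k \in \bfP_{Z(\rL)}$; moreover $V_{\rL}^{\nu + (t - r_{\nu})\omega_k}$ is the unique irreducible representation of $\rL$ with these restrictions to $\rLprime$ and $Z(\rL)$. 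That $\nu + (t - r_{\nu})\omega_k$ indeed lies in $\bfP_{\rL}$ whenever $\nu'$ occurs in one of the decompositions below is then automatic, since that decomposition is the decomposition of a genuine $\rL$-representation.

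Next I would compute the three operations after pullback. For part (1), we have $\pi^*(V_{\rL}^{\lambda} \otimes V_{\rL}^{\mu}) = (V_{\rLprime}^{\lambda'} \otimes V_{\rLprime}^{\mu'}) \boxtimes (\chi_{\lambda} \otimes \chi_{\mu})$, and $\chi_{\lambda} \otimes \chi_{\mu}$ is the character $(r_{\lambda} + r_{\mu})\omega_k$; substituting the hypothesis on $V_{\rLprime}^{\lambda'} \otimes V_{\rLprime}^{\mu'}$ and applying the previous paragraph with $t = r_{\lambda} + r_{\mu}$ to each summand gives the asserted decomposition of $V_{\rL}^{\lambda} \otimes V_{\rL}^{\mu}$. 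For parts (2) and (3) one uses that for any representation $W$ and any one-dimensional representation $\chi$ there are natural isomorphisms $\Lambda^q(W \otimes \chi) \cong (\Lambda^q W) \otimes \chi^{\otimes q}$ and $S^q(W \otimes \chi) \cong (S^q W) \otimes \chi^{\otimes q}$, as one sees on a basis $v_i \otimes e$, where the degree-$q$ wedge, respectively symmetric, monomials manifestly transform through $\chi^{\otimes q}$. Applied to $\pi^* V_{\rL}^{\lambda} = V_{\rLprime}^{\lambda'} \boxtimes \chi_{\lambda}$ this gives $\pi^*(\Lambda^q V_{\rL}^{\lambda}) \cong (\Lambda^q V_{\rLprime}^{\lambda'}) \boxtimes \chi_{\lambda}^{\otimes q}$, and likewise for $S^q$, where $\chi_{\lambda}^{\otimes q}$ is the character $q r_{\lambda}\omega_k$; substituting the hypotheses and applying the bookkeeping identity with $t = q r_{\lambda}$ yields parts (2) and (3).

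No step here is genuinely hard; the one point that deserves care is the descent, i.e.\ the assertion that each irreducible $\rLprime \times Z(\rL)$-summand appearing on the right-hand side is $\pi^*$ of an irreducible $\rL$-representation and is labelled by the correct weight of $\rL$. This is handled once and for all by the first two paragraphs: $\pi^*$ is a fully faithful symmetric monoidal functor whose essential image is closed under $\otimes$, $\Lambda^q$, and $S^q$, so the output of each computation, being the pullback of a genuine $\rL$-representation, must itself be such a pullback, and the central-character identity $r_{\nu + t\omega_k} = r_{\nu} + t$ then pins down precisely which $\rL$-irreducible each summand comes from.
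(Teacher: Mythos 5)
Your proof is correct and follows essentially the same strategy as the paper: restrict to $\rLprime$ to obtain the shape of the decomposition, then use the fact that the central character $r_{(-)}\omega_k$ is additive under tensor products (and multiplies by $q$ under $\Lambda^q$ and $S^q$) to pin down the $\omega_k$-twist of each summand via $r_{\nu + t\omega_k} = r_{\nu} + t$. The only difference is presentational — you phrase the descent through the pullback along the isogeny $\rLprime \times Z(\rL) \to \rL$ and spell out parts (2) and (3), which the paper leaves as "proved in a very similar way".
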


\begin{proof}
  Let us only prove the first statement, as the others are proved in a very similar way.

  As discussed just before the statement of the lemma, to compute the tensor product
  $V_{\rL}^{\lambda} \otimes V_{\rL}^{\mu}$ it is enough to determine how $\rLprime$ and
  $Z(\rL)$ act on it. Restricting to $\rLprime$ we need to compute the tensor product
  $V_{\rLprime}^{\lambda'} \otimes V_{\rLprime}^{\mu'}$, which decomposes as
  \begin{equation*}
    V_{\rLprime}^{\lambda'} \otimes V_{\rLprime}^{\mu'} = \bigoplus_{\nu' \in \Sigma} \left( V_{\rLprime}^{\nu'} \right)^{m(\lambda', \mu' , \nu')}
  \end{equation*}
  by our assumptions. Therefore, $V_{\rL}^{\lambda} \otimes V_{\rL}^{\mu}$ has a similar
  direct sum decomposition
  \begin{equation*}
    V_{\rL}^{\lambda} \otimes V_{\rL}^{\mu} =
    \bigoplus_{\nu' \in \Sigma} \left( V_{\rL}^{\nu + t_{(\lambda, \mu, \nu)} \omega_k} \right)^{m(\lambda', \mu' , \nu')},
  \end{equation*}
  where for a $\nu' \in \Sigma$ we denote by $\nu$ its lifting \eqref{eq:lifting-from-derived-subgroup} and
  $t_{(\lambda, \mu, \nu)}$'s are some integers. The integers $t_{(\lambda, \mu, \nu)}$
  are determined by looking at the action of the center $Z(\rL)$. Since on each
  $V_{\rL}^{\nu + t_{(\lambda, \mu, \nu)} \omega_k}$ the torus $Z(\rL)$ acts by a
  character, and since characters simply add under the tensor product,
  we get the relation
  \begin{equation*}
    r_{\nu + t_{(\lambda, \mu, \nu)}\omega_k} = r_{\lambda} + r_{\mu},
  \end{equation*}
  which implies the desired $t_{(\lambda, \mu, \nu)} = r_{\lambda} + r_{\mu} - r_{\nu}$.
\end{proof}

\medskip

From now on we return back to the case of our adjoint variety in type $\rF_4$.
In Example \ref{example:tensor-product} we illustrate how to use Lemma \ref{lemma:tensor-exterior-symmetric}
in practice and then in Lemmas \ref{lemma:tensor-products-F4-P1} and \ref{lemma:exterior-powers-omega_4}
we collect all tensor products and exterior/symmetric powers required for this paper.

Using \eqref{eq:fundamental-weights-F4} and \eqref{eq:restriction-explicit-formula} we get
\begin{equation*}
  r_{\omega_1} = 1, \quad r_{\omega_2} = \frac{3}{2}, \quad  r_{\omega_3} = 1, \quad r_{\omega_4} = \frac{1}{2}.
\end{equation*}

\begin{example}
  \label{example:tensor-product}
  Let us compute $\cU^{\omega_4}(-1) \otimes \cU^{\omega_4}(-1)$ using
  Lemma \ref{lemma:tensor-exterior-symmetric}, i.e. we need to compute
  $V_{\rL}^{\omega_4 - \omega_1} \otimes V_{\rL}^{\omega_4 - \omega_1}$.
  In the notation of Lemma \ref{lemma:tensor-exterior-symmetric} we have
  \begin{equation*}
    \lambda = \mu = \omega_4 - \omega_1 \quad \text{and} \quad \lambda' = \mu' = \omega_4' \quad \text{and} \quad
    r_\lambda = r_\mu = -\frac{1}{2}.
  \end{equation*}
  Using the table in \cite[p.302]{VO} we get
  \begin{equation*}
    V_{\rLprime}^{\omega_4'} \otimes V_{\rLprime}^{\omega_4'} = \Bbbk \oplus V_{\rLprime}^{\omega_3'} \oplus V_{\rLprime}^{2\omega_4'}.
  \end{equation*}
  Thus, we have $\Sigma = \{ 0 \, , \omega_3' \, , 2\omega_4' \}$
  and all multiplicities $m(\lambda', \mu' , \nu')$ are equal to $1$.
  Therefore, applying Lemma \ref{lemma:tensor-exterior-symmetric}(1) we get
  \begin{equation*}
    V_{\rL}^{\omega_4 - \omega_1} \otimes V_{\rL}^{\omega_4 - \omega_1} = V_{\rL}^{-\omega_1} \oplus
    V_{\rL}^{\omega_3 - 2 \omega_1} \oplus V_{\rL}^{2\omega_4 - 2 \omega_1},
  \end{equation*}
  and, finally,
  \begin{equation*}
    \cU^{\omega_4}(-1) \otimes \cU^{\omega_4}(-1) = \cO(-1) \oplus \cU^{\omega_3}(-2) \oplus \cU^{2\omega_4}(-2).
  \end{equation*}
\end{example}

\begin{lemma}
  \label{lemma:tensor-products-F4-P1}
  We have
  \begin{equation*}
    \begin{aligned}
      \cU^{\omega_4} \otimes \cU^{\omega_4} & = \cU^{2\omega_4} \oplus \cU^{\omega_3} \oplus \cO(1) \\
      \cU^{\omega_4} \otimes \cU^{\omega_3} & = \cU^{\omega_3 + \omega_4} \oplus \cU^{\omega_2} \oplus \cU^{\omega_4}(1) \\
      \cU^{\omega_4} \otimes \cU^{\omega_2} & = \cU^{\omega_2 + \omega_4} \oplus \cU^{\omega_3}(1) \\
      \cU^{\omega_2} \otimes \cU^{\omega_3} & = \cU^{\omega_2 + \omega_3} \oplus \cU^{\omega_3 + \omega_4}(1) \oplus \cU^{\omega_4}(2) \\
      \cU^{\omega_2} \otimes \cU^{\omega_2} & = \cU^{2\omega_2} \oplus \cU^{2\omega_3}(1) \oplus \cU^{2\omega_4}(2) \oplus \cO(3) \\
      \cU^{2\omega_4} \otimes \cU^{\omega_4} & = \cU^{3\omega_4} \oplus \cU^{\omega_3 + \omega_4} \oplus \cU^{\omega_4}(1) \\
      \cU^{2\omega_4} \otimes \cU^{\omega_3} & = \cU^{\omega_3 + 2\omega_4} \oplus \cU^{\omega_2 + \omega_4} \oplus \cU^{2\omega_4}(1) \oplus \cU^{\omega_3}(1) \\
      \cU^{2\omega_4} \otimes \cU^{\omega_2} & = \cU^{\omega_2 + 2\omega_4} \oplus \cU^{\omega_3 + \omega_4}(1) \oplus \cU^{\omega_2}(1)
    \end{aligned}
  \end{equation*}
\end{lemma}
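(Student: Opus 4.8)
The plan is to reduce each identity, via Lemma~\ref{lemma:tensor-and-dual-general}(2), to a tensor product of representations of the Levi subgroup $\rL$, and then, via Lemma~\ref{lemma:tensor-exterior-symmetric}(1), to a tensor product of representations of its derived group $\rLprime$, which here is the simply connected group of type $\rC_3$. Under the restriction to $\rLprime$ the bundles $\cU^{\omega_4}$, $\cU^{\omega_3}$, $\cU^{\omega_2}$, $\cU^{2\omega_4}$ go to the irreducible $\rC_3$-representations with highest weights $\omega_4'$, $\omega_3'$, $\omega_2'$, $2\omega_4'$ (in the labelling of the $\rC_3$ vertices inherited from $\rF_4$, so that $\cU^{\omega_4}$ restricts to the $6$-dimensional standard representation of $\Sp_6$, consistently with $\rk\cU^{\omega_4}=6$). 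The required $\rC_3$-decompositions are all multiplicity-free in this range and can be read off from the tables in~\cite{VO} or from~\cite{sagemath}, exactly as in Example~\ref{example:tensor-product}; for instance $V^{\omega_4'}\otimes V^{\omega_4'}=\Bbbk\oplus V^{\omega_3'}\oplus V^{2\omega_4'}$ and $V^{\omega_4'}\otimes V^{\omega_3'}=V^{\omega_3'+\omega_4'}\oplus V^{\omega_2'}\oplus V^{\omega_4'}$, and analogously for the remaining six products.

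To finish, for each summand $V_{\rLprime}^{\nu'}$ one forms the lift $\nu$ from~\eqref{eq:lifting-from-derived-subgroup} and computes the twist $t=r_\lambda+r_\mu-r_\nu$ prescribed by Lemma~\ref{lemma:tensor-exterior-symmetric}(1), using the values $r_{\omega_1}=1$, $r_{\omega_2}=\tfrac32$, $r_{\omega_3}=1$, $r_{\omega_4}=\tfrac12$ and additivity of $r_\bullet$; then $\cU^{\nu+t\omega_1}=\cU^\nu(t)$ by~\eqref{eq:line-bundle-twists}. For example, in $\cU^{\omega_2}\otimes\cU^{\omega_2}$ one has $r_\lambda+r_\mu=3$ and the four $\rC_3$-summands lift to the $\rF_4$-weights $0,\,2\omega_2,\,2\omega_3,\,2\omega_4$ with $r$-values $0,3,2,1$, hence twists $3,0,1,2$, which reproduces the line $\cO(3)\oplus\cU^{2\omega_2}\oplus\cU^{2\omega_3}(1)\oplus\cU^{2\omega_4}(2)$; the other seven lines are obtained the same way.

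There is no conceptual difficulty here; the computation is mechanical. The two points that need care are: (i) keeping track of the identification of the $\rC_3$ Dynkin vertices with the vertices $2,3,4$ of the $\rF_4$ diagram --- the long simple root $\alpha_2$ of $\rF_4$ sits at the end of the $\rC_3$ diagram opposite to where it sits in the Bourbaki convention for $\rC_3$, so the fundamental weights get relabelled accordingly when one consults the tables --- and (ii) the half-integral values of $r_\bullet$, for which the requirement that every final twist come out an integer serves as a convenient built-in consistency check. One could instead verify each identity directly in $\Rep(\rL)$ with a computer algebra system, but passing through $\rLprime$ keeps the computation short enough to carry out by hand.
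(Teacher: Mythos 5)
Your proposal is correct and follows exactly the same route as the paper: the paper's proof consists of the single sentence that this is a routine calculation using Lemma~\ref{lemma:tensor-exterior-symmetric}, the tables in \cite{VO}, and the template of Example~\ref{example:tensor-product}, which is precisely what you carry out. Your remarks on the relabelling of the $\rC_3$ vertices and on integrality of the resulting twists as a sanity check are sensible and consistent with the conventions set up in \S\ref{subsection:tensor-products-and-exterior-powers}.
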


\begin{proof}
  The proof is a routine calculation similar to Example \ref{example:tensor-product} using \cite[p.302]{VO} and Lemma~\ref{lemma:tensor-exterior-symmetric}.
\end{proof}

\begin{lemma}\label{lemma:exterior-powers-omega_4}
  We have
  \begin{equation*}
    S^k \cU^{\omega_4} = \cU^{k\omega_4} \quad \forall k \qquad \text{and} \qquad
    \Lambda^k \cU^{\omega_4} =
    \begin{cases}
      \cO & k = 0 \\
      \cU^{\omega_4} & k = 1 \\
      \cU^{\omega_3} \oplus \cO(1) & k = 2 \\
      \cU^{\omega_2} \oplus \cU^{\omega_4}(1) & k = 3 \\
      \cU^{\omega_3}(1) \oplus \cO(2) & k = 4 \\
      \cU^{\omega_4}(2) & k = 5 \\
      \cO(3) & k = 6
    \end{cases}
  \end{equation*}
  \begin{equation*}
    \Lambda^2 \cU^{\omega_3} = \cU^{\omega_2 + \omega_4} \oplus \cU^{2\omega_4}(1)
  \end{equation*}
  \begin{equation*}
    \Lambda^3 \cU^{\omega_3} = \cU^{2\omega_2} \oplus \cU^{\omega_3 + 2\omega_4}(1) \oplus \cU^{\omega_2 + \omega_4}(1) \oplus \cU^{2\omega_4}(2)
  \end{equation*}
\end{lemma}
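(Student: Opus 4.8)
The plan is to push every computation down to the semisimple part $\rLprime$ of the Levi subgroup, which is of type $\rC_3$, hence isomorphic to $\Sp_6$, and then apply Lemma~\ref{lemma:tensor-exterior-symmetric}(2)--(3). The first thing to fix is the dictionary between the fundamental weights $\omega_2',\omega_3',\omega_4'$ of $\rLprime$ (labelled as inherited from $\rF_4$) and the standard Bourbaki fundamental weights $\omega_1^{\rC_3},\omega_2^{\rC_3},\omega_3^{\rC_3}$ of $\Sp_6$: deleting the first node of \eqref{diagram:F4} leaves a $\rC_3$ diagram on the nodes $2,3,4$ in which node $4$ is the endpoint joined by a simple bond, so matching it against the standard $\rC_3$ diagram forces $\omega_4'\mapsto\omega_1^{\rC_3}$, $\omega_3'\mapsto\omega_2^{\rC_3}$, $\omega_2'\mapsto\omega_3^{\rC_3}$. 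In particular, writing $W$ for the $6$-dimensional standard representation of $\Sp_6$, the bundle $\cU^{\omega_4}$ corresponds, after restriction to $\rLprime$, to $W=V_{\rLprime}^{\omega_4'}$, and $\cU^{\omega_3}$ to the $14$-dimensional representation $V_{\rLprime}^{\omega_3'}=\Lambda^2_0 W$. All central twists are then read off from Lemma~\ref{lemma:tensor-exterior-symmetric} using the values $r_{\omega_1}=1$, $r_{\omega_2}=\tfrac32$, $r_{\omega_3}=1$, $r_{\omega_4}=\tfrac12$ recorded just above.

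For the powers of $\cU^{\omega_4}$ I would invoke two classical facts about the standard representation of a symplectic group. First, $S^k W=V_{k\omega_1^{\rC_3}}$ is irreducible for every $k$; since the twist $k\,r_{\omega_4}-r_{k\omega_4}=\tfrac{k}{2}-\tfrac{k}{2}$ vanishes, Lemma~\ref{lemma:tensor-exterior-symmetric}(3) gives $S^k\cU^{\omega_4}=\cU^{k\omega_4}$. Second, contraction with the symplectic form yields $\Lambda^k W\cong\bigoplus_{j\ge 0}V_{\omega_{k-2j}^{\rC_3}}$ for $k\le 3$ (with the convention $V_{\omega_0^{\rC_3}}:=\Bbbk$), while for $k>3$ the isomorphism $\Lambda^k W\cong\Lambda^{6-k}W$ (from $W\cong W^\vee$ together with $\det W=\Bbbk$) reduces to that range; explicitly $\Lambda^0 W=\Bbbk$, $\Lambda^1 W=V_{\omega_1^{\rC_3}}$, $\Lambda^2 W=V_{\omega_2^{\rC_3}}\oplus\Bbbk$, $\Lambda^3 W=V_{\omega_3^{\rC_3}}\oplus V_{\omega_1^{\rC_3}}$, $\Lambda^4 W\cong\Lambda^2 W$, $\Lambda^5 W\cong\Lambda^1 W$, $\Lambda^6 W\cong\Bbbk$. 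Feeding each summand through Lemma~\ref{lemma:tensor-exterior-symmetric}(2), lifting the resulting $\rLprime$-weight back to $\bfP_{\rL}$ via \eqref{eq:lifting-from-derived-subgroup}, and computing the twist $k\,r_{\omega_4}-r_\nu$ produces exactly the stated list for $\Lambda^k\cU^{\omega_4}$.

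For $\Lambda^2\cU^{\omega_3}$ and $\Lambda^3\cU^{\omega_3}$ one needs the exterior powers of the $14$-dimensional representation $V_{\omega_2^{\rC_3}}=\Lambda^2_0 W$ of $\Sp_6$, which are no longer classical identities; here I would consult the tables of \cite{VO} or run \cite{sagemath}, obtaining $\Lambda^2 V_{\omega_2^{\rC_3}}=V_{\omega_1^{\rC_3}+\omega_3^{\rC_3}}\oplus V_{2\omega_1^{\rC_3}}$ and $\Lambda^3 V_{\omega_2^{\rC_3}}=V_{2\omega_3^{\rC_3}}\oplus V_{2\omega_1^{\rC_3}+\omega_2^{\rC_3}}\oplus V_{\omega_1^{\rC_3}+\omega_3^{\rC_3}}\oplus V_{2\omega_1^{\rC_3}}$, with the dimension checks $70+21=91=\binom{14}{2}$ and $84+189+70+21=364=\binom{14}{3}$ as a sanity control. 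Translating the summands through the weight dictionary and applying Lemma~\ref{lemma:tensor-exterior-symmetric}(2) with $\lambda=\omega_3$, $r_\lambda=1$ (so that the twist on a summand $\nu$ is $2-r_\nu$ for $\Lambda^2$ and $3-r_\nu$ for $\Lambda^3$) yields the two displayed formulas.

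The only real work — and the place where a slip is most likely — is the bookkeeping: keeping the three labelings (fundamental weights of $\rF_4$, of $\rLprime$, and of $\Sp_6$ in Bourbaki numbering) straight, lifting $\rLprime$-weights back to $\bfP_{\rL}$ correctly, and getting each central twist right. The representation-theoretic inputs themselves are either classical (powers of the standard symplectic representation) or a routine table lookup ($\Lambda^2$, $\Lambda^3$ of the $14$-dimensional $\Sp_6$-module), and the dimension counts above make the bookkeeping self-checking.
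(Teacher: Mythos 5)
Your proposal is correct and follows essentially the same route as the paper: the paper's proof is a one-liner asserting a routine calculation using the table in \cite[p.302]{VO} and Lemma~\ref{lemma:tensor-exterior-symmetric}, which is exactly the reduction to $\rLprime \cong \Sp_6$ followed by re-attachment of the central twist that you spell out. Your dictionary $\omega_4'\mapsto\omega_1^{\rC_3}$, $\omega_3'\mapsto\omega_2^{\rC_3}$, $\omega_2'\mapsto\omega_3^{\rC_3}$, the classical decompositions of $S^k W$ and $\Lambda^k W$ for the standard symplectic module $W$, the tabulated $\Lambda^2$ and $\Lambda^3$ of the $14$-dimensional module (with the dimension checks $70+21=91$ and $84+189+70+21=364$), and the resulting twists all agree with what the paper's calculation yields.
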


\begin{proof}
  The proof is a routine calculation similar to Example \ref{example:tensor-product} using \cite[p.302]{VO} and Lemma~\ref{lemma:tensor-exterior-symmetric}.
\end{proof}

\subsection{Cohomology computations}

\begin{lemma}
  We have
  \begin{equation}\label{eq:cohomology-structure-sheaf}
    H^\bullet (X, \cO) = \Bbbk[0],
  \end{equation}

  \begin{equation}\label{eq:cohomology-omega_2(-2)}
    H^\bullet(X, \cU^{\omega_2}(-2)) = \Bbbk[-1],
  \end{equation}

  \begin{equation}\label{eq:cohomology-2omega_2(-3)}
    H^\bullet(X, \cU^{2\omega_2}(-3)) = V^{\omega_1}[-1].
  \end{equation}

\end{lemma}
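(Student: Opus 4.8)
The plan is to compute each of the three cohomology groups directly using the Borel--Weil--Bott theorem as recalled above, together with the explicit combinatorial data for type $\rF_4$ collected in \eqref{eq:fundamental-weights-F4}--\eqref{eq:longest-element-F4}. In each case one forms the weight $\lambda + \rho$, checks whether it lies on one of the walls \eqref{eq:weyl-chambers-F4}, and if not, one moves it into the dominant chamber \eqref{eq:dominant-cone-F4} by a sequence of simple reflections, recording the length of the Weyl group element used.

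First, for $\cO = \cU^0$ the relevant weight is simply $\rho = \left( \tfrac{11}{2}, \tfrac{5}{2}, \tfrac{3}{2}, \tfrac{1}{2} \right)$, which is already strictly dominant by \eqref{eq:dominant-cone-F4}; hence $w = e$, $\ell(w) = 0$, and $H^\bullet(X, \cO) = \rV_\rG^0[0] = \Bbbk[0]$, giving \eqref{eq:cohomology-structure-sheaf}. Second, I would express $\cU^{\omega_2}(-2) = \cU^{\omega_2 - 2\omega_1}$ using \eqref{eq:line-bundle-twists}, convert $\omega_2 - 2\omega_1$ to $e_i$-coordinates via \eqref{eq:fundamental-weights-F4}, add $\rho$, verify the result is regular, and find the unique $w \in \rW$ with $w(\lambda+\rho)$ dominant; the expected outcome is that $w(\lambda+\rho) - \rho = 0$ and $\ell(w) = 1$, yielding $H^\bullet = \Bbbk[-1]$, i.e. \eqref{eq:cohomology-omega_2(-2)}. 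Third, and most involved, I would do the same for $\cU^{2\omega_2}(-3) = \cU^{2\omega_2 - 3\omega_1}$: pass to $e_i$-coordinates, add $\rho$, check regularity against \eqref{eq:weyl-chambers-F4}, and then dominantize; here the expected answer is $w(\lambda + \rho) - \rho = \omega_1 = e_1 + e_2$ with $\ell(w) = 1$, so that $H^\bullet(X, \cU^{2\omega_2}(-3)) = \rV_\rG^{\omega_1}[-1] = V^{\omega_1}[-1]$, which is \eqref{eq:cohomology-2omega_2(-3)}. (Note $V^{\omega_1} = \mathfrak{g}$ is the adjoint representation, consistent with $X \subset \bP(\mathfrak{g})$.)

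The main obstacle, such as it is, is purely bookkeeping: the half-integer coordinates coming from $\rho$ and from $\omega_3, \omega_4$ make the reflection computation error-prone, and one must be careful that the element $w$ realizing the dominantization genuinely lies in $\rW(\rF_4)$ and that its length is computed correctly (e.g. by writing it as a reduced word in the $s_{\alpha_i}$, or by counting the positive roots it sends to negative ones among \eqref{eq:roots-F4}). A useful internal consistency check is that in the latter two cases the shifted weight $w(\lambda+\rho)-\rho$ must be $\rG$-dominant and the cohomological degree must match the codimension-type heuristics; in particular the self-duality \eqref{eq:longest-element-F4} forces $V^{\omega_1}$ to be self-dual, as it must be. Since all three weights are small, a single pass of simple reflections suffices in each case, and I would simply present the resulting chain $\lambda + \rho \rightsquigarrow \cdots \rightsquigarrow w(\lambda+\rho)$ for each of the three bundles.
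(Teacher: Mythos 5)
Your proposal is correct and takes essentially the same route as the paper: for the second and third identities, both reduce to a one-step Borel--Weil--Bott calculation with $w = s_{\alpha_1}$ sending $\lambda + \rho$ to $\rho$ and $\omega_1 + \rho$ respectively (so $\ell(w)=1$). The only minor divergence is that for $H^\bullet(X,\cO)$ the paper invokes Kodaira vanishing on a connected Fano, whereas you apply Borel--Weil--Bott directly to $\lambda = 0$; both are valid and give the same answer.
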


\begin{proof}

The equality \eqref{eq:cohomology-structure-sheaf} follows from the fact that
$X$ is a smooth connected Fano variety together with the Kodaira vanishing theorem.

To prove \eqref{eq:cohomology-omega_2(-2)} and \eqref{eq:cohomology-2omega_2(-3)}
we apply the Borel--Weil--Bott theorem. Indeed, in the first case we have
$\lambda = \omega_2 - 2 \omega_1$ and it is easy to see using \eqref{eq:fundamental-weights-F4}--\eqref{eq:rho-F4}
that we have $s_{\alpha_1}(\lambda + \rho) = \rho$, hence \eqref{eq:cohomology-omega_2(-2)} holds. Similarly,
in the second case we have $\lambda = 2\omega_2 - 3 \omega_1$ and
$s_{\alpha_1}(\lambda + \rho) = \omega_1 + \rho$, and so \eqref{eq:cohomology-2omega_2(-3)} follows.
\end{proof}

By the Borel--Weil--Bott theorem a vector bundle $\cU^{\lambda}$ is acyclic if and only if the weight $\lambda + \rho$ lies on a wall of a Weyl chamber for the $\rW$-action; such weights are called \textsf{singular}. A routine computation using formulas \eqref{eq:weyl-chambers-F4} for the walls of the Weyl chambers gives the following.

\begin{corollary}\label{corollary:acyclicities}
  The following vector bundles are acyclic
  \begin{equation*}
    \begin{aligned}
      \cO(t) \quad & \text{for} \quad t \in [-7,-1] \\
      \cU^{\omega_2}(t) \quad & \text{for} \quad t \in [-10,-1] \setminus \{-9, -2\} \\
      \cU^{\omega_3}(t) \quad & \text{for} \quad t \in [-9,-1] \\
      \cU^{\omega_4}(t) \quad & \text{for} \quad t \in [-8,-1] \\
      \cU^{2\omega_2}(t) \quad & \text{for} \quad t \in [-10,-1] \setminus \{-3, -2\} \\
      \cU^{2\omega_3}(t) \quad & \text{for} \quad t \in [-11,-1] \setminus \{-9, -3\} \\
      \cU^{2\omega_4}(t) \quad & \text{for} \quad t \in [-9,-1] \\
      \cU^{\omega_2 + \omega_4}(t) \quad & \text{for} \quad t \in [-11,-1] \setminus \{-10, -2\} \\
      \cU^{\omega_3 + \omega_4}(t) \quad & \text{for} \quad t \in [-10,-1] \\
      \cU^{\omega_2 + 2\omega_4}(t) \quad & \text{for} \quad t \in [-12,-1] \setminus \{-11, -2\} \\
      \cU^{3\omega_4}(t) \quad & \text{for} \quad t \in [-10,-1]
    \end{aligned}
  \end{equation*}
\end{corollary}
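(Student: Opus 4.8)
The plan is to apply the Borel--Weil--Bott theorem in the form recalled above, using the criterion stated immediately before the corollary: the bundle $\cU^\lambda$ is acyclic precisely when $\lambda+\rho$ is singular, i.e. lies on one of the walls \eqref{eq:weyl-chambers-F4}. So for each weight $\lambda$ occurring in the statement --- each of the shape $a_2\omega_2+a_3\omega_3+a_4\omega_4+t\omega_1$, with $\cO(t)$ being the case $a_2=a_3=a_4=0$ --- I would write $\lambda+\rho$ in the orthonormal basis $e_1,\dots,e_4$ by means of \eqref{eq:fundamental-weights-F4} and \eqref{eq:rho-F4}, and then test it against the finitely many wall equations in \eqref{eq:weyl-chambers-F4}, recording the set of integers $t$ for which at least one of them holds.

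For example, for $\cU^{\omega_3}(t)$ one gets $\lambda+\rho=(7+t,\,3+t,\,2,\,1)$, and a short inspection shows that this satisfies one of the equations $\lambda_i=0$ or $\lambda_i=\pm\lambda_j$ exactly for $t\in[-9,-1]$, while none of the subset-sum walls $\sum_{i\in I}\lambda_i=\sum_{j\in J}\lambda_j$ can hold for integral $t$; this reproduces the third line of the list. The only points that call for care are that $\omega_3$ and $\rho$ have half-integral coordinates, so for the bundles involving $\omega_3$ --- and for $\cO(t)$ --- it is cleanest to scale all four coordinates by $2$ before testing the walls (scaling affects none of the equations), and that one must remember to include all of the subset-sum walls, among them the degenerate one $\lambda_1+\lambda_2+\lambda_3+\lambda_4=0$ coming from $I=\{1,2,3,4\}$, $J=\emptyset$.

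Repeating this for each of the eleven families in the statement is completely mechanical: eleven weights, each to be checked over an interval of roughly ten consecutive integers against the two dozen walls of the $\rF_4$ Weyl chambers. That bookkeeping is really the only content of the argument, and it is also the only place where anything can go wrong --- there is no conceptual difficulty --- so it is worth cross-checking the resulting list against a \texttt{SageMath} computation of the relevant cohomology. I would also note in passing (although the corollary does not require it) that for the values of $t$ excluded by the sets ``$\setminus\{\dots\}$'' the weight $\lambda+\rho$ turns out to be regular, so those bundles genuinely have non-zero cohomology --- consistently with, e.g., \eqref{eq:cohomology-omega_2(-2)} and \eqref{eq:cohomology-2omega_2(-3)} --- and the stated ranges are optimal.
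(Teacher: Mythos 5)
Your proposal is correct and follows exactly the route the paper takes: the paper's proof of this corollary is precisely the one-line appeal to the Borel--Weil--Bott singularity criterion followed by the ``routine computation'' against the wall equations \eqref{eq:weyl-chambers-F4}, which you spell out and correctly illustrate for $\cU^{\omega_3}(t)$, where $\lambda+\rho=(7+t,3+t,2,1)$ indeed hits a wall exactly for $t\in[-9,-1]$. The remaining ten families are the same finite check, so there is nothing to add.
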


\begin{remark}\label{remark:acyclicity-twist-by-minus-one}
  Note that for any $\rL$-dominant weight
  \begin{equation*}
    \lambda = \sum_i a_i \omega_i \quad \text{with} \quad a_1 = -1
  \end{equation*}
  the vector bundle $\cU^\lambda$ is always acyclic. Indeed, this follows from the Borel--Weil--Bott theorem, as $\lambda + \rho$ is fixed by the simple reflection $s_{\alpha_1}$ and is, therefore, a singular weight.
\end{remark}

\section{Construction of the exceptional collection}
\label{section:construction}

Recall that $X$ is the adjoint Grassmannian of type $\rF_4$.
In this section we show that the sequence of vector bundles
\begin{equation}\label{eq:collection-F4-P1}
  \cO, \cU^{\omega_4}, \widetilde{T}_X, \cO(1), \cU^{\omega_4}(1), \widetilde{T}_X(1), \dots,
  \cO(7), \cU^{\omega_4}(7), \widetilde{T}_X(7)
\end{equation}
(where $\widetilde{T}_X$ is an extension of the tangent bundle $T_X$ defined below in \eqref{eq:exact-sequence-extension-tangent-bundle}) is an exceptional collection on $X$. As already mentioned in the Introduction, this collection is a rectangular Lefschetz exceptional collection.

Let us first consider the subcollection of \eqref{eq:collection-F4-P1} generated
by $\cO$ and $\cU^{\omega_4}$.

\begin{lemma}\label{lemma:collection-O-and-U}
  The collection of vector bundles
  \begin{equation*}
    \cO, \cU^{\omega_4}, \cO(1), \cU^{\omega_4}(1), \dots, \cO(7), \cU^{\omega_4}(7)
    \end{equation*}
  is exceptional.
\end{lemma}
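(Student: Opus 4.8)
The plan is to verify the defining conditions of an exceptional collection directly via Borel--Weil--Bott, reducing everything to the acyclicity statements already collected in Corollary~\ref{corollary:acyclicities}. Recall that a sequence $E_1, \dots, E_N$ is exceptional if each $E_i$ is exceptional and $\Ext^\bullet(E_j, E_i) = 0$ for $j > i$. Since all the bundles in question are twists of $\cO$ and $\cU^{\omega_4}$ by powers of $\cO(1)$, and since $\Ext^\bullet(\cF(a), \cG(b)) = H^\bullet(X, \cF^\vee \otimes \cG(b-a))$, the whole statement amounts to computing $H^\bullet(X, \cU^{\omega_4 \vee} \otimes \cU^{\omega_4}(t))$, $H^\bullet(X, \cU^{\omega_4}(t))$, $H^\bullet(X, \cU^{\omega_4 \vee}(t))$, and $H^\bullet(X, \cO(t))$ for the relevant ranges of $t$.

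First I would record the duals using Corollary~\ref{corollary:duals}: we have $(\cU^{\omega_4})^\vee = \cU^{\omega_4}(-1)$, since $a_4 = 1$ gives the twist $-1$. Hence $\cU^{\omega_4 \vee} \otimes \cU^{\omega_4}(t) = \cU^{\omega_4} \otimes \cU^{\omega_4}(t-1)$, and by Lemma~\ref{lemma:tensor-products-F4-P1} this equals $\cU^{2\omega_4}(t-1) \oplus \cU^{\omega_3}(t-1) \oplus \cO(t)$. Next I would split the verification into three parts:
\begin{enumerate}
  \item Exceptionality of each $\cO(i)$ is the statement $H^\bullet(X, \cO) = \Bbbk[0]$, which is \eqref{eq:cohomology-structure-sheaf}.
  \item Exceptionality of each $\cU^{\omega_4}(i)$ means $H^\bullet(X, \cU^{\omega_4} \otimes \cU^{\omega_4}(-1)) = \Bbbk[0]$; by the tensor decomposition above with $t=0$ this is $H^\bullet(\cU^{2\omega_4}(-1)) \oplus H^\bullet(\cU^{\omega_3}(-1)) \oplus H^\bullet(\cO)$, and the first two summands are acyclic by Corollary~\ref{corollary:acyclicities} while the last contributes $\Bbbk[0]$.
  \item Semiorthogonality: for $j > i$ one must kill $\Ext^\bullet$ between consecutive-or-further blocks. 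Writing $d = i - j \in \{-1, -2, \dots, -7\}$ (and also $d=0$ inside a block for the off-diagonal pair $(\cU^{\omega_4}(i), \cO(i))$), the required vanishings are $H^\bullet(\cO(d)) = 0$ for $d \in [-7,-1]$; $H^\bullet(\cU^{\omega_4}(d)) = 0$ for $d \in [-8,-1]$ (killing $\Ext^\bullet(\cU^{\omega_4}(i), \cO(j))$ using $(\cU^{\omega_4})^\vee = \cU^{\omega_4}(-1)$, so the twist is $d$... actually $H^\bullet(\cU^{\omega_4 \vee}(d)) = H^\bullet(\cU^{\omega_4}(d-1))$, needed for $d \in [-7,0]$, i.e. twist in $[-8,-1]$, which is covered) and $H^\bullet(\cU^{\omega_4}(d)) = 0$ for $d \in [-7,0] \cup \{\text{within-block } d = 0 \text{ excluded}\}$ for the pair $(\cO(i), \cU^{\omega_4}(j))$; and finally $H^\bullet(\cU^{2\omega_4}(d-1) \oplus \cU^{\omega_3}(d-1) \oplus \cO(d)) = 0$ for $d \in [-7,-1]$, which follows termwise: $\cU^{2\omega_4}(d-1)$ with $d-1 \in [-8,-2]$ is acyclic, $\cU^{\omega_3}(d-1)$ with $d-1 \in [-8,-2]$ is acyclic, and $\cO(d)$ with $d \in [-7,-1]$ is acyclic.
\end{enumerate}

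The only genuine bookkeeping is to match each needed twist against the intervals in Corollary~\ref{corollary:acyclicities} and confirm none of the excluded values (e.g. $-9$ or $-2$ for $\cU^{2\omega_2}$, which does not even occur here) is hit. I expect the main obstacle to be purely organizational: keeping straight the sign conventions in passing between $\Ext$ and cohomology, and making sure the within-block pair $\Ext^\bullet(\cU^{\omega_4}(i), \cO(i)) = H^\bullet(X, \cU^{\omega_4}(-1)) = 0$ is handled (it is, since $-1 \in [-8,-1]$). Once these ranges are checked, all the vanishings in the list follow directly from Corollary~\ref{corollary:acyclicities} together with \eqref{eq:cohomology-structure-sheaf}, and exceptionality of the collection is established.
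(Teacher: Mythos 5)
Your proposal is correct and reduces to the same acyclicity facts (Corollary~\ref{corollary:acyclicities} and~\eqref{eq:cohomology-structure-sheaf}) as the paper's proof, via the same dualization $(\cU^{\omega_4})^\vee = \cU^{\omega_4}(-1)$ and the same tensor decomposition $\cU^{\omega_4} \otimes \cU^{\omega_4} = \cU^{2\omega_4} \oplus \cU^{\omega_3} \oplus \cO(1)$. The paper simply organizes the bookkeeping slightly differently (first proving each of the two single-bundle subcollections is exceptional, then merging them), whereas you list the four types of Ext-vanishings in one pass; the underlying cohomology computations are identical, though the phrasing of your item (3) could be tidied up for readability.
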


\begin{proof}
  We first show that subcollections $\cO, \cO(1), \dots, \cO(7)$ and $\cU^{\omega_4}, \cU^{\omega_4}(1), \dots, \cU^{\omega_4}(7)$
  are exceptional, and then show that they can be merged together as in the statement of the lemma.

  By \eqref{eq:cohomology-structure-sheaf} any line bundle $\cO(t)$ is exceptional
  and from Corollary \ref{corollary:acyclicities} it follows that the subcollection $\cO, \cO(1), \dots, \cO(7)$
  is exceptional. Thus, we only need to deal with the subcollection
  $\cU^{\omega_4}, \cU^{\omega_4}(1), \dots, \cU^{\omega_4}(7)$. It is enough to show
  \begin{equation*}
    \Ext^\bullet(\cU^{\omega_4}(t), \cU^{\omega_4}) =
    \begin{cases}
      \Bbbk[0] & \text{if} \quad t = 0, \\
      0 & \text{if} \quad 1 \leq t \leq 7.
    \end{cases}
  \end{equation*}
  By Corollary \ref{corollary:duals} and Lemma \ref{lemma:tensor-products-F4-P1} we rewrite
  \begin{multline*}
    \Ext^\bullet(\cU^{\omega_4}(t), \cU^{\omega_4})
    = H^\bullet(X, \cU^{\omega_4} \otimes \cU^{\omega_4}(-t-1)) = \\
    = H^\bullet(X,  \cU^{2\omega_4}(-t-1)) \oplus H^\bullet(X, \cU^{\omega_3}(-t-1)) \oplus H^\bullet(X, \cO(-t)).
  \end{multline*}
  Finally, applying \eqref{eq:cohomology-structure-sheaf} and Corollary \ref{corollary:acyclicities} we get the desired result.

  To merge the two subcollections it is enough to show
  \begin{equation*}
    \begin{aligned}
      & \Ext^\bullet(\cU^{\omega_4}(t), \cO) = 0 \quad \text{for} \quad 0 \leq t \leq 7, \\
      & \Ext^\bullet(\cO(t), \cU^{\omega_4}) = 0 \quad \text{for} \quad 1 \leq t \leq 7.
    \end{aligned}
  \end{equation*}
  Rewriting both $\Ext$-groups in terms of cohomology, and using Corollary \ref{corollary:duals}
  to dualize $\cU^{\omega_4}(t)$, we see that the above conditions are equivalent to
  \begin{equation*}
    H^\bullet(X, \cU^{\omega_4}(-t)) = 0 \quad \text{for} \quad 1 \leq t \leq 8,
  \end{equation*}
  which holds by Corollary \ref{corollary:acyclicities}.
\end{proof}

Now we start our discussion of the tangent bundle.
Recall that for any rational homogeneous space $\rG/\rP$ we have the following description
of the tangent bundle. Let $\mathfrak{g}$
and $\mathfrak{p}$ be the Lie algebras of $\rG$ and $\rP$ respectively. The group $\rG$
(resp. $\rP$) acts on the Lie algebra $\mathfrak{g}$ (resp. $\mathfrak{p}$) via the adjoint
representation. It is well-known that under the equivalence \eqref{eq:equivalence-homogeneous-bundles-P-reps}
the tangent bundle $T_{\rG/\rP}$ corresponds to the $\rP$-representation~$\mathfrak{g}/\mathfrak{p}$.
Using the Killing form, we get an isomorphism of $\rP$-representations
$\mathfrak{g}/\mathfrak{p} = \left( \mathfrak{n} \right)^\vee$, where~$\mathfrak{n}$ is
the nilradical of $\mathfrak{p}$. This way we also see that the cotangent bundle $\Omega_{\rG/\rP}$
corresponds to $\mathfrak{n}$ under \eqref{eq:equivalence-homogeneous-bundles-P-reps}. Finally,
we note that $T_{\rG/\rP}$ and $\Omega_{\rG/\rP}$ are irreducible if and only if the homogeneous
space $\rG/\rP$ is cominuscule.

Now we are ready to add the (extension of the) tangent bundle to the collection.
For a $\rG$-equivariant vector bundle $E$ we denote by $\ss(E)$ its semisimplification, i.e.
the direct sum of composition factors of $E$.

\begin{lemma}\label{lemma:definition-of-T_X_tilde}
  The following statements hold.
  \begin{enumerate}
    \item On $X$ there exist unique $\rG$-equivariant non-split short exact sequences
    \begin{equation}\label{eq:exact-sequence-tangent-bundle}
      0 \to \cU^{\omega_2}(-1) \to T_X \to \cO(1) \to 0
    \end{equation}
    and
    \begin{equation}\label{eq:exact-sequence-extension-tangent-bundle}
      0 \to \cO \to \widetilde{T}_X \to T_X \to 0,
    \end{equation}
    where $T_X$ is the tangent bundle of $X$ and $\widetilde{T}_X$ is some $\rG$-equivariant vector bundle of rank 16.

    \item We have
    \begin{equation}\label{eq:semisimplification-T_X_tilde}
      \ss(\widetilde{T}_X) = \cO \oplus \cU^{\omega_2}(-1) \oplus \cO(1).
    \end{equation}

    \item We have
    \begin{equation}\label{eq:T_X_tilde-as-mutation}
      T_X = \mathbf{L}_{\langle \cO(1) \rangle}(\cU^{\omega_2}(-1)) \quad \text{and} \quad \widetilde{T}_X = \mathbf{R}_{\langle  \cO \rangle}(T_X),
    \end{equation}
    where $\mathbf{L}$ and $\mathbf{R}$ stand for the left and right mutation functors.

    \item We have a $\rG$-equivariant isomorphism
    \begin{equation}\label{eq:dual-T_X_tilde}
      \widetilde{T}_X = (\widetilde{T}_X)^\vee(1).
    \end{equation}
  \end{enumerate}
\end{lemma}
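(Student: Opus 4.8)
The plan is to prove the four items in order; item~(1) carries the geometric content and items~(2)--(4) follow from it by formal arguments. Throughout, $\Ext$-groups between equivariant vector bundles are rewritten as sheaf cohomology and evaluated with Borel--Weil--Bott, Corollary~\ref{corollary:acyclicities}, and~\eqref{eq:cohomology-structure-sheaf}--\eqref{eq:cohomology-2omega_2(-3)}. For~(1): under~\eqref{eq:equivalence-homogeneous-bundles-P-reps} the bundle $T_X$ corresponds to the $\rP$-module $\mathfrak g/\mathfrak p$. Since $\rP=\rP_1$, grading $\mathfrak g$ by the $\alpha_1$-coefficient gives the contact grading $\mathfrak g=\bigoplus_{|i|\le 2}\mathfrak g_i$ with $\mathfrak g_0=\Lie(\rL)$ and $\dim\mathfrak g_{\pm 2}=1$, and (with the conventions of \S\ref{section:preliminaries}, in which $\rP$ contains the negative Borel) $\mathfrak g/\mathfrak p\cong\mathfrak g_1\oplus\mathfrak g_2$ with $\rP$-submodule $\mathfrak g_1$ and character quotient $\mathfrak g_2$. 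This extension is non-split: the $\operatorname{\mathfrak{sl}}_2$-triple of the highest root $\theta$ acts on $\mathfrak g$ with weights in $\{-2,\dots,2\}$, its raising operator $\operatorname{ad}(e_\theta)$ maps $\mathfrak g_{-1}$ isomorphically onto $\mathfrak g_1$, and since $\mathfrak g_{-1}$ lies in the nilradical of $\mathfrak p$ this shows $\mathfrak g_2$ is not $\rP$-stable in $\mathfrak g/\mathfrak p$. A weight computation with~\eqref{eq:fundamental-weights-F4} names the graded pieces: the quotient is $\cO(1)$ and the $14$-dimensional submodule is the irreducible bundle $\cU^{\omega_2}(-1)$, which gives the non-split sequence~\eqref{eq:exact-sequence-tangent-bundle}; it is the unique such, because $\Ext^1(\cO(1),\cU^{\omega_2}(-1))=H^1(X,\cU^{\omega_2}(-2))=\Bbbk$ by~\eqref{eq:cohomology-omega_2(-2)} and the $\rG$-equivariant structure on a one-dimensional space is forced. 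Dualizing~\eqref{eq:exact-sequence-tangent-bundle} and using Corollary~\ref{corollary:duals} gives $0\to\cO(-1)\to\Omega_X\to\cU^{\omega_2}(-2)\to 0$, hence, $\cO(-1)$ being acyclic, $\Ext^\bullet(T_X,\cO)=H^\bullet(X,\Omega_X)\cong H^\bullet(X,\cU^{\omega_2}(-2))=\Bbbk[-1]$; this produces the unique non-split extension~\eqref{eq:exact-sequence-extension-tangent-bundle}, of rank $16$. Item~(2) is then immediate: \eqref{eq:exact-sequence-tangent-bundle} and~\eqref{eq:exact-sequence-extension-tangent-bundle} filter $\widetilde{T}_X$ with successive quotients $\cO$, $\cU^{\omega_2}(-1)$, $\cO(1)$, all irreducible equivariant bundles. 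Item~(3) comes straight from the definition of mutation: since $\RHom(\cO(1),\cU^{\omega_2}(-1))=H^\bullet(X,\cU^{\omega_2}(-2))=\Bbbk[-1]$, the object $\mathbf{L}_{\langle\cO(1)\rangle}(\cU^{\omega_2}(-1))=\Cone\big(\cO(1)[-1]\to\cU^{\omega_2}(-1)\big)$ is a sheaf sitting in a non-split extension of $\cO(1)$ by $\cU^{\omega_2}(-1)$, hence is $\cong T_X$ by~(1); dually $\RHom(T_X,\cO)=H^\bullet(X,\Omega_X)=\Bbbk[-1]$ makes $\mathbf{R}_{\langle\cO\rangle}(T_X)$ a non-split extension of $T_X$ by $\cO$, hence $\cong\widetilde{T}_X$ by~(1).

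For~(4), the core step is to upgrade~\eqref{eq:exact-sequence-extension-tangent-bundle} to a non-split sequence
\[
  (\dagger)\qquad 0\longrightarrow\Omega_X(1)\longrightarrow\widetilde{T}_X\longrightarrow\cO(1)\longrightarrow 0 .
\]
Granting $(\dagger)$, apply the exact contravariant self-equivalence $\cF\mapsto\cF^\vee(1)$ (which in particular preserves (non-)splitness): it sends $\cO(1)$ to $\cO$ and $\Omega_X(1)$ to $(\Omega_X(1))^\vee(1)=T_X$, so $(\dagger)$ becomes a non-split sequence $0\to\cO\to\widetilde{T}_X^\vee(1)\to T_X\to 0$, which by the uniqueness in~(1) is isomorphic to~\eqref{eq:exact-sequence-extension-tangent-bundle}; in particular $\widetilde{T}_X^\vee(1)\cong\widetilde{T}_X$. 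To get $(\dagger)$, let $\widetilde{F}\subset\widetilde{T}_X$ be the preimage of $\cU^{\omega_2}(-1)\subset T_X$ under the surjection in~\eqref{eq:exact-sequence-extension-tangent-bundle}, so that $0\to\cO\to\widetilde{F}\to\cU^{\omega_2}(-1)\to 0$ and $0\to\widetilde{F}\to\widetilde{T}_X\to\cO(1)\to 0$. The first sequence is non-split: its class is the image of the non-zero class of~\eqref{eq:exact-sequence-extension-tangent-bundle} under the restriction map $\Ext^1(T_X,\cO)\to\Ext^1(\cU^{\omega_2}(-1),\cO)$, which is injective since its kernel is a quotient of $\Ext^1(\cO(1),\cO)=H^1(X,\cO(-1))=0$. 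As $\Ext^1(\cU^{\omega_2}(-1),\cO)=H^1(X,\cU^{\omega_2}(-2))=\Bbbk$ is one-dimensional and $\Omega_X(1)$ (obtained by dualizing~\eqref{eq:exact-sequence-tangent-bundle} and twisting by $\cO(1)$) is likewise a non-split extension of $\cU^{\omega_2}(-1)$ by $\cO$, we conclude $\widetilde{F}\cong\Omega_X(1)$, i.e. the second sequence is $(\dagger)$. Finally $(\dagger)$ is non-split: its surjection onto $\cO(1)$ has kernel $\widetilde{F}$, hence is a non-zero scalar multiple of the composite $\widetilde{T}_X\to T_X\to\cO(1)$, so a splitting of $(\dagger)$ would split~\eqref{eq:exact-sequence-tangent-bundle}, a contradiction.

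The main obstacle is item~(1): the weight bookkeeping that pins down the submodule of $\mathfrak g/\mathfrak p$ as $\cU^{\omega_2}(-1)$ rather than the other $14$-dimensional irreducible $\Sp_6$-module, and the harvesting from Corollary~\ref{corollary:acyclicities} and~\eqref{eq:cohomology-omega_2(-2)} of the fact that the relevant $\Ext^1$-spaces are exactly one-dimensional while the auxiliary ones vanish. Once those inputs are fixed, items~(2)--(4) are formal homological algebra; inside~(4) the only delicate point is the identification $\widetilde{F}\cong\Omega_X(1)$, which is exactly where one-dimensionality of $\Ext^1(\cU^{\omega_2}(-1),\cO)$ enters.
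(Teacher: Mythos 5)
Your argument is correct, and for parts (1)--(3) it essentially coincides with the paper's: the paper also derives \eqref{eq:exact-sequence-tangent-bundle} from the $\rP$-module filtration of $\mathfrak g/\mathfrak p$ (phrased via the derived series $[\mathfrak n,\mathfrak n]\subset\mathfrak n$ of the nilradical rather than the contact grading, which is the same structure), computes $\Ext^\bullet(T_X,\cO)=\Bbbk[-1]$ from the cotangent sequence, and reads off (2) and (3) formally. One small point in (1): showing that $\mathfrak g_2$ itself is not $\rP$-stable is not quite the same as showing the extension is non-split --- you must rule out every lift $e_\theta+v$ with $v\in\mathfrak g_1$; but the same bracket computation (for $x\in\mathfrak g_{-1}$ one has $[x,v]\in\mathfrak g_0\subset\mathfrak p$, so $[x,e_\theta+v]\equiv[x,e_\theta]\bmod\mathfrak p$) closes this immediately, so it is only a presentational gap.

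Where you genuinely diverge is part (4). The paper realizes $\widetilde T_X[1]$ as the (unique, by $\Ext^\bullet(\cO(1),\cO)=0$) convolution of the three-term complex $\cO(1)[-1]\to\cU^{\omega_2}(-1)\to\cO[1]$ and observes that this complex is carried to itself by $\cF\mapsto\cF^\vee(1)$. You instead exhibit the second filtration of $\widetilde T_X$ concretely: the preimage $\widetilde F$ of $\cU^{\omega_2}(-1)$ is a non-split extension of $\cU^{\omega_2}(-1)$ by $\cO$ (injectivity of $\Ext^1(T_X,\cO)\to\Ext^1(\cU^{\omega_2}(-1),\cO)$ via $\Ext^1(\cO(1),\cO)=0$), hence is $\Omega_X(1)$ by one-dimensionality of $\Ext^1(\cU^{\omega_2}(-1),\cO)=H^1(X,\cU^{\omega_2}(-2))$, giving $0\to\Omega_X(1)\to\widetilde T_X\to\cO(1)\to0$; applying $\cF\mapsto\cF^\vee(1)$ and invoking uniqueness from (1) yields \eqref{eq:dual-T_X_tilde}. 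This is a valid and somewhat more elementary route: it trades the convolution formalism for two explicit extension-class computations, at the cost of having to verify non-splitness of the auxiliary sequences (which you do correctly). The paper's convolution argument is shorter once the formalism is in place and generalizes more readily to longer filtrations.
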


\begin{proof}
  Consider the short exact sequence of $\rP$-representations
  \begin{equation*}
    0 \to [\mathfrak{n}, \mathfrak{n}] \to \mathfrak{n} \to \mathfrak{n}/[\mathfrak{n}, \mathfrak{n}] \to 0.
  \end{equation*}
  Since $X$ is an adjoint variety, $[\mathfrak{n}, \mathfrak{n}]$ is one-dimensional
  and $\mathfrak{n}/[\mathfrak{n}, \mathfrak{n}]$ is irreducible
  (see \cite[Section 3.3]{BS}, for example). More concretely, using the formulas
  \eqref{eq:fundamental-weights-F4}--\eqref{eq:roots-F4} to determine the $\rL$-dominant
  roots of $\mathfrak{n}$ we find
  \begin{equation*}
    - \alpha_1 = \omega_2 - 2\omega_1, \quad -\alpha_1 - \alpha_3 - \alpha_4 = \omega_4-\omega_1, \quad - 2 \alpha_1 - 3 \alpha_2 - 4 \alpha_3 - 2 \alpha_4 = - \Theta = - \omega_1.
  \end{equation*}
  Now it follows easily that the weight of $[\mathfrak{n}, \mathfrak{n}]$ is $-\omega_1$
  and, since $- \alpha_1 > -\alpha_1 - \alpha_3 - \alpha_4$  we obtain that $\mathfrak{n}/[\mathfrak{n}, \mathfrak{n}]$ is an irreducible $\rL$-representation
  with highest weight $\omega_2 - 2\omega_1$.
  Therefore,
  for the cotangent bundle we have a short exact sequence
  \begin{equation}\label{eq:exact-sequence-cotangent-bundle}
    0 \to \cO(-1) \to \Omega_X \to \cU^{\omega_2}(-2) \to 0.
  \end{equation}
  Dualizing \eqref{eq:exact-sequence-cotangent-bundle} and using Corollary \ref{corollary:duals}
  we get \eqref{eq:exact-sequence-tangent-bundle}.
  Since by \eqref{eq:cohomology-omega_2(-2)} we have
  \begin{equation}\label{eq:ext-first-extension}
    \Ext^\bullet(\cO(1), \cU^{\omega_2}(-1)) = \Bbbk[-1],
  \end{equation}
  there exists a unique non-trivial $\rG$-equivariant extension of the form \eqref{eq:exact-sequence-tangent-bundle}.

  \medskip

  To prove \eqref{eq:exact-sequence-extension-tangent-bundle} we note
  \begin{equation}\label{eq:ext-second-extension}
    \Ext^\bullet(T_X, \cO) = H^\bullet(X, \Omega_X) = H^\bullet(X, \cU^{\omega_2}(-2)) = \Bbbk[-1],
  \end{equation}
  where we used \eqref{eq:exact-sequence-cotangent-bundle}, Corollary \ref{corollary:acyclicities},
  and \eqref{eq:cohomology-omega_2(-2)}.
  Thus, there exists a unique non-trivial $\rG$-equivariant extension $\widetilde{T}_X$ as in \eqref{eq:exact-sequence-extension-tangent-bundle}.

  \medskip

  For \eqref{eq:semisimplification-T_X_tilde} we just use \eqref{eq:exact-sequence-tangent-bundle}
  and \eqref{eq:exact-sequence-extension-tangent-bundle}.

  \medskip

  For \eqref{eq:T_X_tilde-as-mutation} it is enough to show that \eqref{eq:exact-sequence-tangent-bundle}
  and \eqref{eq:exact-sequence-extension-tangent-bundle} are the corresponding mutation triangles.
  For the former it follows from \eqref{eq:ext-first-extension} and for the latter from \eqref{eq:ext-second-extension}.

  \medskip

  To show \eqref{eq:dual-T_X_tilde} it is convenient to use convolutions
  (see \cites{MR4452435, MR1465519} for background).
  The exact sequences \eqref{eq:exact-sequence-tangent-bundle}--\eqref{eq:exact-sequence-extension-tangent-bundle}
  imply that $\widetilde{T}_X[1]$ is the left convolution of the $3$-term complex
  \begin{equation}\label{eq:complex-ABC}
    \cO(1)[-1] \to \cU^{\omega_2}(-1) \to \cO[1],
  \end{equation}
  in the triangulated category $\Db(X)$; the composition is zero due to the vanishing
  \begin{equation}\label{eq:complex-ABC-ext-vanishing}
    \Ext^\bullet(\cO(1), \cO) = H^\bullet(X, \cO(-1)) = 0
  \end{equation}
  that holds by Corollary~\ref{corollary:acyclicities} and hence \eqref{eq:complex-ABC}
  is indeed a complex. By \eqref{eq:complex-ABC-ext-vanishing} convolutions of
  \eqref{eq:complex-ABC} are unique.
  Dualizing \eqref{eq:complex-ABC}
  and then twisting the result by $\cO(1)$ we get a new $3$-term complex,
  whose right convolution is $(\widetilde{T}_X)^\vee(1)$. However, using Corollary \ref{corollary:duals}
  we see that the new complex is nothing else but the old complex \eqref{eq:complex-ABC} and,
  therefore, its left convolution must be again $\widetilde{T}_X[1]$. Finally,
  since the left convolution always differs from the right convolution just by a shift by $[1]$, we obtain
  the desired claim.
\end{proof}

\begin{remark}
  The isomorphism \eqref{eq:dual-T_X_tilde} gives rise to a $\rG$-equivariant non-degenerate
  bilinear pairing. Moreover, one can show that this pairing is skew-symmetric, i.e. we have $\rG$-equivariant pairing
  \begin{equation*}
    \Lambda^2 \widetilde{T}_X \to \cO(1).
  \end{equation*}
  This pairing comes from the skew-symmetric $\rG$-equivariant pairing on $\cU^{\omega_2}(-1)$. As we won't need this fact, we omit the proof.
\end{remark}

Using \eqref{eq:exact-sequence-tangent-bundle}--\eqref{eq:exact-sequence-extension-tangent-bundle} we compute global sections
\begin{equation*}
  H^0(X, T_X) = V^{\omega_1} = \mathfrak{g} \quad \text{and} \quad H^0(X, \widetilde{T}_X) = V^{\omega_1} \oplus \bC.
\end{equation*}
Since $X$ is homogeneous, the tangent bundle $T_X$ is globally generated and we have a surjection
\begin{equation*}
  V^{\omega_1} \otimes \cO \to T_X.
\end{equation*}
Therefore, $\widetilde{T}_X$ is also globally generated and we have a surjection
\begin{equation}\label{eq:global-generation-Ttilde}
  \left( V^{\omega_1} \oplus \bC \right) \otimes \cO \xrightarrow{(\alpha, \beta)} \widetilde{T}_X.
\end{equation}
Recall that a 3-term complex sitting in degrees $-1$, $0$, $1$ and exact in the first and last terms is called a \textsf{monad}.

\begin{lemma}\label{lemma:monad-U}
There is a monad
\begin{equation}\label{eq:monad-U}
  \widetilde{T}_X(-1) \to \left( V^{\omega_1} \oplus \bC \right) \otimes \cO \xrightarrow{(\alpha, \beta)} \widetilde{T}_X,
\end{equation}
whose middle cohomology is isomorphic to $\cU^{2\omega_4}(-1)$. In particular, the kernel of \eqref{eq:global-generation-Ttilde} is an extension of $\widetilde{T}_X(-1)$ and $\cU^{2\omega_4}(-1)$.
\end{lemma}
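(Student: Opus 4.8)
The plan is to produce the monad \eqref{eq:monad-U} by transposing the evaluation map \eqref{eq:global-generation-Ttilde} against itself via the self-duality \eqref{eq:dual-T_X_tilde}, and then to identify the middle cohomology by a composition-factor count.

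As preparation I would collect a few facts. First, combining \eqref{eq:exact-sequence-tangent-bundle}--\eqref{eq:exact-sequence-extension-tangent-bundle} with \eqref{eq:cohomology-structure-sheaf} and the acyclicity of $\cU^{\omega_2}(-1)$ from Corollary~\ref{corollary:acyclicities} gives $H^\bullet(X,\widetilde{T}_X)=\big(V^{\omega_1}\oplus\bC\big)[0]$; in particular $H^{\geq 1}(X,\widetilde{T}_X)=0$. Second, the map $\alpha$ in \eqref{eq:global-generation-Ttilde} is already surjective: its image is a $\rG$-equivariant subbundle of $\widetilde{T}_X$ surjecting onto $T_X$, and were it of rank $15$ it would split \eqref{eq:exact-sequence-extension-tangent-bundle}, contradicting Lemma~\ref{lemma:definition-of-T_X_tilde}. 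Third, listing the composition factors of $\widetilde{T}_X\otimes\widetilde{T}_X$ by means of \eqref{eq:semisimplification-T_X_tilde} and Lemmas~\ref{lemma:tensor-products-F4-P1} and \ref{lemma:exterior-powers-omega_4}, and computing $H^0$ of each factor by Borel--Weil--Bott, one checks that the only composition factor whose $H^0$ contains a trivial $\rG$-summand is the subsheaf $\cO\subset\widetilde{T}_X\otimes\widetilde{T}_X$ coming from \eqref{eq:exact-sequence-extension-tangent-bundle}; by left exactness of $H^0$ this yields $\dim\Hom_\rG\!\big(\widetilde{T}_X(-1),\widetilde{T}_X\big)=\dim H^0(X,\widetilde{T}_X\otimes\widetilde{T}_X)^\rG\leq 1$, and since $\beta\circ\beta^\vee$ (with $\beta^\vee$ the transpose of $\beta$ under \eqref{eq:dual-T_X_tilde}) is a nonzero such map, this space is exactly $\bC\cdot(\beta\circ\beta^\vee)$.

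Next I would build the monad. Dualizing $\alpha$ and $\beta$ and using the isomorphisms $\widetilde{T}_X(-1)\cong\widetilde{T}_X^{\vee}$ from \eqref{eq:dual-T_X_tilde} and $V^{\omega_1}\cong(V^{\omega_1})^\vee$ (representations in type $\rF_4$ are self-dual), one obtains $\rG$-equivariant maps $\alpha^\vee\colon\widetilde{T}_X(-1)\to V^{\omega_1}\otimes\cO$ and $\beta^\vee\colon\widetilde{T}_X(-1)\to\cO$, with $\alpha^\vee$ injective because $\alpha$ is surjective. The composite $\alpha\circ\alpha^\vee$ lies in $\Hom_\rG(\widetilde{T}_X(-1),\widetilde{T}_X)=\bC\cdot(\beta\circ\beta^\vee)$, so $\alpha\circ\alpha^\vee=\lambda\,\beta\circ\beta^\vee$ for some scalar $\lambda$, and I set $a:=(\alpha^\vee,-\lambda\beta^\vee)\colon\widetilde{T}_X(-1)\to(V^{\omega_1}\oplus\bC)\otimes\cO$. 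Then $(\alpha,\beta)\circ a=\alpha\circ\alpha^\vee-\lambda\,\beta\circ\beta^\vee=0$, while $a$ is injective since its first component is; together with the surjectivity of \eqref{eq:global-generation-Ttilde} this is exactly the monad \eqref{eq:monad-U}, and it realizes $\widetilde{T}_X(-1)$ (via $a$) as a subsheaf of $K:=\ker(\alpha,\beta)$, the kernel of \eqref{eq:global-generation-Ttilde}.

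Finally I would identify the middle cohomology $\cH$, which is the cokernel of the equivariant embedding $a\colon\widetilde{T}_X(-1)\hookrightarrow K$. Being an equivariant map of homogeneous bundles, $a$ has homogeneous cokernel, so $\cH$ is a homogeneous bundle and $[\cH]=[K]-[\widetilde{T}_X(-1)]$ in the Grothendieck group of $\Rep(\rL)$. Using $[K]=[V^{\omega_1}\oplus\bC]-[\widetilde{T}_X]$, the decomposition $\mathfrak g=(\mathfrak g/\mathfrak p)\oplus\mathfrak l\oplus\mathfrak n$ as an $\rL$-module, the composition factors of $\mathfrak n$ and $\mathfrak g/\mathfrak p$ read off from the proof of Lemma~\ref{lemma:definition-of-T_X_tilde} (note that $\mathfrak l$ contributes the trivial module and the adjoint of $\rLprime$, the latter lifting to the $\rL$-module $V^{2\omega_4-\omega_1}_\rL$), together with \eqref{eq:semisimplification-T_X_tilde}, one finds that all composition factors cancel except a single copy of the irreducible $\rL$-module of highest weight $2\omega_4-\omega_1$. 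Hence $\cH$ has length one, so $\cH\cong\cU^{2\omega_4}(-1)$ by \eqref{eq:line-bundle-twists}, and the last assertion about $K$ follows. The one genuinely delicate point is the vanishing $(\alpha,\beta)\circ a=0$, whose content is the representation-theoretic computation that $\Hom_\rG(\widetilde{T}_X(-1),\widetilde{T}_X)$ is one-dimensional; the composition-factor bookkeeping in the last step is routine given the tables of \S\ref{subsection:tensor-products-and-exterior-powers}.
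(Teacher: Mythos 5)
Your proof is correct and follows the same strategy as the paper's: build the commutative square from $\alpha,\beta$ and their transposes under the self-duality \eqref{eq:dual-T_X_tilde}, reduce the commutativity to the one-dimensionality of $\Hom(\widetilde{T}_X(-1),\widetilde{T}_X)$, take the total complex to get the monad, and identify the middle cohomology by a weight/composition-factor count. The only (minor) differences are cosmetic: you deduce injectivity of the first map from surjectivity of $\alpha$ alone, whereas the paper dualizes the surjection $(\alpha,\beta)$ directly, and your identification of the middle cohomology via cancellation in the Grothendieck group of $\Rep(\rL)$ using $\mathfrak g=\mathfrak n\oplus\mathfrak l\oplus\mathfrak n^-$ is a cleaner rendering of the paper's rank-$21$-plus-dominant-weight argument.
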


\begin{proof}
Let us consider the diagram
\begin{equation}\label{eq:commutative-square}
  \xymatrix{
  \widetilde{T}_X(-1) \ar[r]^{\alpha^t} \ar[d]_{\beta^t} & V^{\omega_1} \otimes \cO \ar[d]^{\alpha} \\
  \cO \ar[r]^{\beta} & \widetilde{T}_X
  }
\end{equation}
Here $\alpha$ is the restriction of \eqref{eq:global-generation-Ttilde} to $V^{\omega_1} \otimes \cO$,
$\beta$ is provided by \eqref{eq:exact-sequence-extension-tangent-bundle} and $\alpha^t$, $\beta^t$ are
their duals; we also used \eqref{eq:dual-T_X_tilde} and the self-duality of $V^{\omega_1} \otimes \cO$.
Since all the maps in \eqref{eq:commutative-square} are $\rG$-equivariant, their compositions
$\beta \circ \beta^t$ and $\alpha \circ \alpha^t$ are also $\rG$-equivariant.
Moreover, using the equivalence \eqref{eq:equivalence-homogeneous-bundles-P-reps}
it is easy to see from some basic linear algerba that both $\beta \circ \beta^t$
and~$\alpha \circ \alpha^t$ are non-zero.

Using \eqref{eq:semisimplification-T_X_tilde}, \eqref{eq:dual-T_X_tilde}, Lemma \ref{lemma:tensor-products-F4-P1}
and Borel--Weil--Bott it is easy to see that the space of $\rG$-equivariant morphisms
$\Hom(\widetilde{T}_X(-1), \widetilde{T}_X) = H^0(X, \widetilde{T}_X \otimes \widetilde{T}_X)$
is one-dimensional and is generated by the natural morphism
$\widetilde{T}_X(-1) = \widetilde{T}_X^\vee \to \cO \to \widetilde{T}_X$ provided by
\eqref{eq:exact-sequence-extension-tangent-bundle} and \eqref{eq:dual-T_X_tilde}.
Thus, perhaps up to rescaling, the square \eqref{eq:commutative-square} is commutative.

Taking the total complex of \eqref{eq:commutative-square} we get
\begin{equation*}
  \widetilde{T}_X(-1) \xrightarrow{(\alpha^t, - \beta^t)} \left( V^{\omega_1} \oplus \bC \right) \otimes \cO \xrightarrow{(\alpha, \beta)} \widetilde{T}_X
\end{equation*}
Since $(\alpha, \beta)$ is surjective, the map $(\alpha^t, - \beta^t)$ is injective. Thus, we have a monad. Taking the alternating sum of ranks of its terms we see that its cohomology is $\rG$-equivaraint vector bundle of rank $21$.
Since we have $V^{\omega_1} = \mathfrak{g}$, the weights of $V^{\omega_1}$ are the roots \eqref{eq:roots-F4}.
Using~\eqref{eq:fundamental-weights-F4} we see that the root $(1,-1,0,0) = 2\omega_4 - \omega_1$ is $\rL$-dominant,
and since the bundle $\cU^{2\omega_4}(-1)$ is of the required rank $21$, the cohomology of the monad has to be $\cU^{2\omega_4}(-1)$.
\end{proof}

Applying Lemma \ref{lemma:tensor-products-F4-P1} and Corollaries \ref{corollary:duals} and \ref{corollary:acyclicities}
to compute the necessary $\Ext$-groups we obtain the inclusion
\begin{equation}\label{eq:semiorthogonality-omega_2(-1)}
  \cU^{\omega_2}(-1) \in \big \langle \cU^{\omega_4}(1), \cO(2), \cU^{\omega_4}(2), \dots, \cO(7), \cU^{\omega_4}(7) \big \rangle^\perp.
\end{equation}
Therefore, the left mutation of $\cU^{\omega_2}(-1)$ with respect to
$\langle \cU^{\omega_4}(1), \cO(2), \cU^{\omega_4}(2), \dots, \cO(7), \cU^{\omega_4}(7) \big \rangle$
is $\cU^{\omega_2}(-1)$ itself. Combining it with Lemma \ref{lemma:definition-of-T_X_tilde}(3) we get
\begin{equation*}
  \mathbf{L}_{\langle \cO(1), \cU^{\omega_4}(1), \dots, \cO(7), \cU^{\omega_4}(7) \rangle}(\cU^{\omega_2}(-1)) = T_X,
\end{equation*}
and, therefore, we have
\begin{equation}\label{eq:semiorthogonality-T_X}
  T_X \in \big \langle \cO(1), \cU^{\omega_4}(1), \dots, \cO(7), \cU^{\omega_4}(7) \big \rangle^\perp.
\end{equation}
Further, by Lemma \ref{lemma:definition-of-T_X_tilde}(3) we have $\mathbf{R}_{\langle  \cO \rangle}(T_X) = \widetilde{T}_X$
and by Lemma \ref{lemma:collection-O-and-U} we have
\begin{equation*}
  \cO \in \big \langle \cO(1), \cU^{\omega_4}(1), \dots, \cO(7), \cU^{\omega_4}(7) \big \rangle^\perp.
\end{equation*}
Hence, we obtain
\begin{equation}\label{eq:semiorthogonality-T_X_tilde}
  \widetilde{T}_X \in {}^\perp \big \langle \cO \big \rangle  \cap  \big \langle \cO(1), \cU^{\omega_4}(1), \dots, \cO(7), \cU^{\omega_4}(7) \big \rangle^\perp.
\end{equation}

\begin{lemma}\label{lemma:collection-Ttilde}
  The sequence of vector bundles $\widetilde{T}_X, \widetilde{T}_X(1), \dots, \widetilde{T}_X(7)$ is exceptional.
\end{lemma}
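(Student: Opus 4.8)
The plan is to show that $\Ext^\bullet(\widetilde{T}_X(t), \widetilde{T}_X) = 0$ for $1 \le t \le 7$ and that each $\widetilde{T}_X(t)$ is exceptional; since twisting by $\cO(1)$ is an autoequivalence it suffices to treat the case $t \ge 0$ with $\widetilde{T}_X$ on the right. Using the self-duality $\widetilde{T}_X = \widetilde{T}_X^\vee(1)$ from \eqref{eq:dual-T_X_tilde}, we rewrite
\begin{equation*}
  \Ext^\bullet(\widetilde{T}_X(t), \widetilde{T}_X) = H^\bullet(X, \widetilde{T}_X^\vee(-t) \otimes \widetilde{T}_X) = H^\bullet(X, \widetilde{T}_X \otimes \widetilde{T}_X(-t-1)).
\end{equation*}
So the whole computation reduces to understanding the cohomology of $\widetilde{T}_X \otimes \widetilde{T}_X$ twisted down by $\cO(-1), \dots, \cO(-8)$.

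The first key step is to resolve $\widetilde{T}_X \otimes \widetilde{T}_X$ into a complex whose terms are the equivariant bundles $\cU^\lambda(m)$ appearing in Corollary~\ref{corollary:acyclicities}. Tensoring the two short exact sequences \eqref{eq:exact-sequence-tangent-bundle} and \eqref{eq:exact-sequence-extension-tangent-bundle} together gives a filtration of $\widetilde{T}_X \otimes \widetilde{T}_X$ whose associated graded pieces are tensor products of the summands $\cO$, $\cU^{\omega_2}(-1)$, $\cO(1)$ of $\ss(\widetilde{T}_X)$ — that is, copies of $\cO$, $\cO(1)$, $\cO(2)$, $\cU^{\omega_2}(-1)$, $\cU^{\omega_2}$, and $\cU^{\omega_2} \otimes \cU^{\omega_2}$. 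The last of these decomposes by Lemma~\ref{lemma:tensor-products-F4-P1} into $\cU^{2\omega_2} \oplus \cU^{2\omega_3}(1) \oplus \cU^{2\omega_4}(2) \oplus \cO(3)$. Thus every composition factor of $\widetilde{T}_X \otimes \widetilde{T}_X(-t-1)$ for $0 \le t \le 7$ lies among the bundles $\cO(m)$, $\cU^{\omega_2}(m)$, $\cU^{2\omega_2}(m)$, $\cU^{2\omega_3}(m)$, $\cU^{2\omega_4}(m)$ with $m$ in the appropriate negative range, and a glance at Corollary~\ref{corollary:acyclicities} together with \eqref{eq:cohomology-structure-sheaf}, \eqref{eq:cohomology-omega_2(-2)}, \eqref{eq:cohomology-2omega_2(-3)} shows which of these are acyclic and which are not.

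The second step is to control the nonzero contributions. For $t=0$ we must get $\Bbbk[0]$, i.e. $H^\bullet(X, \widetilde{T}_X \otimes \widetilde{T}_X(-1))$ should be one-dimensional in degree $0$; here the surviving composition factors are $\cO(-1)$ twisted copies that are acyclic, plus the genuinely nonzero pieces coming from $\cU^{\omega_2}(-2)$ (which contributes $\Bbbk[-1]$ by \eqref{eq:cohomology-omega_2(-2)}) and $\cU^{2\omega_2}(-3)$ (which contributes $V^{\omega_1}[-1]$ by \eqref{eq:cohomology-2omega_2(-3)}) and $\cO$ itself (contributing $\Bbbk[0]$). One then has to chase the hypercohomology spectral sequence of the filtration and check that all the degree-shifted contributions cancel against one another via the connecting maps, leaving exactly $\Bbbk[0]$ — this is where the monad description of $\widetilde{T}_X$ from Lemma~\ref{lemma:monad-U}, or equivalently the mutation description \eqref{eq:T_X_tilde-as-mutation}, becomes the efficient tool: expressing $\widetilde{T}_X$ as a convolution of $\cO(1)[-1] \to \cU^{\omega_2}(-1) \to \cO[1]$ turns the computation into a double-complex bookkeeping exercise over the already-tabulated cohomology groups. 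For $1 \le t \le 7$ the analogous analysis should yield outright vanishing, since the twists push every composition factor into a singular region of the weight lattice.

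The main obstacle I anticipate is precisely the cancellation bookkeeping in the $t=0$ case (and possibly borderline values like $t=1$): several composition factors contribute nonzero cohomology in overlapping degrees, so vanishing of the total $\Ext$ in the wrong degrees is not automatic from Corollary~\ref{corollary:acyclicities} alone but relies on the differentials in the spectral sequence being nonzero. Handling this cleanly will require using the explicit convolution \eqref{eq:complex-ABC} for $\widetilde{T}_X$ on \emph{one} factor and one of the short exact sequences on the other, so that the problem becomes a single filtration with at most three or four graded pieces rather than a nine-term product; the nonvanishing of the relevant connecting homomorphisms then follows from the fact, established in the proof of Lemma~\ref{lemma:monad-U}, that the only $\rG$-equivariant map $\widetilde{T}_X^\vee \to \widetilde{T}_X$ is (up to scalar) the composite through $\cO$, which forces the spectral sequence differentials to be the expected isomorphisms.
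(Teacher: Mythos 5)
Your reduction for $2 \le t \le 6$ is exactly right and is what the paper does: replace both factors by their semisimplification $\cO \oplus \cU^{\omega_2}(-1) \oplus \cO(1)$, expand $\cU^{\omega_2} \otimes \cU^{\omega_2}$ via Lemma~\ref{lemma:tensor-products-F4-P1}, and invoke Corollary~\ref{corollary:acyclicities}. For those $t$ every composition factor is indeed acyclic and no spectral sequence chasing is needed.

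However, your assertion that ``for $1 \le t \le 7$ the analogous analysis should yield outright vanishing'' is false at the boundary. For $t=1$ (and by Serre duality $t=7$), the composition factors of $\widetilde{T}_X \otimes \widetilde{T}_X(-2)$ include $\cO$ (contributing $\Bbbk[0]$), $\cU^{\omega_2}(-2)^{\oplus 2}$ (contributing $\Bbbk[-1]^{\oplus 2}$ by \eqref{eq:cohomology-omega_2(-2)}), and $\cU^{2\omega_3}(-3)$ (contributing $\Bbbk[-2]$ -- note $-3$ is explicitly in the exception set for $\cU^{2\omega_3}(t)$ in Corollary~\ref{corollary:acyclicities}). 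The Euler characteristic vanishes, but concluding vanishing of each cohomology degree requires nontrivial differentials in the spectral sequence of the filtration, exactly the delicate cancellation you flag for $t=0$. So your parenthetical worry about ``borderline values like $t=1$'' is precisely where the main plan breaks, and you have not given a way through.

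The paper sidesteps all of this with a cleaner inductive device, which is the real content of Lemma~\ref{lemma:monad-U}: the monad expresses $\widetilde{T}_X$ (the right-hand factor) in terms of $\widetilde{T}_X(-1)$, $\cO$, and $\cU^{2\omega_4}(-1)$. Therefore $\Ext^\bullet(\widetilde{T}_X(t), \widetilde{T}_X)$ vanishes as soon as $\Ext^\bullet(\widetilde{T}_X(t), \widetilde{T}_X(-1))$, $\Ext^\bullet(\widetilde{T}_X(t), \cO)$, and $\Ext^\bullet(\widetilde{T}_X(t), \cU^{2\omega_4}(-1))$ are controlled. The first of these is the $(t{+}1)$-case, already known for $t+1 \ge 2$; the latter two are handled by straight semisimplification of the left factor plus Corollary~\ref{corollary:acyclicities} (and, for exceptionality at $t=0$, one factor of $\Bbbk[-1]$ from $\cU^{\omega_2}(-2)$ is all that survives). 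This replaces the multi-term spectral-sequence cancellation you would face by a two-step induction $t = 2, \ldots, 6 \Rightarrow t = 1, 7 \Rightarrow t = 0$, in which no cancellation argument is ever needed. You mention the monad as ``the efficient tool,'' but the proposal still frames its use as double-complex bookkeeping over the tensor square; the decisive point you are missing is to use the monad to \emph{shift the twist}, reducing the pairing of $\widetilde{T}_X$ with itself to the pairing of $\widetilde{T}_X$ with $\widetilde{T}_X(-1)$, which has already been computed.
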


\begin{proof}
  It is enough to prove
  \begin{equation}\label{eq:proof-exceptionality-Ttilde-1}
    \Ext^\bullet(\widetilde{T}_X, \widetilde{T}_X) = \bC[0]
  \end{equation}
  and
  \begin{equation}\label{eq:proof-exceptionality-Ttilde-2}
    \Ext^\bullet(\widetilde{T}_X(t), \widetilde{T}_X) = 0 \quad \text{for} \quad 1 \leq t \leq 7.
  \end{equation}
  We split the proof into 3 cases.

  \medskip

  \noindent \emph{Identity \eqref{eq:proof-exceptionality-Ttilde-2} for $2 \leq t \leq 6$.}
  Exact sequences \eqref{eq:exact-sequence-tangent-bundle}--\eqref{eq:exact-sequence-extension-tangent-bundle}
  imply that we have an inclusion
  \begin{equation*}
    \widetilde{T}_X \in \cC \coloneqq \big \langle \cO, \cO(1), \cU^{\omega_2}(-1) \big \rangle
  \end{equation*}
  and, therefore, it is enough to show that we have
  \begin{equation*}
    \Hom(\cC(t), \cC) = 0 \quad \text{for} \quad 2 \leq t \leq 6,
  \end{equation*}
  which in its turn follows from
  \begin{equation*}
    \begin{aligned}
      & \Ext^\bullet(\cO(t), \cO) = H^\bullet(X, \cO(-t)) = 0  \quad \text{for} \quad 1 \leq t \leq 7, \\
      & \Ext^\bullet(\cU^{\omega_2}(t-1), \cO) = H^\bullet(X, \cU^{\omega_2}(-t-2)) = 0  \quad \text{for} \quad 1 \leq t \leq 6, \\
      & \Ext^\bullet(\cO(t), \cU^{\omega_2}(-1)) = H^\bullet(X, \cU^{\omega_2}(-t-1)) = 0  \quad \text{for} \quad 2 \leq t \leq 7, \\
      & \Ext^\bullet(\cU^{\omega_2}(t-1), \cU^{\omega_2}(-1)) = 0 \quad \text{for} \quad 2 \leq t \leq 6.
    \end{aligned}
  \end{equation*}
  The first three vanishings hold immediately by Corollary \ref{corollary:acyclicities}.
  To show the last one, using Lemma \ref{lemma:tensor-products-F4-P1} and Corollary \ref{corollary:duals}, we rewrite
  \begin{equation*}
    \Ext^\bullet(\cU^{\omega_2}(t-1), \cU^{\omega_2}(-1)) = H^\bullet(X, \cU^{2\omega_2}(-3-t) \oplus \cU^{2\omega_3}(-2-t) \oplus \cU^{2\omega_4}(-1-t) \oplus \cO(-t))
  \end{equation*}
  and again apply Corollary \ref{corollary:acyclicities}.

  \medskip

  \noindent \emph{Identity \eqref{eq:proof-exceptionality-Ttilde-2} for $t = 1$ and $t = 7$.} Note that the case $t = 7$ is equivalent to $t = 1$ by Serre duality. Thus, we only need to consider the case $t = 1$. By Lemma \ref{lemma:monad-U} it is enough to show
  \begin{equation*}
    \Ext^\bullet(\widetilde{T}_X(1), \cO) = 0 , \quad \Ext^\bullet(\widetilde{T}_X(1), \widetilde{T}_X(-1)) = 0 , \quad \Ext^\bullet(\widetilde{T}_X(1), \cU^{2\omega_4}(-1)) = 0.
  \end{equation*}
  The first one and the last one follow from Lemma \ref{lemma:tensor-products-F4-P1} and Corollaries
  \ref{corollary:duals} and \ref{corollary:acyclicities} by replacing $\widetilde{T}_X(1)$ with its
  semisimplification \eqref{eq:semisimplification-T_X_tilde}. The second one follows
  from \eqref{eq:proof-exceptionality-Ttilde-2} with $t = 2$, which we already know.

  \medskip

  \noindent \emph{Identity \eqref{eq:proof-exceptionality-Ttilde-1}.} By Lemma \ref{lemma:monad-U} it is enough to prove
  \begin{equation*}
    \Ext^\bullet(\widetilde{T}_X, \cO) = 0 , \quad \Ext^\bullet(\widetilde{T}_X, \widetilde{T}_X(-1)) = 0 , \quad \Ext^\bullet(\widetilde{T}_X, \cU^{2\omega_4}(-1)) = \Bbbk[-1].
  \end{equation*}
  The first one holds by \eqref{eq:semiorthogonality-T_X_tilde}. The second one follows
  from \eqref{eq:proof-exceptionality-Ttilde-2} with $t = 1$, which we already know.
  To see the last one, according to \eqref{eq:semisimplification-T_X_tilde}, it is enough to show
  \begin{equation*}
  \begin{aligned}
    & \Ext^\bullet(\cO(t), \cU^{2\omega_4}(-1)) = 0, \quad \text{for} \quad 0 \leq t \leq 1, \\
    & \Ext^\bullet(\cU^{\omega_2}(-1)), \cU^{2\omega_4}(-1)) = \Bbbk[-1],
  \end{aligned}
  \end{equation*}
  which follow easily from Lemma \ref{lemma:tensor-products-F4-P1}, Corollaries
  \ref{corollary:duals} and \ref{corollary:acyclicities}, and \eqref{eq:cohomology-omega_2(-2)}.
\end{proof}

\begin{lemma}
  The collection \eqref{eq:collection-F4-P1} is exceptional.
\end{lemma}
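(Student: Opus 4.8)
The collection \eqref{eq:collection-F4-P1} consists of eight consecutive blocks $\big(\cO(k),\cU^{\omega_4}(k),\widetilde{T}_X(k)\big)$ for $k=0,\dots,7$. The plan is to reduce the claim to a short list of $\Ext$-vanishings and then to observe that all but one of them are already at our disposal. Each member of the collection is exceptional: for $\cO(k)$ and $\cU^{\omega_4}(k)$ this is contained in Lemma~\ref{lemma:collection-O-and-U}, and for $\widetilde{T}_X(k)$ it is Lemma~\ref{lemma:collection-Ttilde}. It remains to check the semiorthogonality relations $\RHom(E_j,E_i)=0$ for $j>i$. I split the pairs $(E_j,E_i)$ into those lying inside a single block and those straddling two blocks, and in each case I twist by the appropriate power of $\cO(1)$; when $\widetilde{T}_X$ occupies the first slot I also apply Serre duality, using $\omega_X=\cO(-8)$ from \eqref{eq:dimension-and-index}. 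After this reduction the whole of semiorthogonality amounts to the across-block vanishings
\begin{equation*}
  \RHom\big(A(t),B\big)=0,\qquad A,B\in\{\cO,\cU^{\omega_4},\widetilde{T}_X\},\quad 1\le t\le 7,
\end{equation*}
together with the three intra-block vanishings $\RHom(\cU^{\omega_4},\cO)=0$, $\RHom(\widetilde{T}_X,\cO)=0$, and $\RHom(\widetilde{T}_X,\cU^{\omega_4})=0$.

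Almost all of these are available. The relations with $A,B\in\{\cO,\cU^{\omega_4}\}$, including $\RHom(\cU^{\omega_4},\cO)=0$, are part of Lemma~\ref{lemma:collection-O-and-U}. The relations $\RHom(\cO(t),\widetilde{T}_X)=\RHom(\cU^{\omega_4}(t),\widetilde{T}_X)=0$ for $1\le t\le 7$, as well as $\RHom(\widetilde{T}_X,\cO)=0$, are exactly \eqref{eq:semiorthogonality-T_X_tilde}. The relations $\RHom(\widetilde{T}_X(t),\cO)=\RHom(\widetilde{T}_X(t),\cU^{\omega_4})=0$ for $1\le t\le 7$ follow from \eqref{eq:semiorthogonality-T_X_tilde} after Serre duality, since $\widetilde{T}_X(t)\otimes\omega_X=\widetilde{T}_X(t-8)$ and $8-t\in[1,7]$. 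Finally $\RHom(\widetilde{T}_X(t),\widetilde{T}_X)=0$ for $1\le t\le 7$ is \eqref{eq:proof-exceptionality-Ttilde-2}. Thus the only relation left to prove is $\RHom(\widetilde{T}_X,\cU^{\omega_4})=0$.

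For this I would work with the two defining sequences \eqref{eq:exact-sequence-tangent-bundle} and \eqref{eq:exact-sequence-extension-tangent-bundle}. Since $\RHom(\cO(1),\cU^{\omega_4})=H^\bullet(X,\cU^{\omega_4}(-1))=0$ by Corollary~\ref{corollary:acyclicities}, applying $\RHom(-,\cU^{\omega_4})$ to \eqref{eq:exact-sequence-tangent-bundle} gives $\RHom(T_X,\cU^{\omega_4})\cong\RHom(\cU^{\omega_2}(-1),\cU^{\omega_4})$; by Corollary~\ref{corollary:duals} and Lemma~\ref{lemma:tensor-products-F4-P1} the right-hand side equals $H^\bullet\big(X,\cU^{\omega_2+\omega_4}(-2)\oplus\cU^{\omega_3}(-1)\big)$, and a Borel--Weil--Bott computation identifies it with $\rV_\rG^{\omega_4}$ placed in cohomological degree $1$ (the summand $\cU^{\omega_3}(-1)$ is acyclic by Corollary~\ref{corollary:acyclicities}). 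On the other hand $\RHom(\cO,\cU^{\omega_4})=H^\bullet(X,\cU^{\omega_4})=\rV_\rG^{\omega_4}$ in degree $0$. Applying $\RHom(-,\cU^{\omega_4})$ to \eqref{eq:exact-sequence-extension-tangent-bundle} then shows that $\RHom(\widetilde{T}_X,\cU^{\omega_4})$ vanishes if and only if the connecting map $\rV_\rG^{\omega_4}\to\rV_\rG^{\omega_4}$ is an isomorphism; being $\rG$-equivariant with $\rV_\rG^{\omega_4}$ irreducible, this map is multiplication by a scalar, so everything comes down to showing that this scalar is non-zero. This is the step I expect to be the main obstacle: the connecting map is the Yoneda product against the non-trivial extension class of \eqref{eq:exact-sequence-extension-tangent-bundle}, and I would establish its non-vanishing either by evaluating the pairing on one explicit highest-weight vector, or by a direct computation with the $\rP$-representation $\widetilde{T}_X$ via the equivalence \eqref{eq:equivalence-homogeneous-bundles-P-reps} (whose composition factors, read off from \eqref{eq:semisimplification-T_X_tilde}, do not contain $\rV_\rL^{\omega_4}$). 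Granting $\RHom(\widetilde{T}_X,\cU^{\omega_4})=0$, the collection \eqref{eq:collection-F4-P1} is exceptional.
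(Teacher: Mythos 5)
Your reduction of exceptionality to a finite list of $\Ext$-vanishings is well organised, your use of Serre duality to handle $\RHom(\widetilde{T}_X(t),\cO)$ and $\RHom(\widetilde{T}_X(t),\cU^{\omega_4})$ for $t\in[1,7]$ is correct, and your accounting of which vanishings are already supplied by Lemma~\ref{lemma:collection-O-and-U}, Lemma~\ref{lemma:collection-Ttilde}, and \eqref{eq:semiorthogonality-T_X_tilde} is accurate. You also correctly identify that the single genuinely new vanishing is $\RHom(\widetilde{T}_X,\cU^{\omega_4})=0$, and your long-exact-sequence analysis (using \eqref{eq:exact-sequence-tangent-bundle}, \eqref{eq:exact-sequence-extension-tangent-bundle}, Corollary~\ref{corollary:duals}, Lemma~\ref{lemma:tensor-products-F4-P1}, and Borel--Weil--Bott) correctly reduces this to the claim that the connecting map $\rV_\rG^{\omega_4}\to\rV_\rG^{\omega_4}$ induced by the extension class of \eqref{eq:exact-sequence-extension-tangent-bundle} is a nonzero scalar.

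However, at exactly this point your argument stops being a proof: you explicitly leave the nonvanishing of that scalar open and only sketch two possible strategies. This is a genuine gap, and the second strategy you suggest does not work as stated. Observing that the composition factors of $\widetilde{T}_X$ in \eqref{eq:semisimplification-T_X_tilde} do not include $\rV_\rL^{\omega_4}$ only shows that there is no $\rG$-\emph{equivariant} morphism $\widetilde{T}_X\to\cU^{\omega_4}$; but if the connecting scalar were zero, $\Hom(\widetilde{T}_X,\cU^{\omega_4})$ would be the nontrivial irreducible $\rG$-representation $\rV_\rG^{\omega_4}$, whose $\rG$-invariant part is already zero, so the observation cannot distinguish the two cases. (Equivalently, computing $H^0(X,\widetilde{T}_X^\vee\otimes\cU^{\omega_4})$ from the semisimplification leaves undetermined precisely the differential in the associated spectral sequence that encodes this scalar.) The paper avoids the scalar argument altogether: it first dualizes via \eqref{eq:dual-T_X_tilde} and Corollary~\ref{corollary:duals} to rewrite $\Ext^\bullet(\widetilde{T}_X,\cU^{\omega_4})\cong\Ext^\bullet(\cU^{\omega_4},\widetilde{T}_X)$, and then invokes the monad of Lemma~\ref{lemma:monad-U}, which reduces the claim to three unconditional vanishings $\Ext^\bullet(\cU^{\omega_4},\cO)=\Ext^\bullet(\cU^{\omega_4},\widetilde{T}_X(-1))=\Ext^\bullet(\cU^{\omega_4},\cU^{2\omega_4}(-1))=0$, each established by Corollaries~\ref{corollary:duals} and~\ref{corollary:acyclicities} and the already-proved cases. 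Adopting that monad-based step, or supplying an actual computation of the connecting scalar, would complete your argument.
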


\begin{proof}
  In view of the previous lemmas it is enough to show
  \begin{equation*}
    \Ext^\bullet(\widetilde{T}_X(t), \cO) = 0 \quad \text{for} \quad 0 \leq t \leq 7
  \end{equation*}
  and
  \begin{equation*}
    \Ext^\bullet(\widetilde{T}_X(t), \cU^{\omega_4}) = 0 \quad \text{for} \quad 0 \leq t \leq 7.
  \end{equation*}
  We treat each of these cases separately.
  \begin{enumerate}
    \item For $t = 0$ this follows from \eqref{eq:semiorthogonality-T_X_tilde}. For $1 \leq t \leq 6$
    this follows from Corollaries \ref{corollary:duals} and \ref{corollary:acyclicities} by replacing
    $\widetilde{T}_X$ with its semisimplification \eqref{eq:semisimplification-T_X_tilde}. For $t = 7$
    this follows from the case $t = 0$ by Serre duality, dualization of factors, and \eqref{eq:dual-T_X_tilde}.

    \item For $1 \leq t \leq 7$ this follows from Lemma \ref{lemma:tensor-products-F4-P1} and
    Corollaries \ref{corollary:duals}, \ref{corollary:acyclicities} by replacing $\widetilde{T}_X$
    with its semisimplification \eqref{eq:semisimplification-T_X_tilde}.

    Let us now assume $t = 0$. First we note that dualizing each factor and using Corollary \ref{corollary:duals} and \eqref{eq:dual-T_X_tilde} we have
    \begin{equation*}
      \Ext^\bullet(\widetilde{T}_X, \cU^{\omega_4}) \cong \Ext^\bullet(\cU^{\omega_4}, \widetilde{T}_X).
    \end{equation*}
    Thus, by Lemma \ref{lemma:monad-U} it is enough to prove
    \begin{equation*}
      \Ext^\bullet(\cU^{\omega_4}, \cO) = 0 , \quad \Ext^\bullet(\cU^{\omega_4}, \widetilde{T}_X(-1)) = 0 , \quad \Ext^\bullet(\cU^{\omega_4}, \cU^{2\omega_4}(-1)) = 0.
    \end{equation*}
    The first one follows from Lemma \ref{lemma:collection-O-and-U}. The second one
    follows from the case $t = 1$ we have just proved. Finally, the last one follows
    from Lemma \ref{lemma:tensor-products-F4-P1} and Corollaries \ref{corollary:duals}
    and \ref{corollary:acyclicities}.
    \end{enumerate}
\end{proof}

\begin{remark}
  We have seen in the proof of the above lemma that the bundles $\cU^{\omega_4}$ and $\widetilde{T}_X$ are completely
  orthogonal. This also hold for their twists, i.e. we have
  \begin{equation*}
    \Ext^\bullet(\widetilde{T}_X(t), \cU^{\omega_4}(t)) \cong \Ext^\bullet(\cU^{\omega_4}(t), \widetilde{T}_X(t)) = 0.
  \end{equation*}
\end{remark}

\section{Proof of fullness}
\label{section:fullness}

Here we prove that the collection \eqref{eq:collection-F4-P1} is full. The proof goes via restriction to some closed subvarieties of $X$, where a full exceptional collection is already known. In this respect the proof is similar to the proofs of fullness given in \cite{KS21, Ku08a}.

Let $\cD$ be the full triangulated subcategory of $\Db(X)$ generated by the exceptional
collection \eqref{eq:collection-F4-P1}, i.e. we have
\begin{equation}\label{eq:subcategory-D}
  \cD = \big\langle \cO, \cU^{\omega_4}, \widetilde{T}_X, \cO(1), \cU^{\omega_4}(1), \widetilde{T}_X(1), \dots,
  \cO(7), \cU^{\omega_4}(7), \widetilde{T}_X(7) \big\rangle \subset \Db(X).
\end{equation}
Since $\cD$ is generated by an exceptional collection, it is admissible. Therefore, there exists a semiorthogonal decomposition
\begin{equation*}
  \Db(X) = \langle \cD^\perp, \cD \rangle.
\end{equation*}
To prove fullness of \eqref{eq:collection-F4-P1} we need to prove $\cD^\perp = 0$.

\smallskip

This section is structured as follows. In Section~\ref{subsection:some-geometry} we
begin with some geometric facts that eventually allow us to cover $X$ by copies
of the even orthogonal Grassmannian $\OG(2,8)$ (see Lemma~\ref{lemma:zero-loci}).
These facts are also interesting in their own right. In Section~\ref{subsection:auxiliary-complexes} we collect some
auxiliary complexes that allow us to conclude that certain additional $\rG$-equivariant
vector bundles are already contained in the subcategory $\cD$ (see Corollary~\ref{corollary:containments}).
Finally, in Section~\ref{subsection:proof-of-fullness} we prove fullness by showing
in Proposition~\ref{proposition:fullness} that $\cD^\perp = 0$.

\subsection{Some geometry}
\label{subsection:some-geometry}

Since $X$ is the adjoint variety of a simple group $\rG$, there is a closed embedding
\begin{equation*}
  X \subset \bP(\mathfrak{g}),
\end{equation*}
identifying $X$ with the orbit of the highest root $[\Theta] \in \bP(\mathfrak{g})$,
where $\rG$ acts on its Lie algebra $\mathfrak{g}$ via the adjoint action.
Moreover, since we have an isomorphism of $\rG$-representations $\mathfrak{g} \cong V^{\omega_1}$, the above embedding becomes
\begin{equation*}
  X \subset \bP(V^{\omega_1}).
\end{equation*}

Recall from \cite[p.202--213]{Bourbaki} that we have the following formulas for roots of the root systems $\rF_4$, $\rB_4$ and $\rD_4$
\begin{equation*}
  \begin{aligned}
    & \rF_4 \colon \quad \pm e_i, \\
    & \rB_4 \colon \quad \pm e_i, \\
    & \rD_4 \colon
  \end{aligned}
  \quad
  \begin{aligned}
    & \pm e_i \pm e_j, \\
    & \pm e_i \pm e_j \\
    & \pm e_i \pm e_j
  \end{aligned}
  \quad
  \begin{aligned}
    & \frac{1}{2}(\pm e_1 \pm e_2 \pm e_3 \pm e_4) \\
    & \\
    & \\
  \end{aligned}
\end{equation*}
The roots for $\rF_4$ already appeared in \eqref{eq:roots-F4}.
From these formulas we clearly see the inclusions of root systems $\rD_4 \subset \rB_4 \subset \rF_4$,
that give rise to embeddings of the corresponding connected simply connected simple algebraic groups
of the corresponding Dynkin types
\begin{equation*}
  \bar{\rG} \subset \widetilde{\rG} \subset \rG.
\end{equation*}
Note that for all three groups we have a common maximal torus
\begin{equation}\label{eq:common-maximal-torus}
  \rT \subset \bar{\rG} \subset \widetilde{\rG} \subset \rG.
\end{equation}
We also have embeddings of homogeneous spaces
\begin{equation*}
  \bar{\rG}/\bar{\rP} \subset \widetilde{\rG}/\widetilde{\rP} \subset \rG/\rP,
\end{equation*}
where
\begin{equation*}
  \widetilde{\rP} = \rP \cap \widetilde{\rG} \quad \text{and} \quad \bar{\rP} = \rP \cap \bar{\rG}.
\end{equation*}
Since the highest root of $\rF_4$ is $\Theta = e_1 + e_2 = \omega_1$ and since its restriction
to $\rB_4$ (resp. $\rD_4$) is the second fundamental weight of $\rB_4$ (resp. $\rD_4$),
we have
\begin{equation*}
  \widetilde{\rP} = \widetilde{\rP}_2 \quad \text{and} \quad \bar{\rP} = \bar{\rP}_2,
\end{equation*}
where we use the Bourbaki labelling of vertices in Dynkin diagrams $\rB_4$ and $\rD_4$
\begin{equation*}
  \dynkin[edge length = 2em,labels*={1,...,4}, arrow width=2mm, scale=1.7]B{o*oo}
  \qquad \qquad \text{and} \qquad \qquad
  \dynkin[edge length = 2em,labels*={1,...,4}, scale=1.7]D{o*oo}
\end{equation*}
and the nodes depicted in solid black correspond to the parabolics $\widetilde{\rP}$ and $\bar{\rP}$ respectively.

For the remainder of this subsection we denote by $V_{\widetilde{\rG}}^\lambda$ (resp. $V_{\bar{\rG}}^\lambda$)
the irreducible representation of the group $\widetilde{\rG}$ (resp. $\bar{\rG}$) with the highest
weight $\lambda$. Similarly, we denote by $\widetilde{\cU}^\lambda$ (resp. $\bar{\cU}^\lambda$)
the $\widetilde{\rG}$-equivariant (resp. $\bar{\rG}$-equivariant) vector bundle on
$\widetilde{\rG}/\widetilde{\rP}$ (resp. $\bar{\rG}/\bar{\rP}$) defined by the irreducible representation
with the highest weight $\lambda$ of the Levi subgroup $\widetilde{\rL}$ (resp. $\bar{\rL}$) of the
parabolic $\widetilde{\rP}$ (resp. $\bar{\rP}$).

A simple computation with root systems (e.g. using \cite[p.202--213]{Bourbaki}) shows that the inclusions $\bar{\rG} \subset \widetilde{\rG} \subset \rG$ induce the following maps on the fundamental weights
\begin{equation}\label{eq:D4-B4-F4-maps-on-weights}
\begin{aligned}
  & \omega_1 \mapsto \tildeomega_2 \\
  & \omega_2 \mapsto \tildeomega_1 + \tildeomega_3 \\
  & \omega_3 \mapsto \tildeomega_1 + \tildeomega_4 \\
  & \omega_4 \mapsto \tildeomega_1
\end{aligned}
\qquad , \qquad
\begin{aligned}
  & \omega_1 \mapsto \baromega_2 \\
  & \omega_2 \mapsto \baromega_1 + \baromega_3 + \baromega_4 \\
  & \omega_3 \mapsto \baromega_1 + \baromega_4 \\
  & \omega_4 \mapsto \baromega_1
\end{aligned}
\qquad \text{and} \qquad
\begin{aligned}
  & \tildeomega_1 \mapsto \baromega_1 \\
  & \tildeomega_2 \mapsto \baromega_2 \\
  & \tildeomega_3 \mapsto \baromega_3 + \baromega_4 \\
  & \tildeomega_4 \mapsto \baromega_4
\end{aligned}
\qquad ,
\end{equation}
where we denote by $\tildeomega_i$ (resp. $\baromega_i$) the fundamental weights of $\widetilde{\rG}$ (resp. $\bar{\rG}$).

Let $V^{\lambda}$ be an irreducible highest weight representation of $\rG$. Then its restriction
$V^{\lambda}|_{\widetilde{\rG}}$ decomposes into a direct sum of irreducible representations of
$\widetilde{\rG}$. Moreover, the image of $\lambda$ under \eqref{eq:D4-B4-F4-maps-on-weights}
is the highest weight of one of the summands. The same of course holds for restrictions from
$\rG$ to $\bar{\rG}$ and from $\widetilde{\rG}$ to $\bar{\rG}$.
For example, this way immediately obtain inclusions
\begin{equation}\label{eq:inclusions-restricted-representations}
  V_{\bar{\rG}}^{\baromega_2} \subset V_{\widetilde{\rG}}^{\tildeomega_2} \subset V^{\omega_1}.
\end{equation}

\medskip

We now give a precise description of the restrictions of $V^{\omega_1}$ and $V^{\omega_4}$.
\begin{lemma}\label{lemma:restrictions-of-representations}
  \
  \begin{enumerate}
    \item The normalizer $N_{\rG}(\bar{\rG})$ is a semidirect product of $\bar{\rG}$ and
    the symmetric group $\mathrm{S}_3$, where the group $\mathrm{S}_3$ acts on $\bar{\rG}$ by
    the outer automorphisms that correspond to permutations of the nodes $1,3,4$ of
    the Dynkin diagram of type $\rD_4$.

    \smallskip

    \item Let $V$ be a finite dimensional representation of $\rG$ and $V|_{\bar{\rG}}$
    its restriction to $\bar{\rG}$.
    If $W \subset V|_{\bar{\rG}}$ is a $\bar{\rG}$-subrepresentation, then for any
    $g \in N_{\rG}(\bar{\rG})$ the subspace $g(W) \subset V|_{\bar{\rG}}$ is also
    a $\bar{\rG}$-subrepresentation. Moreover, $W$ and $g(W)$ differ by the automorphism
    of $\bar{\rG}$ given by the conjugation with $g$.

    \smallskip

    \item If $V^{\lambda}|_{\bar{\rG}}$ has $V_{\bar{\rG}}^{\baromega_i}$ as a direct
    summand for some $i \in \{1,3,4\}$, then it contains $V_{\bar{\rG}}^{\baromega_i}$
    for all $i \in \{1,3,4\}$ as direct summands.

    \smallskip

    \item
    We have
    \begin{equation*}
      \begin{aligned}
        V^{\omega_1}|_{\widetilde{\rG}} & = V_{\widetilde{\rG}}^{\tildeomega_2} \oplus V_{\widetilde{\rG}}^{\tildeomega_4} \\
        V^{\omega_4}|_{\widetilde{\rG}} & = V_{\widetilde{\rG}}^{\tildeomega_1} \oplus V_{\widetilde{\rG}}^{\tildeomega_4} \oplus \Bbbk
      \end{aligned}
      \hspace{50pt}
      \begin{aligned}
        V^{\omega_1}|_{\bar{\rG}} & = V_{\bar{\rG}}^{\baromega_2}
        \oplus V_{\bar{\rG}}^{\baromega_1} \oplus V_{\bar{\rG}}^{\baromega_3} \oplus V_{\bar{\rG}}^{\baromega_4} \\
        V^{\omega_4}|_{\bar{\rG}} & = V_{\bar{\rG}}^{\baromega_1} \oplus V_{\bar{\rG}}^{\baromega_3} \oplus V_{\bar{\rG}}^{\baromega_4} \oplus \Bbbk^2
      \end{aligned}
    \end{equation*}
  \end{enumerate}
\end{lemma}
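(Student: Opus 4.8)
The plan is to prove the four statements essentially in the order given, since each feeds into the next. For part (1), I would recall the standard fact that the outer automorphism group of the Dynkin diagram $\rD_4$ is $\mathrm{S}_3$ (triality), acting by permuting the three outer legs $1,3,4$, and that for a simply connected simple group these diagram automorphisms lift to honest automorphisms of $\bar{\rG}$. To see that all of $\mathrm{S}_3$ is realized inside $\rG = \rG(\rF_4)$ rather than just as abstract automorphisms, I would use the embedding $\bar{\rG} \subset \rG$ of the excerpt together with the fact that the folding of $\rD_4$ by $\mathrm{S}_3$ gives exactly the root system $\rG_2$, and $\rF_4$ contains a subsystem of type $\rD_4$ whose normalizer realizes the full triality; concretely one exhibits elements of the Weyl group $\rW(\rF_4)$ (which by \eqref{eq:longest-element-F4} and \eqref{eq:roots-F4} contains all sign changes and coordinate permutations up to the half-sum relations) permuting the simple roots $\alpha$ of $\bar{\rG}$ appropriately, normalizing $\rT$, hence normalizing $\bar{\rG}$. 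That $N_{\rG}(\bar{\rG})$ is exactly the semidirect product, i.e. the quotient $N_{\rG}(\bar{\rG})/\bar{\rG}$ is no larger than $\mathrm{S}_3$, follows because this quotient acts faithfully on the based root datum of $\bar{\rG}$ (any element centralizing $\bar{\rG}$ would lie in the connected centralizer, which for a subgroup of full rank is just $Z(\bar{\rG})$, absorbed into $\bar{\rG}$).

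Part (2) is then formal: if $g \in N_{\rG}(\bar{\rG})$ and $W \subset V|_{\bar{\rG}}$ is stable under $\bar{\rG}$, then for $h \in \bar{\rG}$ we have $h \cdot g(W) = g \cdot (g^{-1} h g)(W) = g(W)$ since $g^{-1}hg \in \bar{\rG}$; and the $\bar{\rG}$-module structure on $g(W)$ is visibly the twist of that on $W$ by the conjugation automorphism $\mathrm{Ad}(g)$ of $\bar{\rG}$. Part (3) follows by combining (1) and (2): the three representations $V_{\bar{\rG}}^{\baromega_i}$ for $i \in \{1,3,4\}$ are permuted transitively by the $\mathrm{S}_3 \subset N_{\rG}(\bar{\rG})$-action on isomorphism classes of $\bar{\rG}$-subrepresentations of $V^{\lambda}|_{\bar{\rG}}$ (since triality permutes the corresponding highest weights and $V^\lambda$ is $N_{\rG}(\bar{\rG})$-stable as a $\rG$-module, hence its isotypic decomposition over $\bar{\rG}$ is permuted accordingly); so if one of them occurs, all three do.

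For part (4), I would combine the branching data with a dimension count. The maps on fundamental weights are given in \eqref{eq:D4-B4-F4-maps-on-weights}, so I know one highest weight in each restriction; the remaining summands are constrained by: (a) the restriction of a self-dual $\rG$-module is self-dual; (b) weights of summands must be weights of $V^\lambda$ restricted via \eqref{eq:D4-B4-F4-maps-on-weights}, in particular $\leq$ the leading weight in dominance order; (c) $\dim V^{\omega_1} = \dim \mathfrak{g} = 52$ and $\dim V^{\omega_4} = 26$ for $\rF_4$, while for $\rB_4 = \mathfrak{so}_9$ one has $\dim V_{\widetilde{\rG}}^{\tildeomega_1} = 9$, $\dim V_{\widetilde{\rG}}^{\tildeomega_2} = 36$, $\dim V_{\widetilde{\rG}}^{\tildeomega_4} = 16$ (spin), and for $\rD_4 = \mathfrak{so}_8$ one has $\dim V_{\bar{\rG}}^{\baromega_i} = 8$ for $i \in \{1,3,4\}$ and $\dim V_{\bar{\rG}}^{\baromega_2} = 28$. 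Then $52 = 36 + 16$ forces $V^{\omega_1}|_{\widetilde{\rG}} = V_{\widetilde{\rG}}^{\tildeomega_2} \oplus V_{\widetilde{\rG}}^{\tildeomega_4}$, and restricting further via the $\widetilde\omega \mapsto \baromega$ column (noting $\tildeomega_2 \mapsto \baromega_2$ and that the $16$-dimensional spin module of $\mathfrak{so}_9$ restricts to $V_{\bar{\rG}}^{\baromega_3} \oplus V_{\bar{\rG}}^{\baromega_4}$, whence part (3) pulls in $V_{\bar{\rG}}^{\baromega_1}$ too, giving $28 + 8 + 8 + 8 = 52$) yields the claimed $V^{\omega_1}|_{\bar{\rG}}$. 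For $V^{\omega_4}$, the leading summand is $V_{\widetilde{\rG}}^{\tildeomega_1}$ of dimension $9$, and $26 - 9 = 17 = 16 + 1$ forces the remaining part to be $V_{\widetilde{\rG}}^{\tildeomega_4} \oplus \Bbbk$; restricting to $\bar{\rG}$ gives $V_{\bar{\rG}}^{\baromega_1} \oplus (V_{\bar{\rG}}^{\baromega_3} \oplus V_{\bar{\rG}}^{\baromega_4}) \oplus \Bbbk \oplus \Bbbk$, but one must check via (3) that no spurious extra copies appear — the dimension $8+8+8+1+1 = 26$ is already exhausted, so we are done. The main obstacle I anticipate is part (1): pinning down $N_{\rG}(\bar{\rG})$ precisely, i.e. verifying that the full $\mathrm{S}_3$ (and not merely $\mathrm{S}_2$ or the cyclic $\mathrm{C}_3$) is realized by elements of $\rG$ normalizing $\bar{\rG}$, and that nothing larger occurs; the branching computations in (4), by contrast, are routine once the dimensions and \eqref{eq:D4-B4-F4-maps-on-weights} are in hand.
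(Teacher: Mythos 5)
Your parts (2) and (3) match the paper's argument. For part (1) you take a different route: you propose to exhibit explicit Weyl-group elements of $\rF_4$ permuting the simple roots of the $\rD_4$-subsystem, and separately bound the quotient via faithfulness of its action on the based root datum. The paper's argument is slicker and resolves both directions at once: since all maximal tori in $\bar{\rG}$ are $\bar{\rG}$-conjugate, every coset of $N_{\rG}(\bar{\rG})/\bar{\rG}$ has a representative in $N_{\rG}(\rT)$, uniquely up to $N_{\bar{\rG}}(\rT)$; hence
\begin{equation*}
  N_{\rG}(\bar{\rG})/\bar{\rG} \;\cong\; N_{\rG}(\rT)/N_{\bar{\rG}}(\rT) \;\cong\; \rW_{\rG}/\rW_{\bar{\rG}},
\end{equation*}
and the fact that $\rW_{\rF_4}$ is a semidirect product of $\rW_{\rD_4}$ with $\mathrm{S}_3$ acting by triality is quoted directly from Bourbaki. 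This avoids both the explicit construction and the separate upper-bound argument, and it proves the statement as an equality rather than in two halves.

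For part (4) there is a genuine gap. You write that ``$52 = 36 + 16$ forces $V^{\omega_1}|_{\widetilde{\rG}} = V_{\widetilde{\rG}}^{\tildeomega_2} \oplus V_{\widetilde{\rG}}^{\tildeomega_4}$'' and ``$26 - 9 = 17 = 16 + 1$ forces the remaining part to be $V_{\widetilde{\rG}}^{\tildeomega_4} \oplus \Bbbk$,'' but a dimension count does not force either: the $16$-dimensional complement could a priori contain trivial summands (so that, e.g., a $V^{\tildeomega_1}$ of dimension $9$ plus several copies of $\Bbbk$ fill out $16$), and $17$ could a priori be realized as $9 + 8$ or as $17$ trivials. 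The constraints you invoke --- self-duality and the fact that summand weights must be among the restricted weights of $V^\lambda$ --- do not close this loophole, since $0$ is always such a weight. The paper pins the decomposition down with a different, decisive input: the multiplicity of the zero weight. For $V^{\omega_1} = \mathfrak{g}$ the zero-weight multiplicity is $\rk\,\rG = 4$, and the same holds for $V_{\widetilde{\rG}}^{\tildeomega_2} = \widetilde{\mathfrak{g}}$; so the complementary $16$-dimensional piece has \emph{no} zero weight, which excludes both $\Bbbk$ and $V^{\tildeomega_1}$ (the vector representation of $\mathfrak{so}_9$ also has a zero weight), leaving $V^{\tildeomega_4}$ as the unique option. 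Similarly, the zero-weight multiplicity $2$ of $V^{\omega_4}$ accounts for exactly one copy of $\Bbbk$ beyond the zero weight of $V^{\tildeomega_1}$. You would need to add this weight-multiplicity count (or an equivalent argument, such as computing $\widetilde{\rG}$-invariants directly) to make the claimed ``forcings'' correct.
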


\begin{proof}
  (1) Since all maximal tori in semisimple groups are conjugate, every coset
  in $N_{\rG}(\bar{\rG})/\bar{\rG}$ has a representative from $N_{\rG}(\rT)$.
  Moreover, every two such representatives differ by an element of $N_{\bar{\rG}}(\rT)$.
  Therefore, we obtain isomorphisms of groups
  \begin{equation*}
    N_{\rG}(\bar{\rG})/\bar{\rG} \cong N_{\rG}(\rT) / N_{\bar{\rG}}(\rT) \cong \rW_{\rG}/\rW_{\bar{\rG}}.
  \end{equation*}
  By \cite[p.~213]{Bourbaki} we know that $\rW_{\rG}/\rW_{\bar{\rG}} \cong \mathrm{S}_3$,
  and that $\rW_{\rG}$ is a semidirect product of $\rW_{\bar{\rG}}$ and the symmetric group
  $\mathrm{S}_3$, where $\mathrm{S}_3$ acts on $\rW_{\bar{\rG}}$ by the outer automorphisms
  that correspond to permutations of the nodes $1,3,4$ of the Dynkin diagram of type $\rD_4$.
  This imlies the claim.

  \medskip

  (2) This follows immediately from definitions.

  \medskip

  (3) This follows from Parts (1) and (2). Indeed, let $g \in \rG$ be an element
  of $\mathrm{S}_3 \subset N_{\rG}(\bar{\rG})$ as in Part (1). Then by Part (2)
  we have $g(V_{\bar{\rG}}^{\baromega_i}) = V_{\bar{\rG}}^{\sigma(\baromega_{i})}$,
  where $\sigma$ is the permutation of the set $\{ \baromega_1, \baromega_3, \baromega_4 \}$
  induced by $g$. This way we can obtain $V_{\bar{\rG}}^{\baromega_i}$ for all $i \in \{1,3,4\}$.

  \medskip

  (4) Let us consider the restriction $V^{\omega_1}|_{\widetilde{\rG}}$.
  Note that under \eqref{eq:D4-B4-F4-maps-on-weights} the weight $\omega_1$ maps
  to $\tildeomega_2$. Hence, we have
  \begin{equation*}
    V^{\omega_1}|_{\widetilde{\rG}} = V_{\widetilde{\rG}}^{\tildeomega_2} \oplus W \, ,
  \end{equation*}
  where $W$ is a representation of $\widetilde{\rG}$ of dimension $\dim W = 16$,
  as we have $\dim V^{\omega_1} = 52$ and $\dim V_{\widetilde{\rG}}^{\tildeomega_2} = 36$.
  Since we have $\rT \subset \widetilde{\rG} \subset \rG$, the weights of $W$ with
  respect to $\rT$ are a subset of the weights of $V^{\omega_1}$ with respect to $\rT$.
  As the multiplicity of the zero weight of a simple Lie algebra is equal to its rank,
  we obtain that the multiplicities of the zero weight in $V^{\omega_1} = \mathfrak{g}$
  and in $V_{\widetilde{\rG}}^{\tildeomega_2} = \widetilde{\mathfrak{g}}$ are both
  equal to $4$. This implies that the multiplicity of the zero weight in $W$ must
  be zero.
  Now we note that the only irreducible representations of $\widetilde{\rG}$ of
  dimension smaller or equal to $16$ are
  $\Bbbk, V_{\widetilde{\rG}}^{\tildeomega_1}, V_{\widetilde{\rG}}^{\tildeomega_4}$,
  whose dimensions are $1,9,16$ respectively. Since $W$ has no zero weight space,
  its decomposition into irreducibles cannot involve the trivial representation $\Bbbk$.
  Hence, for dimension reasons we obtain $W = V_{\widetilde{\rG}}^{\tildeomega_4}$.

  Similarly one can prove the claim for $V^{\omega_4}|_{\widetilde{\rG}}$
  using that the multiplicity of the zero weight in $V^{\omega_4}$ is $2$.

  To obtain restrictions $V^{\omega_1}|_{\bar{\rG}}$ and $V^{\omega_4}|_{\bar{\rG}}$
  we use the restrictions $V^{\omega_1}|_{\widetilde{\rG}}$ and $V^{\omega_4}|_{\widetilde{\rG}}$,
  \eqref{eq:D4-B4-F4-maps-on-weights} and Part (3) of this lemma.
\end{proof}

\begin{lemma}\label{lemma:B4-and-D4-as-linear-sections}
  The subvarieties $\widetilde{\rG}/\widetilde{\rP} \subset \rG/\rP$ and $\bar{\rG}/\bar{\rP} \subset \widetilde{\rG}/\widetilde{\rP}$ are linear sections in their natural projective embeddings $\rG/\rP \subset \bP(V^{\omega_1})$
  and $\widetilde{\rG}/\widetilde{\rP} \subset \bP(V_{\widetilde{\rG}}^{\tildeomega_2})$.
  More precisely, we have
  \begin{equation*}
    \widetilde{\rG}/\widetilde{\rP} = \rG/\rP \underset{\bP(V^{\omega_1})}{\times} \bP(V_{\widetilde{\rG}}^{\tildeomega_2})
    \hspace{30pt} \text{and} \hspace{30pt}
    \bar{\rG}/\bar{\rP} = \widetilde{\rG}/\widetilde{\rP} \underset{\bP(V_{\widetilde{\rG}}^{\tildeomega_2})}{\times} \bP(V_{\bar{\rG}}^{\baromega_2}),
  \end{equation*}
  where the inclusions $V_{\widetilde{\rG}}^{\tildeomega_2} \subset V^{\omega_1}$ and
  $V_{\bar{\rG}}^{\baromega_2} \subset V_{\widetilde{\rG}}^{\tildeomega_2}$ are given
  by \eqref{eq:inclusions-restricted-representations}.
\end{lemma}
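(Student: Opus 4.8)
The plan is to realise both sides as projectivisations of minimal nilpotent orbit closures and to compare them directly. Since the two asserted equalities have exactly the same shape, I would prove the first one and then repeat the argument verbatim with $(\rG,\rP,V^{\omega_1})$ replaced by $(\widetilde{\rG},\widetilde{\rP},V_{\widetilde{\rG}}^{\tildeomega_2})$. Using \eqref{eq:inclusions-restricted-representations} and Lemma~\ref{lemma:restrictions-of-representations}(4) to identify $V_{\widetilde{\rG}}^{\tildeomega_2}$ with the Lie subalgebra $\widetilde{\mathfrak g}=\Lie(\widetilde{\rG})\subset\mathfrak g=V^{\omega_1}$, the fibre product in the statement is the scheme-theoretic intersection $Z:=X\cap\bP(\widetilde{\mathfrak g})$, and one must show $Z=\widetilde{\rG}/\widetilde{\rP}$. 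One inclusion is formal: the embeddings of homogeneous spaces recalled above identify $\widetilde{\rG}/\widetilde{\rP}$ with the $\widetilde{\rG}$-orbit of the highest root line $[e_\Theta]\in\bP(\widetilde{\mathfrak g})$, which lies in $X$; since $\widetilde{\rG}/\widetilde{\rP}$ is reduced this yields a closed immersion $\widetilde{\rG}/\widetilde{\rP}\hookrightarrow Z$.

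For the reverse inclusion on the level of sets I would pass to affine cones. The cone over $X$ is the closure $\overline{\cO_{\min}(\mathfrak g)}$ of the minimal nilpotent orbit, and likewise the cone over $\widetilde{\rG}/\widetilde{\rP}$ is $\overline{\cO_{\min}(\widetilde{\mathfrak g})}$. By \eqref{eq:D4-B4-F4-maps-on-weights} the restriction of $\Theta$ to $\rT$ is the highest root of $\widetilde{\rG}$, so a highest root vector $e_\Theta$ of $\mathfrak g$ is also a highest root vector of $\widetilde{\mathfrak g}$. Now let $0\neq v\in\overline{\cO_{\min}(\mathfrak g)}\cap\widetilde{\mathfrak g}$, so $v\in\cO_{\min}(\mathfrak g)$ and hence $(\operatorname{ad}_{\mathfrak g}v)^2$ has rank $1$. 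Since $\widetilde{\mathfrak g}$ is a subalgebra, $(\operatorname{ad}_{\widetilde{\mathfrak g}}v)^2$ is the restriction of $(\operatorname{ad}_{\mathfrak g}v)^2$ to $\widetilde{\mathfrak g}$, so it has rank at most $1$, and at least $1$ because $v\neq0$; by the standard characterisation of the minimal nilpotent orbit — a nonzero nilpotent element $x$ lies in $\cO_{\min}$ if and only if $\operatorname{rk}(\operatorname{ad}x)^2=1$ — we conclude $v\in\cO_{\min}(\widetilde{\mathfrak g})$. Thus $Z$ and $\widetilde{\rG}/\widetilde{\rP}$ have the same underlying set.

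It remains to see that $Z$ is reduced. As $\widetilde{\rG}/\widetilde{\rP}$ is a single $\widetilde{\rG}$-orbit and $Z$ is $\widetilde{\rG}$-stable with $Z_{\mathrm{red}}=\widetilde{\rG}/\widetilde{\rP}$, the non-reduced locus of $Z$ is a $\widetilde{\rG}$-stable closed subset of this orbit, hence empty once $Z$ is smooth at a single point; I would verify this at $[e_\Theta]$. For a scheme-theoretic intersection inside the smooth ambient $\bP(\mathfrak g)$ one has $T_{[e_\Theta]}Z=T_{[e_\Theta]}X\cap T_{[e_\Theta]}\bP(\widetilde{\mathfrak g})$ inside $T_{[e_\Theta]}\bP(\mathfrak g)=\mathfrak g/\Bbbk e_\Theta$. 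Grading $\mathfrak g$ by the eigenvalues of $\operatorname{ad}(h_\Theta)$ (so that $\dim\mathfrak g_{\pm2}=1$), a short computation gives $T_{[e_\Theta]}X=[\mathfrak g,e_\Theta]/\Bbbk e_\Theta=(\Bbbk h_\Theta\oplus\mathfrak g_1\oplus\mathfrak g_2)/\Bbbk e_\Theta$; since $h_\Theta\in\widetilde{\mathfrak g}$ this grading restricts to one of $\widetilde{\mathfrak g}$, and intersecting graded pieces shows $T_{[e_\Theta]}Z=(\Bbbk h_\Theta\oplus\widetilde{\mathfrak g}_1\oplus\widetilde{\mathfrak g}_2)/\Bbbk e_\Theta$, which is exactly $T_{[e_\Theta]}(\widetilde{\rG}/\widetilde{\rP})$ by the same computation performed inside $\widetilde{\mathfrak g}$. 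Hence $\dim T_{[e_\Theta]}Z=\dim\widetilde{\rG}/\widetilde{\rP}=\dim Z$, so $Z$ is smooth, hence reduced, at $[e_\Theta]$, and therefore $Z=\widetilde{\rG}/\widetilde{\rP}$.

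I expect the set-theoretic reverse inclusion to be the crux: one has to rule out that a nonzero nilpotent of $\widetilde{\mathfrak g}$ lying in the minimal orbit of the larger algebra $\mathfrak g$ could escape the minimal orbit of $\widetilde{\mathfrak g}$, and this rests on the precise description of the minimal nilpotent orbit together with the (easy) root-system check that $\Theta$ restricts to the highest root of both $\widetilde{\rG}$ and $\bar{\rG}$. Once that is in place, the embedding $\widetilde{\rG}/\widetilde{\rP}\hookrightarrow Z$ is formal and reducedness is a short, uniform tangent-space computation made possible by the homogeneity of the subvariety.
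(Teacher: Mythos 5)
Your argument is correct, and it takes a genuinely different route from the paper. You realise both sides as projectivised minimal nilpotent orbit closures and prove the set-theoretic equality via the $\mathfrak{sl}_2$-theoretic characterisation that a nonzero nilpotent $x$ lies in $\cO_{\min}$ if and only if $\rk(\operatorname{ad}x)^2 = 1$, then settle the scheme structure by a tangent-space computation at $[e_\Theta]$ made uniform by homogeneity. The paper instead observes that $\rG/\widetilde{\rG}$ (resp.\ $\widetilde{\rG}/\bar\rG$) is a symmetric space, so $\widetilde{\rG}$ is the fixed subgroup of an \emph{inner} involution $\tau\in\rT$ (using that $\rF_4$ has no outer automorphisms); by a theorem of Richardson $\widetilde{\rG}/\widetilde{\rP}$ is a component of $(\rG/\rP)^{\tau}$, equality is forced by counting $\rT$-fixed points ($24=24$), and then $(\rG/\rP)^{\tau}=\rG/\rP\cap\bP(\mathfrak g)^{\tau}=\rG/\rP\cap(\bP(\mathfrak g_0)\sqcup\bP(\mathfrak g_1))$ yields the linear-section statement with the scheme structure coming for free from smoothness of fixed loci of finite-order automorphisms in characteristic zero. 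What your approach buys is self-containment: you avoid the classification of symmetric subgroups, Richardson's theorem, and the torus-fixed-point count, replacing them with Jacobson--Morozov and the $h_\Theta$-grading, which makes the reverse set-theoretic inclusion transparent. The cost is that you must quote the rank-one characterisation of $\cO_{\min}$ (true, but worth a reference --- it follows from the $\mathfrak{sl}_2$-decomposition $\mathfrak g = V_2\oplus V_1^{\oplus m}\oplus V_0^{\oplus n}$, and equivalently from the description of $\overline{\cO_{\min}}$ by the quadratic Joseph equations) and must do the tangent-space computation by hand, whereas the paper gets reducedness essentially for free.

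Two small points to tighten. First, in "at least $1$ because $v\neq 0$" you should say why: $v$ is nilpotent in $\widetilde{\mathfrak g}$ (the restriction of a nilpotent operator is nilpotent), so Jacobson--Morozov in $\widetilde{\mathfrak g}$ produces $y$ with $(\operatorname{ad}_{\widetilde{\mathfrak g}}v)^2 y = -2v\neq 0$. Second, since you invoke the rank-one criterion as "standard", it would be good to either cite a source or record the one-line derivation from the $\mathfrak{sl}_2$-module decomposition, as this inequality is the load-bearing step of the set-theoretic part.

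Finally, your opening sentence promises to run the argument "verbatim" for the second containment $\bar{\rG}/\bar{\rP}\subset\widetilde{\rG}/\widetilde{\rP}$; this does work because by \eqref{eq:D4-B4-F4-maps-on-weights} the highest root of $\widetilde{\mathfrak g}$ restricts to the highest root of $\bar{\mathfrak g}$ and $V_{\bar{\rG}}^{\baromega_2}=\bar{\mathfrak g}\subset\widetilde{\mathfrak g}=V_{\widetilde{\rG}}^{\tildeomega_2}$ is a Lie subalgebra, but it is worth stating these two facts explicitly rather than leaving them to "verbatim", since they are exactly the hypotheses that made the first case go through.
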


\begin{proof}
  Let us first consider the case $\widetilde{\rG}/\widetilde{\rP} \subset \rG/\rP$. From the classification
  of symmetric spaces (see for example \cite[p.~152]{Timashev}) we see that $\rG/\widetilde{\rG}$ is a
  symmetric space. Hence, $\widetilde{\rG}$ is a fixed-point subgroup of an involution on $\rG$. Its differential induces a direct sum decomposition of the Lie algebra of $\rG$ into eigenspaces
  \begin{equation}\label{eq:eigenspace-decomposition}
    \mathfrak{g} = \mathfrak{g}_{0} \oplus \mathfrak{g}_{1} \quad \text{with} \quad \mathfrak{g}_{0} = \Lie(\widetilde{\rG}).
  \end{equation}

  Since the Dynkin diagram of type $\rF_4$ has no automorphisms,
  the group $\rG$ has no outer automorphisms (see \cite[Theorem~27.4 and (A.8)]{Humphreys}).
  Hence, this involution is a conjugation by an element $\tau \in \rG$ of order $2$.
  Moreover, since the involution fixes $\widetilde{\rG}$, it also fixes the maximal
  torus $\rT \subset \widetilde{\rG} \subset \rG$ (see \eqref{eq:common-maximal-torus}).
  Since $\centralizer_{\rG}(\rT) = \rT$, we obtain $\tau \in \rT$.

  Since $\rT \subset \rP$, the multiplications by $\tau$ and $\tau^{-1}$ preserve
  the parabolic $\rP$ and, therefore, the induced action on $\rG/\rP$
  is given by left multiplication with $\tau$. Applying \cite[Theorem A]{Richardson}
  we see that $\widetilde{\rG}/\widetilde{\rP} \subset \rG/\rP$ is a connected component
  of the fixed locus $\left( \rG/\rP \right)^\tau$.

  The action of the maximal torus $\rT$ on $\rG/\rP$ preserves $\left( \rG/\rP \right)^\tau$,
  since multiplication by $\tau$ commutes with multiplication by any element of $\rT$. Hence,
  we have inclusions
  \begin{equation*}
    \widetilde{\rG}/\widetilde{\rP} \subset \left( \rG/\rP \right)^\tau \subset \rG/\rP
  \end{equation*}
  and the maximal torus $\rT$ acts by left multiplication on these spaces in a compatible way.
  Hence, we also have inclusions on the sets of $\rT$-fixed points
  \begin{equation*}
    \left( \widetilde{\rG}/\widetilde{\rP} \right)^{\rT} \subset \Big( \left( \rG/\rP \right)^\tau  \Big)^{\rT} \subset \Big( \rG/\rP  \Big)^{\rT}.
  \end{equation*}
  The cardinalities of $\left( \widetilde{\rG}/\widetilde{\rP} \right)^{\rT}$ and
  $\Big( \rG/\rP  \Big)^{\rT}$ are equal to the number of Schubert cells in the respective
  varieties and we easily compute
  \begin{equation*}
    \frac{|\rW_{\rG}|}{|\rW_{\rL}|} = \frac{1152}{48} = 24 \quad \text{and} \quad
    \frac{|\rW_{\widetilde{\rG}}|}{|\rW_{\widetilde{\rL}}|} = \frac{384}{2 \cdot 16} = 24,
  \end{equation*}
  where the sizes of the Weyl groups can be found in \cite{Bourbaki}, for example.

  Thus, we have equalities
  \begin{equation*}
    \left( \widetilde{\rG}/\widetilde{\rP} \right)^{\rT} = \Big( \left( \rG/\rP \right)^\tau  \Big)^{\rT} = \Big( \rG/\rP  \Big)^{\rT}.
  \end{equation*}
  Therefore, since we already know that $\widetilde{\rG}/\widetilde{\rP}$ is a connected component of $\left( \rG/\rP \right)^\tau$,
  we obtain that $\left( \rG/\rP \right)^\tau$ is connected and we have the equality
  \begin{equation*}
    \widetilde{\rG}/\widetilde{\rP} = \left( \rG/\rP \right)^\tau.
  \end{equation*}

  Consider now the $\rG$-equivariant embedding
  \begin{equation*}
    \rG/\rP \subset \bP(\mathfrak{g})
  \end{equation*}
  and note that we have the equality
  \begin{equation*}
    \left( \rG/\rP \right)^\tau = \rG/\rP \underset{\bP(\mathfrak{g})}{\times} \bP(\mathfrak{g})^\tau
  \end{equation*}
  as a scheme-theoretic intersection. Using \eqref{eq:eigenspace-decomposition} we see that there is a disjoint union decomposition
  \begin{equation*}
    \bP(\mathfrak{g})^\tau = \bP(\mathfrak{g}_{0}) \sqcup \bP(\mathfrak{g}_{1}).
  \end{equation*}
  Hence, we obtain
  \begin{equation*}
    \widetilde{\rG}/\widetilde{\rP} = \rG/\rP \underset{\bP(\mathfrak{g})}{\times} \bP(\mathfrak{g}_0)
    \quad \text{and} \quad
    \rG/\rP \underset{\bP(\mathfrak{g})}{\times} \bP(\mathfrak{g}_1) = \emptyset.
  \end{equation*}
  Since we have $\mathfrak{g}_0 = V_{\widetilde{\rG}}^{\tildeomega_2}$, the first claim of the lemma is proved.

  \medskip

  A very similar argument allows to prove the second claim, since $\widetilde{\rG}/\bar{\rG} \cong \SO_9/\SO_8$ is again a symmetric space (see \cite[p.~152]{Timashev}). We omit the details.
\end{proof}

\begin{corollary}\label{corollary:D4-as-linear-sections}
  We have
  \begin{equation*}
    \bar{\rG}/\bar{\rP} = \rG/\rP \underset{\bP(V^{\omega_1})}{\times} \bP(V_{\bar{\rG}}^{\baromega_2}),
  \end{equation*}
  where the inclusion $V_{\bar{\rG}}^{\baromega_2} \subset V^{\omega_1}$ is given
  by \eqref{eq:inclusions-restricted-representations}.
\end{corollary}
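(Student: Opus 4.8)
The plan is to obtain this by simply chaining together the two fiber-product descriptions furnished by Lemma~\ref{lemma:B4-and-D4-as-linear-sections}. First I would observe that \eqref{eq:inclusions-restricted-representations} presents the inclusion $V_{\bar{\rG}}^{\baromega_2} \subset V^{\omega_1}$ as the composite of the two linear embeddings $V_{\bar{\rG}}^{\baromega_2} \subset V_{\widetilde{\rG}}^{\tildeomega_2}$ and $V_{\widetilde{\rG}}^{\tildeomega_2} \subset V^{\omega_1}$; in particular the induced closed embeddings of projective spaces
\begin{equation*}
  \bP(V_{\bar{\rG}}^{\baromega_2}) \hookrightarrow \bP(V_{\widetilde{\rG}}^{\tildeomega_2}) \hookrightarrow \bP(V^{\omega_1})
\end{equation*}
compose to the embedding $\bP(V_{\bar{\rG}}^{\baromega_2}) \hookrightarrow \bP(V^{\omega_1})$ appearing in the statement.

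Then I would substitute the first identity of Lemma~\ref{lemma:B4-and-D4-as-linear-sections} into the second and use transitivity of fiber products:
\begin{align*}
  \bar{\rG}/\bar{\rP}
  &= \widetilde{\rG}/\widetilde{\rP} \underset{\bP(V_{\widetilde{\rG}}^{\tildeomega_2})}{\times} \bP(V_{\bar{\rG}}^{\baromega_2}) \\
  &= \left( \rG/\rP \underset{\bP(V^{\omega_1})}{\times} \bP(V_{\widetilde{\rG}}^{\tildeomega_2}) \right) \underset{\bP(V_{\widetilde{\rG}}^{\tildeomega_2})}{\times} \bP(V_{\bar{\rG}}^{\baromega_2}) \\
  &= \rG/\rP \underset{\bP(V^{\omega_1})}{\times} \bP(V_{\bar{\rG}}^{\baromega_2}),
\end{align*}
where the last step applies the standard fact that for a chain of morphisms $Z \to Y \to T$ and any morphism $W \to T$ one has $(W \times_T Y) \times_Y Z \cong W \times_T Z$, here with $W = \rG/\rP$, $T = \bP(V^{\omega_1})$, $Y = \bP(V_{\widetilde{\rG}}^{\tildeomega_2})$, $Z = \bP(V_{\bar{\rG}}^{\baromega_2})$, all intersections being scheme-theoretic.

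There is essentially no obstacle in this argument: the content is entirely carried by Lemma~\ref{lemma:B4-and-D4-as-linear-sections}, and the only point deserving even a line of justification is the compatibility of the three ambient projective spaces, which is recorded precisely by \eqref{eq:inclusions-restricted-representations}. Thus the corollary is a formal consequence of the lemma.
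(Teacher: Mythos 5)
Your argument is correct and is precisely the (implicit) proof the paper has in mind: the corollary is stated with no proof because it is a formal consequence of Lemma~\ref{lemma:B4-and-D4-as-linear-sections} by exactly the transitivity-of-fiber-products chain you wrote out, with the compatibility of the ambient projective spaces supplied by \eqref{eq:inclusions-restricted-representations}.
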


Our next goal is to prove that $\bar{\rG}/\bar{\rP} \subset \rG/\rP$ is the zero locus of a regular section of $\cU^{\omega_4}$.

\begin{lemma}\label{lemma:zero-loci}
  \
  \begin{enumerate}
    \item[(i)] Up to a scalar multiple there exists a unique non-zero global section $s_0 \in V^{\omega_4}$ of the vector bundle $\cU^{\omega_4}$ such that
    \begin{equation*}
      Z(s_0) = \widetilde{\rG}/\widetilde{\rP}.
    \end{equation*}

    \item[(ii)] There exist a one parameter family of global sections $s_t \in V^{\omega_4}$ of the vector bundle $\cU^{\omega_4}$ such that
    \begin{equation*}
      Z(s_t) = \bar{\rG}/\bar{\rP}.
    \end{equation*}
    In particular, these global sections are regular.
  \end{enumerate}
\end{lemma}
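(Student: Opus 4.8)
The plan is, in both parts, to identify the distinguished sections inside $H^0(X,\cU^{\omega_4})$ representation-theoretically, to deduce one inclusion from a computation in the fibre over the base point $[\rP]$, and then to establish the reverse inclusion (the delicate step). By Borel--Weil--Bott the weight $\omega_4$ is $\rG$-dominant, so $H^0(X,\cU^{\omega_4})=V^{\omega_4}$, higher cohomology vanishes, and $\cU^{\omega_4}$ is globally generated with evaluation at $[\rP]$ the $\rP$-equivariant surjection $V^{\omega_4}\twoheadrightarrow(\cU^{\omega_4})_{[\rP]}=V_\rL^{\omega_4}$ onto the irreducible quotient. For part~(i) I let $s_0$ span the unique trivial $\widetilde{\rG}$-summand of $V^{\omega_4}|_{\widetilde{\rG}}$ furnished by Lemma~\ref{lemma:restrictions-of-representations}(4); it is determined up to scalar, which already gives the uniqueness in~(i), and as a section it is $\widetilde{\rG}$-invariant. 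For part~(ii), Lemma~\ref{lemma:restrictions-of-representations}(4) produces a $2$-dimensional space $(V^{\omega_4})^{\bar{\rG}}\subset V^{\omega_4}$ of $\bar{\rG}$-invariants; since the zero weight space $(V^{\omega_4})_0$ is also $2$-dimensional (a fact used already in the proof of that lemma) and visibly contains the $\bar{\rG}$-invariants, the two coincide, and $\bP((V^{\omega_4})_0)\cong\bP^1$ is the required one-parameter family $\{s_t\}$, which contains $[s_0]$.

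For the inclusions: every $s\in(V^{\omega_4})_0$ vanishes at $[\rP]$, because the central element $-\mathrm{id}$ of the derived Levi subgroup $\Sp_6\subset\rL$ lies in $\rT$, hence in $\widetilde{\rP}$ and in $\bar{\rP}$, and acts on the standard $6$-dimensional representation $V_\rL^{\omega_4}$ by $-1$ (equivalently, $V_\rL^{\omega_4}$ has no zero weight), so the $\rT$-equivariant evaluation map kills $(V^{\omega_4})_0$. Since $s_0$ is $\widetilde{\rG}$-invariant and the orbit $\widetilde{\rG}\cdot[\rP]$ equals $\widetilde{\rG}/\widetilde{\rP}$ (its stabiliser being $\widetilde{\rG}\cap\rP=\widetilde{\rP}$), we obtain $\widetilde{\rG}/\widetilde{\rP}\subseteq Z(s_0)$; likewise $\bar{\rG}/\bar{\rP}\subseteq Z(s_t)$ for every~$t$.

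The reverse inclusions are the crux. For~(ii) the subvariety $\bar{\rG}/\bar{\rP}$ has the expected codimension $6=\rk\cU^{\omega_4}$, so I would proceed in two steps. First, show that for generic $t$ the differential $d_{[\rP]}s_t\colon(T_X)_{[\rP]}\to V_\rL^{\omega_4}$ is surjective; this is an equivariant, hence finite, representation-theoretic verification, and it makes $s_t$ a regular section (so $Z(s_t)$ is pure of codimension~$6$, giving the "regular" assertion of~(ii)) and $Z(s_t)$ smooth of dimension~$9$ along $\bar{\rG}/\bar{\rP}$, so that $\bar{\rG}/\bar{\rP}$ is a connected component. Second, rule out any further component: either by comparing the cycle class $[Z(s_t)]=c_6(\cU^{\omega_4})$ with $[\bar{\rG}/\bar{\rP}]$ (the Chern classes of $\cU^{\omega_4}$ being accessible through Lemma~\ref{lemma:exterior-powers-omega_4}), or more crudely by matching the degrees of the two $9$-dimensional cycles, or else by using that $Z(s_t)$ is $\bar{\rG}$-stable together with the finite list of $\bar{\rG}$-orbits on $X$.

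For part~(i) the difficulty is genuine: $Z(s_0)=\widetilde{\rG}/\widetilde{\rP}$ has codimension $4$, not $6$, so $s_0$ is \emph{not} a regular section and no Koszul or complete-intersection argument is available. Here I would use that $(\rG,\widetilde{\rG})$ is a symmetric pair (established in the proof of Lemma~\ref{lemma:B4-and-D4-as-linear-sections}), so $\widetilde{\rG}$ acts on $X$ with finitely many orbits and the closed $\widetilde{\rG}$-stable set $Z(s_0)$ is a union of orbit closures; one then checks that $s_0$ is nonzero at a point of every $\widetilde{\rG}$-orbit other than $\widetilde{\rG}/\widetilde{\rP}$, or equivalently computes that $d_{[\rP]}s_0$ has rank exactly~$4$ (so $Z(s_0)$ is smooth along $\widetilde{\rG}/\widetilde{\rP}$, which is therefore a connected component) and then eliminates the remaining strata using the orbit stratification. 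I expect this last point — the orbit analysis for $s_0$, equivalently the rank computation for $d_{[\rP]}s_0$ together with control of the higher-dimensional $\widetilde{\rG}$-strata in $X$ — to be the step demanding the most care.
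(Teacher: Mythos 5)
Your setup (identifying $s_0$ with the trivial $\widetilde{\rG}$-summand, the family with $\bP$ of the two-dimensional trivial $\bar{\rG}$-summand $=$ the zero weight space, and the easy inclusion $\widetilde{\rG}/\widetilde{\rP}\subseteq Z(s_0)$ via vanishing at $[\rP]$ plus invariance) matches the paper. But the reverse inclusion, which is the entire content of the lemma, is only a plan in your write-up, and the plan misses the mechanism the paper actually uses. The key point is that $(\cU^{\omega_4})^\vee(1)\cong\cU^{\omega_4}$ (Corollary \ref{corollary:duals}) is globally generated, so the image $I_{Z(s)}(1)$ of $s^\vee(1)$ is globally generated; hence $Z(s)$ is \emph{exactly} the linear section of $X\subset\bP(V^{\omega_1})$ by $\bP\bigl(H^0(I_{Z(s)}(1))^{\perp}\bigr)$, with no room for extra components. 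One then identifies $H^0(I_{Z(s)}(1))\subset V^{\omega_1}$ as the image of the equivariant map $\psi_s\colon V^{\omega_4}\to V^{\omega_1}$ and invokes Lemma \ref{lemma:B4-and-D4-as-linear-sections} / Corollary \ref{corollary:D4-as-linear-sections}. This disposes of both (i) and (ii) at once and is why the paper never needs differentials, Chern classes, or orbit stratifications.

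Concretely, the gaps in your route are: (a) you never execute the surjectivity of $d_{[\rP]}s_t$, and none of your three proposed ways of excluding extra components is carried out or airtight --- the identity $[Z(s_t)]=c_6(\cU^{\omega_4})$ presupposes purity of codimension $6$, which is what is in question, and the finiteness of $\bar{\rG}$-orbits on $X$ does not follow from $(\rG,\widetilde{\rG})$ being symmetric (only finiteness of $\widetilde{\rG}$-orbits does); (b) for part (i) you correctly note that $s_0$ is not regular, but the orbit analysis you defer to is precisely the hard step and is left undone; (c) you present $\bP\bigl((V^{\omega_4})_0\bigr)\cong\bP^1$ as ``the required one-parameter family containing $[s_0]$'', yet $Z(s_0)=\widetilde{\rG}/\widetilde{\rP}\neq\bar{\rG}/\bar{\rP}$, so not every member works. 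The paper pins this down: using the $\mathrm{S}_3$-action of $N_{\rG}(\bar{\rG})/\bar{\rG}$ and linearity of $h\mapsto\psi_h$, exactly three points of $\bP(H)$ (the triality orbit of $[s_0]$) have $\ker\psi_h=V^{\baromega_i}_{\bar{\rG}}\oplus\Bbbk^2$, and for all other $h$ one gets $\operatorname{im}\psi_h=V^{\baromega_1}_{\bar{\rG}}\oplus V^{\baromega_3}_{\bar{\rG}}\oplus V^{\baromega_4}_{\bar{\rG}}$ and hence $Z(s)=\bar{\rG}/\bar{\rP}$. You would need to supply an argument of comparable precision to make ``generic $t$'' meaningful.
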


\begin{proof}
  To show (i) we proceed as follows. By Lemma \ref{lemma:restrictions-of-representations}(4) we know
  that $V^{\omega_4}|_{\widetilde{\rG}}$ has a one-dimensional trivial representation as a direct
  summand. Let $s_0 \in V^{\omega_4}$ be any non-zero element in this subspace. Our goal is to
  show $Z(s_0) = \widetilde{\rG}/\widetilde{\rP}$. Consider the exact sequence
  \begin{equation*}
    \left( \cU^{\omega_4} \right)^\vee \xrightarrow{s_0^\vee} \cO_{\rG/\rP} \to \cO_{Z(s_0)} \to 0.
  \end{equation*}
  The image of $s_0^\vee$ is the ideal sheaf $I_{Z(s_0)}$ of $Z(s_0)$. By Corollary \ref{corollary:duals} we have
  $\left( \cU^{\omega_4} \right)^\vee(1) = \cU^{\omega_4}$ and, therefore, both $\left( \cU^{\omega_4} \right)^\vee(1)$
  and $I_{Z(s_0)}(1)$ are globally generated. Hence, $Z(s_0)$ inside $\rG/\rP$ is a linear section with the subspace
  $H^0(\rG/\rP, I_{Z(s_0)}(1))^\perp$. Since $s_0$ is fixed by $\widetilde{\rG}$, the map
  \begin{equation*}
    H^0(\rG/\rP, \cU^{\omega_4}) = V^{\omega_4} \xrightarrow{s_0^\vee(1)} H^0(\rG/\rP, \cO(1)) = V^{\omega_1},
  \end{equation*}
  is $\widetilde{\rG}$-equivariant and non-zero. Hence, by Lemma \ref{lemma:restrictions-of-representations}(4) its image
  is
  $V^{\tildeomega_4}_{\widetilde{\rG}}$ and we get the desired
  \begin{equation*}
    H^0(\rG/\rP, I_{Z(s_0)}(1))^\perp = V^{\tildeomega_2}_{\widetilde{\rG}}.
  \end{equation*}
  Finally, applying Lemma \ref{lemma:B4-and-D4-as-linear-sections} we get the claim.

  \medskip

  The proof of (ii) is similar but the argument becomes a bit more subtle.
  By Lemma \ref{lemma:restrictions-of-representations}(4) we know
  that $V^{\omega_4}|_{\bar{\rG}}$ has a two-dimensional trivial representation
  as a direct summand.
  Let us denote this subspace $H \subset V^{\omega_4}$; the section $s_0$ considered
  in part (i) is contained in $H$.
  As in the proof of (i), any non-zero element $s \in H$ defines a non-zero
  $\bar{\rG}$-equivariant map
  \begin{equation}\label{eq:lemma-zero-locus-proof}
    H^0(\rG/\rP, \cU^{\omega_4}) = V^{\omega_4} \xrightarrow{s^\vee(1)} H^0(\rG/\rP, \cO(1)) = V^{\omega_1},
  \end{equation}
  and to prove the claim of the lemma, we need to determine the dimension of
  its image for variying sections $s$. Since by Lemma \ref{lemma:restrictions-of-representations}(4)
  the representations $V^{\omega_4}|_{\bar{\rG}}$ and $V^{\omega_1}|_{\bar{\rG}}$
  have three irreducible direct summands in common, we have to be more careful
  than in part (i).

  Let us vary the section $s \in V^{\omega_4}$, i.e. we consider the canonical
  evaluation morphism $V^{\omega_4} \otimes \cO \to \cU^{\omega_4}$, take its dual
  to get $\cU^{\omega_4}(-1) \to {(V^{\omega_4})}^\vee \otimes \cO$,
  twist the result by $\cO(1)$ and finally take global sections. This way we obtain
  a $\rG$-equivariant map $V^{\omega_4} \to {(V^{\omega_4})}^\vee \otimes V^{\omega_1}$,
  which by adjunction can be rewritten as a $\rG$-equivariant map
  \begin{equation}\label{eq:key-map-in-lemma-on-zero-loci}
    \psi \colon V^{\omega_4} \otimes V^{\omega_4} \to V^{\omega_1}
  \end{equation}
  For any non-zero $h \in H \subset V^{\omega_4}$ we can consider the
  specialisations
  \begin{equation}\label{eq:key-map-in-lemma-on-zero-loci-specialization}
    \begin{aligned}
      & \psi_h \colon V^{\omega_4} \longrightarrow V^{\omega_1} \\
      & \qquad v \mapsto \psi(h \otimes v)
    \end{aligned}
  \end{equation}
  which are nothing else but the maps \eqref{eq:lemma-zero-locus-proof}.

  Combining part~(i) of this lemma with Lemma \ref{lemma:restrictions-of-representations}(4)
  we see that there exist $s_0 \in H$ such that $\ker \psi_{s_0} = V^{\baromega_1}_{\bar{\rG}} \oplus \Bbbk^2$. Since \eqref{eq:key-map-in-lemma-on-zero-loci} is $\rG$-equivariant,
  we have $g (\ker \psi_{s_0}) = \ker \psi_{g \cdot s_0}$ for any $g \in \rG$.
  The subspace $H \subset V^{\baromega_4}$ is preserved by $N_{\rG}(\bar{\rG})$
  and using Lemma \ref{lemma:restrictions-of-representations}(2) we see that
  on the projective line $\bP(H)$ there exists three points $[h] \in \bP(H)$ such
  that for these points $\ker \varphi_h = V^{\baromega_i}_{\bar{\rG}} \oplus \Bbbk^2$
  with distinct $i \in \{1,3,4\}$.

  Finally, since \eqref{eq:key-map-in-lemma-on-zero-loci-specialization} depends
  linearly on $h$, we conclude that these three points are the only points
  on $\bP(H)$ with $\ker \varphi_h = V^{\baromega_i}_{\bar{\rG}} \oplus \Bbbk^2$
  with $i \in \{1,3,4\}$ and for all the other points we have $\ker \varphi_h = \Bbbk^2$.
  Indeed, since for fixed $h$ the map \eqref{eq:key-map-in-lemma-on-zero-loci-specialization}
  is $\bar{\rG}$-equivaraint, from Lemma \ref{lemma:restrictions-of-representations}(4)
  we see that \eqref{eq:key-map-in-lemma-on-zero-loci-specialization} is completely
  determined by the restrictions $f_i(h) \colon V^{\baromega_i}_{\bar{\rG}} \to V^{\baromega_i}_{\bar{\rG}}$, which are linear in $h$. Since each $f_i(h)$ is linear
  in $h$, it either vanishes at one point $[h] \in \bP(H)$ or everywhere. As we have
  already exhibited three distinct points in $\bP(H)$, where only one of the $f_i(h)$
  vanishes, the latter option is not possible.

  We have shown that outside of the three points on $\bP(H)$ constructed above we
  have the desired
  \begin{equation*}
    H^0(\rG/\rP, I_{Z(s)}(1))^\perp = V^{\baromega_2}_{\bar{\rG}}.
  \end{equation*}
  Finally, applying Lemma \ref{lemma:B4-and-D4-as-linear-sections} we get the claim.

\end{proof}

\subsection{Auxiliary complexes}
\label{subsection:auxiliary-complexes}

\begin{lemma}
  \label{lemma:monads}
  \
  \begin{enumerate}
    \item There is a $\rG$-equivariant monad
    \begin{equation}\label{eq:monad-omega_4}
      \cU^{\omega_4}(-1) \to V^{\omega_4} \otimes \cO \to \cU^{\omega_4},
    \end{equation}
    whose middle cohomology is isomorphic to $\cU^{\omega_3}(-1)$.

    \medskip

    \item There is a $\rG$-equivariant monad
    \begin{equation}\label{eq:monad-omega_4-tensor-omega_4}
      \cU^{2\omega_4}(-1) \oplus \cU^{\omega_3}(-1) \oplus \cO \to V^{\omega_4} \otimes \cU^{\omega_4} \to \cU^{2\omega_4} \oplus \cU^{\omega_3} \oplus \cO(1),
    \end{equation}
    whose middle cohomology is isomorphic to $\cU^{\omega_3 + \omega_4}(-1) \oplus \cU^{\omega_2}(-1) \oplus \cU^{\omega_4}$.

    \medskip

    \item There is a $\rG$-equivariant monad
    \begin{equation}\label{eq:monad-omega_4-tensor-omega_2}
      \cU^{\omega_2 + \omega_4}(-1) \oplus \cU^{\omega_3} \to V^{\omega_4} \otimes \cU^{\omega_2} \to \cU^{\omega_2 + \omega_4} \oplus \cU^{\omega_3}(1),
    \end{equation}
    whose middle cohomology is isomorphic to $\cU^{\omega_2 + \omega_3}(-1) \oplus \cU^{\omega_3 + \omega_4} \oplus \cU^{\omega_4}(1)$.

    \medskip

    \item There is a $\rG$-equivariant monad
    \begin{equation}\label{eq:monad-omega_4-tensor-omega_3}
      \cU^{\omega_3 + \omega_4}(-1) \oplus \cU^{\omega_2}(-1) \oplus \cU^{\omega_4} \to V^{\omega_4} \otimes \cU^{\omega_3} \to \cU^{\omega_3 + \omega_4} \oplus \cU^{\omega_2} \oplus \cU^{\omega_4}(1),
    \end{equation}
    whose middle cohomology is isomorphic to $\cU^{2\omega_3}(-1) \oplus \cU^{\omega_2 + \omega_4}(-1) \oplus \cU^{2\omega_4} \oplus \cU^{\omega_3} \oplus \cO(1)$.

    \medskip

    \item There is a $\rG$-equivariant complex
    \begin{equation}\label{eq:monad-2omega_4}
      \cU^{2\omega_4}(-2) \to V^{\omega_4} \otimes \cU^{\omega_4}(-1) \to
      \left( \Lambda^2 V^{\omega_4} \otimes \cO \right) \oplus \left( \cU^{2\omega_4}(-1) \oplus \cU^{\omega_3}(-1) \oplus \cO \right) \to
      V^{\omega_4} \otimes \cU^{\omega_4} \to \cU^{2\omega_4}
    \end{equation}
    whose only (middle) cohomology is isomorphic to $\cU^{2\omega_4}(-1) \oplus \cU^{\omega_2 + \omega_4}(-2)$.

    \medskip

    \item There is a $\rG$-equivariant complex
    \begin{multline}\label{eq:monad-3omega_4}
      \cU^{3\omega_4}(-3) \to V^{\omega_4} \otimes \cU^{2\omega_4}(-2)
      \to \left(\cU^{3\omega_4}(-2) \oplus \cU^{\omega_3 + \omega_4}(-2) \oplus \cU^{\omega_4}(-1) \right) \oplus \left(\Lambda^2 V^{\omega_4} \otimes \cU^{\omega_4}(-1)\right) \to \\
      \left( \Lambda^3 V^{\omega_4} \otimes \cO \right) \oplus \left(V^{\omega_4} \otimes \left( \cU^{2\omega_4}(-1) \oplus \cU^{\omega_3}(-1) \oplus \cO \right) \right) \to \\
      \left(\cU^{3\omega_4}(-1) \oplus \cU^{\omega_3 + \omega_4}(-1) \oplus \cU^{\omega_4} \right) \oplus \left(\Lambda^2 V^{\omega_4} \otimes \cU^{\omega_4}\right) \to
      V^{\omega_4} \otimes \cU^{2\omega_4} \to
      \cU^{3\omega_4}
    \end{multline}
    whose only (middle) cohomology is isomorphic to
    \begin{equation*}
      \cU^{2\omega_2}(-3) \oplus \cU^{\omega_3 + 2\omega_4}(-2) \oplus \cU^{\omega_2 + \omega_4}(-2) \oplus \cU^{2\omega_4}(-1).
    \end{equation*}

    \item There is a $\rG$-equivariant complex
    \begin{multline}\label{eq:monad-2omega_4-tensor-omega_4}
      \cU^{3\omega_4}(-2) \oplus \cU^{\omega_3 + \omega_4}(-2) \oplus \cU^{\omega_4}(-1) \to V^{\omega_4} \otimes \left( \cU^{2\omega_4}(-1) \oplus \cU^{\omega_3}(-1) \oplus \cO \right) \to \\
      \to \left( \Lambda^2 V^{\omega_4} \otimes \cU^{\omega_4} \right) \oplus
      \left( \cU^{3\omega_4}(-1) \oplus \cU^{\omega_3 + \omega_4}(-1) \oplus \cU^{\omega_4} \right) \oplus
      \left( \cU^{\omega_3 + \omega_4}(-1) \oplus \cU^{\omega_2}(-1) \oplus \cU^{\omega_4} \right) \oplus \cU^{\omega_4} \to \\
      \to V^{\omega_4} \otimes \left( \cU^{2\omega_4} \oplus \cU^{\omega_3} \oplus \cO(1) \right) \to
      \cU^{3\omega_4} \oplus \cU^{\omega_3 + \omega_4} \oplus \cU^{\omega_4}(1)
    \end{multline}
    whose only (middle) cohomology is isomorphic to
    \begin{multline*}
      \left( \cU^{3\omega_4}(-1) \oplus \cU^{\omega_3 + \omega_4}(-1) \oplus \cU^{\omega_4} \right) \oplus \\
      \oplus \cU^{\omega_2+2\omega_4}(-2) \oplus \cU^{\omega_2+\omega_3}(-2) \oplus \cU^{\omega_3+\omega_4}(-1) \oplus \cU^{\omega_2}(-1)
    \end{multline*}
  \end{enumerate}
\end{lemma}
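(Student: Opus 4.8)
The plan is to deduce all seven statements from the basic monad \eqref{eq:monad-omega_4}. Items \eqref{eq:monad-omega_4-tensor-omega_4}, \eqref{eq:monad-omega_4-tensor-omega_2} and \eqref{eq:monad-omega_4-tensor-omega_3} will be obtained from it by tensoring with $\cU^{\omega_4}$, $\cU^{\omega_2}$ and $\cU^{\omega_3}$; items \eqref{eq:monad-2omega_4} and \eqref{eq:monad-3omega_4} by forming the graded exterior square and cube of the complex \eqref{eq:monad-omega_4}; and item \eqref{eq:monad-2omega_4-tensor-omega_4} by tensoring the graded exterior square with $\cU^{\omega_4}$. In every case, identifying the terms and the middle cohomology then reduces to the decompositions recorded in Lemmas~\ref{lemma:tensor-products-F4-P1} and \ref{lemma:exterior-powers-omega_4}.

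I would first establish \eqref{eq:monad-omega_4}. Take the right-hand arrow to be the evaluation morphism $V^{\omega_4}\otimes\cO\to\cU^{\omega_4}$; it is surjective because $\cU^{\omega_4}$ is globally generated and $H^0(X,\cU^{\omega_4})=V^{\omega_4}$ by Borel--Weil--Bott. Since every $\rG$-representation is self-dual by \eqref{eq:longest-element-F4} and $(\cU^{\omega_4})^\vee=\cU^{\omega_4}(-1)$ by Corollary~\ref{corollary:duals}, the transpose of the evaluation morphism is an inclusion of a subbundle $\cU^{\omega_4}(-1)\hookrightarrow V^{\omega_4}\otimes\cO$, which we take as the left-hand arrow. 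Its composite with the evaluation morphism is $\rG$-equivariant, and under the equivalence \eqref{eq:equivalence-homogeneous-bundles-P-reps} a $\rG$-equivariant morphism $\cU^{\omega_4}(-1)\to\cU^{\omega_4}$ corresponds to a $\rP$-morphism between the irreducible modules with highest weights $\omega_4-\omega_1$ and $\omega_4$; as these weights differ, it vanishes. Hence \eqref{eq:monad-omega_4} is a monad, and its middle cohomology is a $\rG$-equivariant sheaf, locally free by homogeneity, of rank $\dim V^{\omega_4}-2\,\rk(\cU^{\omega_4})=26-12=14$. A direct computation of its fiber at the base point $[\rP]$ as a $\rP$-module (equivalently, of its $\rT$-weights) identifies it with $\cU^{\omega_3}(-1)$.

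Granting \eqref{eq:monad-omega_4}, items \eqref{eq:monad-omega_4-tensor-omega_4}--\eqref{eq:monad-omega_4-tensor-omega_3} follow by tensoring \eqref{eq:monad-omega_4} with $\cU^{\omega_4}$, $\cU^{\omega_2}$, $\cU^{\omega_3}$ --- an exact operation --- and decomposing each term, together with the resulting middle cohomology $\cU^{\omega_3}(-1)\otimes\cU^{\omega_4}$, $\cU^{\omega_3}(-1)\otimes\cU^{\omega_2}$ and $\cU^{\omega_3}(-1)\otimes\cU^{\omega_3}$, via Lemmas~\ref{lemma:tensor-products-F4-P1} and \ref{lemma:exterior-powers-omega_4} (the decomposition of $\cU^{\omega_3}\otimes\cU^{\omega_3}$ being computed exactly as in Lemma~\ref{lemma:tensor-products-F4-P1}). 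For the longer complexes, write $M$ for the three-term complex \eqref{eq:monad-omega_4}, placed in degrees $-1,0,1$, so that $M\simeq\cU^{\omega_3}(-1)$ in degree $0$. The complexes \eqref{eq:monad-2omega_4} and \eqref{eq:monad-3omega_4} are precisely the graded exterior powers $\Lambda^2 M$ and $\Lambda^3 M$: in the total complex a factor in odd homological degree enters through a symmetric power and one in even degree through an exterior power, which is why $S^k\cU^{\omega_4}=\cU^{k\omega_4}$ appears at the extreme terms and the exterior powers $\Lambda^j V^{\omega_4}$ ($j\ge2$) in the interior; expanding with $S^k\cU^{\omega_4}=\cU^{k\omega_4}$ and the tensor formulas reproduces \eqref{eq:monad-2omega_4} and \eqref{eq:monad-3omega_4} term by term. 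This graded exterior power computes the derived exterior power $L\Lambda^k M$, and since $M\simeq\cU^{\omega_3}(-1)$ in degree $0$ one gets $L\Lambda^k M\simeq L\Lambda^k\big(\cU^{\omega_3}(-1)\big)=(\Lambda^k\cU^{\omega_3})(-k)$, which Lemma~\ref{lemma:exterior-powers-omega_4} breaks into the summands listed for \eqref{eq:monad-2omega_4} and \eqref{eq:monad-3omega_4}. Finally \eqref{eq:monad-2omega_4-tensor-omega_4} is $\Lambda^2 M\otimes\cU^{\omega_4}$, whose cohomology $(\Lambda^2\cU^{\omega_3})(-2)\otimes\cU^{\omega_4}$ is decomposed using one further tensor product, $\cU^{\omega_2+\omega_4}\otimes\cU^{\omega_4}$, obtained in the same way as Lemma~\ref{lemma:tensor-products-F4-P1}.

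The main obstacle is twofold. First, pinning down the middle cohomology of \eqref{eq:monad-omega_4} as $\cU^{\omega_3}(-1)$, rather than merely as an acyclic $\rG$-equivariant vector bundle of rank $14$, requires an honest (if elementary) computation with the $\rP$-module $V^{\omega_4}$. Second, the exterior-power argument for \eqref{eq:monad-2omega_4}--\eqref{eq:monad-2omega_4-tensor-omega_4} must be set up with care: one either invokes the quasi-isomorphism invariance of the naive graded exterior power on bounded complexes of vector bundles, together with the d\'ecalage rule that turns $\Lambda$ into $S$ on the odd-degree factors, or else constructs \eqref{eq:monad-2omega_4}--\eqref{eq:monad-2omega_4-tensor-omega_4} by hand as iterated mapping cones built from copies of \eqref{eq:monad-omega_4} and checks exactness in each non-central degree directly from Corollary~\ref{corollary:acyclicities}. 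Everything else is routine bookkeeping with the two decomposition lemmas.
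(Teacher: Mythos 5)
Your proposal is correct and follows essentially the same route as the paper: establish \eqref{eq:monad-omega_4} via the evaluation map, its dual, and the Schur-lemma vanishing of the composite (with the middle cohomology pinned down by rank together with a weight computation), and then produce the remaining six complexes by tensoring with $\cU^{\omega_4}$, $\cU^{\omega_2}$, $\cU^{\omega_3}$ and by taking graded exterior squares and cubes, decomposing everything via Lemmas~\ref{lemma:tensor-products-F4-P1} and \ref{lemma:exterior-powers-omega_4}. The only difference is one of exposition: you spell out the d\'ecalage bookkeeping for the graded exterior powers (symmetric powers on odd-degree factors, exterior powers on even-degree factors, and the identification of the cohomology with $(\Lambda^k\cU^{\omega_3})(-k)$ via $L\Lambda^k$), which the paper takes as understood.
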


\begin{proof}
  Let us first consider \eqref{eq:monad-omega_4}. Since the vector bundle $\cU^{\omega_4}$
  is globally generated by $H^0(X, \cU^{\omega_4}) = V^{\omega_4}$,
  the canonical $\rG$-equivariant morphism $V^{\omega_4} \otimes \cO \to \cU^{\omega_4}$
  is surjective. Taking its dual, using Corollary \ref{corollary:duals} and the selfduality of
  the $\rG$-representation $V^{\omega_4}$, we get the $\rG$-equivariant embedding
  $\cU^{\omega_4}(-1) \to V^{\omega_4} \otimes \cO$.
  The composition
  \begin{equation*}
    \cU^{\omega_4}(-1) \to V^{\omega_4} \otimes \cO \to \cU^{\omega_4}
  \end{equation*}
  vanishes, since it is a $\rG$-equivariant morphism between two non-isomorphic irreducible $\rG$-equivariant vector bundles. Thus, we see that \eqref{eq:monad-omega_4} is indeed a monad.

  A direct computation with weights of $V^{\omega_4}$ shows that among all $26$ weights only
  \begin{equation*}
    \omega_4, \omega_4 - \omega_1, \omega_3 - \omega_1, 0,0
  \end{equation*}
  are $\rL$-dominant. Since the rank of the cohomology of the monad has to be $26 - 6 - 6 = 14$, we see that $\cU^{\omega_3}(-1)$ is the only possibility.

  \medskip

  The remaining monads are derived from \eqref{eq:monad-omega_4} by linear algebra operations:
  \begin{itemize}
    \item \eqref{eq:monad-omega_4-tensor-omega_4} is obtained from \eqref{eq:monad-omega_4}
    by tensoring with $\cU^{\omega_4}$;

    \item \eqref{eq:monad-omega_4-tensor-omega_2} is obtained from \eqref{eq:monad-omega_4}
    by tensoring with $\cU^{\omega_2}$;

    \item \eqref{eq:monad-omega_4-tensor-omega_3} is obtained from \eqref{eq:monad-omega_4}
    by tensoring with $\cU^{\omega_3}$;

    \item \eqref{eq:monad-2omega_4} is the exterior square of \eqref{eq:monad-omega_4};

    \item \eqref{eq:monad-3omega_4} is the exterior cube of \eqref{eq:monad-omega_4};

    \item \eqref{eq:monad-2omega_4-tensor-omega_4} is obtained from \eqref{eq:monad-2omega_4}
    by tensoring with $\cU^{\omega_4}$.
  \end{itemize}
  Tensor products, exterior and symmetric powers required for the above computations
  can be found in Lemmas~\ref{lemma:tensor-products-F4-P1} and \ref{lemma:exterior-powers-omega_4}.
\end{proof}

From the above lemma we deduce the following corollary.
\begin{corollary}\label{corollary:containments}
  We have containments
  \begin{align}
    \label{eq:containment-omega_3}
    & \cU^{\omega_3}(m) \in \cD \quad \text{for} \quad m \in [0,6] \\
    \label{eq:containment-omega_2}
    & \cU^{\omega_2}(m) \in \cD \quad \text{for} \quad m \in [-1,5] \\
    \label{eq:containment-2omega_4}
    & \cU^{2\omega_4}(m) \in \cD \quad \text{for} \quad m \in [0,6] \\
    \label{eq:containment-omega_2+omega_4}
    & \cU^{\omega_2 + \omega_4}(m) \in \cD \quad \text{for} \quad m \in [0,4] \\
    \label{eq:containment-omega_3+omega_4}
    & \cU^{\omega_3 + \omega_4}(m) \in \cD \quad \text{for} \quad m \in [0,5] \\
    \label{eq:containment-omega_2+omega_3}
    & \cU^{\omega_2 + \omega_3}(m) \in \cD \quad \text{for} \quad m \in [0,3] \\
    \label{eq:containment-2omega_3}
    & \cU^{2\omega_3}(m) \in \cD \quad \text{for} \quad m \in [0,4] \\
    \label{eq:containment-3omega_4}
    & \cU^{3\omega_4}(m) \in \cD \quad \text{for} \quad m \in [1,4] \\
    \label{eq:containment-omega_3+2omega_4}
    & \cU^{\omega_3 + 2\omega_4}(m) \in \cD \quad \text{for} \quad m \in [1,3] \\
    \label{eq:containment-2omega_2}
    & \cU^{2\omega_2}(1) \in \cD \\
    \label{eq:containment-omega_2+2omega_4}
    & \cU^{\omega_2 + 2\omega_4}(m) \in \cD \quad \text{for} \quad m \in [1,2]
  \end{align}
\end{corollary}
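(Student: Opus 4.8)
The plan is to deduce all eleven containments from a single elementary principle together with the complexes already produced in Lemma~\ref{lemma:monads}. The principle is this: if $C^\bullet$ is a bounded complex of coherent sheaves on $X$ whose cohomology is concentrated in one degree, say $H^\bullet(C^\bullet)=F$, then $F$ (up to shift) is quasi-isomorphic to the convolution of $C^\bullet$, hence lies in any strictly full triangulated subcategory of $\Db(X)$ containing all the terms $C^i$; and since $\cD$, being generated by an exceptional collection, is admissible and in particular thick, every direct summand of an object of $\cD$ again lies in $\cD$. The argument is then purely organisational: each complex of Lemma~\ref{lemma:monads} (as well as the monad \eqref{eq:monad-U} of Lemma~\ref{lemma:monad-U} and the two short exact sequences \eqref{eq:exact-sequence-tangent-bundle}--\eqref{eq:exact-sequence-extension-tangent-bundle}) may be twisted by an arbitrary $\cO(j)$, and one feeds these twisted complexes into the principle in an order for which, at every step, all terms are bundles already known to lie in $\cD$.

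First I record the bundles that lie in $\cD$ for free: the members $\cO(t)$, $\cU^{\omega_4}(t)$, $\widetilde{T}_X(t)$ of the collection \eqref{eq:collection-F4-P1} for $t\in[0,7]$; the twists $\cU^{\omega_2}(t)$ for $t\in[-1,5]$, because \eqref{eq:exact-sequence-tangent-bundle}--\eqref{eq:exact-sequence-extension-tangent-bundle} exhibit $\cU^{\omega_2}(-1)$ as an object of $\langle\cO,\widetilde{T}_X,\cO(1)\rangle$ and one twists; and the twists $\cU^{2\omega_4}(t)$ for $t\in[0,6]$, obtained from the monad \eqref{eq:monad-U} twisted by $\cO(t+1)$, whose terms $\widetilde{T}_X(t)$, $\cO(t+1)$, $\widetilde{T}_X(t+1)$ all lie in $\cD$. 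Twisting the basic monad \eqref{eq:monad-omega_4} by $\cO(j)$, whose outer terms $\cU^{\omega_4}(j-1)$, $\cO(j)$, $\cU^{\omega_4}(j)$ lie in $\cD$ precisely for $j\in[1,7]$, then yields \eqref{eq:containment-omega_3}. With $\cO$, $\cU^{\omega_4}$, $\cU^{\omega_2}$, $\cU^{2\omega_4}$, $\cU^{\omega_3}$ available, the monads \eqref{eq:monad-omega_4-tensor-omega_4}, \eqref{eq:monad-2omega_4}, \eqref{eq:monad-omega_4-tensor-omega_2} and \eqref{eq:monad-omega_4-tensor-omega_3}, taken in this order and twisted appropriately, produce \eqref{eq:containment-omega_3+omega_4}, \eqref{eq:containment-omega_2+omega_4}, \eqref{eq:containment-omega_2+omega_3} and \eqref{eq:containment-2omega_3} respectively. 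At each step the only thing to verify is that every term of the twisted complex has already been certified; this is done by rewriting the terms with Corollary~\ref{corollary:duals} and Lemmas~\ref{lemma:tensor-products-F4-P1} and~\ref{lemma:exterior-powers-omega_4}, and reading off the admissible twist windows from Corollary~\ref{corollary:acyclicities} and the interval $[0,7]$ of the collection; the interval claimed for the new bundle is exactly the intersection of the windows imposed by the terms. This is the routine part, of the same nature as the computations in Section~\ref{section:construction}.

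The subtle point — and the main obstacle — is the pair of long complexes \eqref{eq:monad-3omega_4} and \eqref{eq:monad-2omega_4-tensor-omega_4}, which between them carry the remaining bundles $\cU^{3\omega_4}$, $\cU^{\omega_3+2\omega_4}$, $\cU^{2\omega_2}$ and $\cU^{\omega_2+2\omega_4}$. Because these complexes are built from (twisted, tensored) derived exterior powers of \eqref{eq:monad-omega_4}, and because $S^a\cU^{\omega_4}=\cU^{a\omega_4}$, the bundle $\cU^{3\omega_4}$ occurs among their terms as well as among the cohomology summands of \eqref{eq:monad-2omega_4-tensor-omega_4}, so applying the principle naively is circular. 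One gets around this by extracting $\cU^{3\omega_4}$ first, using that the summand $\cU^{2\omega_4}(-1)\otimes\cU^{\omega_4}=\cU^{3\omega_4}(-1)\oplus\cU^{\omega_3+\omega_4}(-1)\oplus\cU^{\omega_4}$ appears simultaneously in the degree-zero term and in the cohomology of \eqref{eq:monad-2omega_4-tensor-omega_4} — equivalently, $\cU^{2\omega_4}(-1)$ is such a common summand of \eqref{eq:monad-2omega_4} already before tensoring — which, combined with a short induction on the twist bounding the outer $\cU^{3\omega_4}$-terms against previously established bundles, places $\cU^{3\omega_4}(m)$ in $\cD$ for $m\in[1,4]$, i.e.\ \eqref{eq:containment-3omega_4}. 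Once this is in hand, every term of suitable twists of \eqref{eq:monad-2omega_4-tensor-omega_4} and \eqref{eq:monad-3omega_4} is known, and the principle applied once more — in the forward direction, or, where convenient, in its two-out-of-three form solving for one term from the cohomology and the others — yields \eqref{eq:containment-omega_2+2omega_4}, \eqref{eq:containment-2omega_2} and \eqref{eq:containment-omega_3+2omega_4}, completing the list. The essential discipline throughout is to keep an exact ledger of which twist of which bundle has been placed in $\cD$, and never to invoke the principle on a complex one of whose terms is not yet on the ledger.
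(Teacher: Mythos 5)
Your reduction of most of the eleven containments to the complexes of Lemma~\ref{lemma:monads}, the monad \eqref{eq:monad-U} and the sequences \eqref{eq:exact-sequence-tangent-bundle}--\eqref{eq:exact-sequence-extension-tangent-bundle} is correct and matches the paper's argument, as does your use of the thickness of $\cD$ to extract direct summands of the cohomology of a twisted complex all of whose terms already lie in $\cD$. You also correctly locate the one real difficulty: $\cU^{3\omega_4}$ cannot be reached this way, because it occurs among the \emph{terms} of both \eqref{eq:monad-3omega_4} and \eqref{eq:monad-2omega_4-tensor-omega_4}. But your proposed escape does not work. In \eqref{eq:monad-2omega_4-tensor-omega_4} the bundle $\cU^{3\omega_4}$ occurs in three distinct twists ($-2$, $-1$ and $0$) spread over three different terms, and in \eqref{eq:monad-3omega_4} in four distinct twists; cancelling the single summand $\cU^{2\omega_4}(-1)\otimes\cU^{\omega_4}$ that the degree-zero term shares with the cohomology (an operation that is in any case not legitimate in a triangulated category unless the differentials respect that splitting) still leaves the outer occurrences $\cU^{3\omega_4}(-2)$ and $\cU^{3\omega_4}$, and an ``induction on the twist'' has no base case, since at that stage no twist of $\cU^{3\omega_4}$ whatsoever is known to lie in $\cD$. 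Moreover, even granting \eqref{eq:containment-3omega_4}, the bundle $\cU^{\omega_3+2\omega_4}$ appears in the cohomology only of \eqref{eq:monad-3omega_4}, and the requirement that all four twists $\cU^{3\omega_4}(t-3),\dots,\cU^{3\omega_4}(t)$ occurring in its terms lie in the window $[1,4]$ forces $t=4$, which yields only $\cU^{\omega_3+2\omega_4}(2)$ and not the full range $m\in[1,3]$.

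The missing ingredient is a pair of \emph{new} monads, not listed in Lemma~\ref{lemma:monads}: one tensors \eqref{eq:monad-U} with $\cU^{\omega_4}$ (resp.\ with $\cU^{\omega_3}$). The resulting monad has cohomology $\cU^{2\omega_4}(-1)\otimes\cU^{\omega_4}=\cU^{3\omega_4}(-1)\oplus\cU^{\omega_3+\omega_4}(-1)\oplus\cU^{\omega_4}$ (resp.\ $\cU^{2\omega_4}(-1)\otimes\cU^{\omega_3}$, which contains $\cU^{\omega_3+2\omega_4}(-1)$), while its terms are built from $\widetilde{T}_X\otimes\cU^{\omega_4}$ (resp.\ $\widetilde{T}_X\otimes\cU^{\omega_3}$) and copies of $\cU^{\omega_4}$ (resp.\ $\cU^{\omega_3}$); the composition factors of $\widetilde{T}_X\otimes\cU^{\omega_4}$ are $\cU^{\omega_4}$, $\cU^{\omega_4}(1)$, $\cU^{\omega_2+\omega_4}(-1)$ and $\cU^{\omega_3}$, all of which (in the relevant twists) are already on your ledger, and crucially none of which is a twist of $\cU^{3\omega_4}$. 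Twisting by $\cO(t)$ for $2\le t\le 5$ (resp.\ $2\le t\le 4$) then gives \eqref{eq:containment-3omega_4} and \eqref{eq:containment-omega_3+2omega_4} without circularity, after which your treatment of \eqref{eq:containment-2omega_2} via \eqref{eq:monad-3omega_4} twisted by $\cO(4)$ and of \eqref{eq:containment-omega_2+2omega_4} via \eqref{eq:monad-2omega_4-tensor-omega_4} twisted by $\cO(3)$ and $\cO(4)$ goes through as you describe.
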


\begin{proof}
  We treat each case separately:
  \begin{itemize}
    \item \eqref{eq:containment-omega_3} follows from \eqref{eq:monad-omega_4} twisted by
    $\cO(t)$ for $1 \leq t \leq 7$ and \eqref{eq:subcategory-D}.

    \item \eqref{eq:containment-omega_2} follows from \eqref{eq:exact-sequence-tangent-bundle},
    \eqref{eq:exact-sequence-extension-tangent-bundle} twisted by $\cO(t)$ for $0 \leq t \leq 6$ and
    \eqref{eq:subcategory-D}.

    \item \eqref{eq:containment-2omega_4} follows from \eqref{eq:monad-U} twisted by $\cO(t)$
    for $1 \leq t \leq 7$ and \eqref{eq:subcategory-D}.

    \item \eqref{eq:containment-omega_2+omega_4} follows from \eqref{eq:monad-2omega_4} twisted by
    $\cO(t)$ for $2 \leq t \leq 6$, \eqref{eq:containment-2omega_4}, \eqref{eq:containment-omega_3}, and \eqref{eq:subcategory-D}.

    \item \eqref{eq:containment-omega_3+omega_4} follows from \eqref{eq:monad-omega_4-tensor-omega_4} twisted by
    $\cO(t)$ for $1 \leq t \leq 6$, \eqref{eq:containment-2omega_4}, \eqref{eq:containment-omega_3}, and \eqref{eq:subcategory-D}.

    \item \eqref{eq:containment-omega_2+omega_3} follows from \eqref{eq:monad-omega_4-tensor-omega_2}
    twisted by $\cO(t)$ for $1 \leq t \leq 4$,
    \eqref{eq:containment-omega_3}, \eqref{eq:containment-omega_2}, and \eqref{eq:containment-omega_2+omega_4}.

    \item \eqref{eq:containment-2omega_3} follows from \eqref{eq:monad-omega_4-tensor-omega_3}
    twisted by $\cO(t)$ for $1 \leq t \leq 5$,
    \eqref{eq:containment-omega_3}, \eqref{eq:containment-omega_2}, \eqref{eq:containment-omega_3+omega_4},
    \eqref{eq:subcategory-D}.

    \item To show \eqref{eq:containment-3omega_4} we proceed as follows. Tensoring \eqref{eq:monad-U} with $\cU^{\omega_4}$ we get a monad
    \begin{equation*}
      \widetilde{T}_X \otimes \cU^{\omega_4}(-1) \to \left( V^{\omega_1} \oplus \bC \right) \otimes \cU^{\omega_4} \to \widetilde{T}_X \otimes \cU^{\omega_4},
    \end{equation*}
    with cohomology
    \begin{equation*}
      \cU^{2\omega_4}(-1) \otimes \cU^{\omega_4} =
      \cU^{3\omega_4}(-1) \oplus \cU^{\omega_3 + \omega_4}(-1) \oplus \cU^{\omega_4}
      ,
    \end{equation*}
    where we applied Lemma \ref{lemma:tensor-products-F4-P1} to compute the above tensor product.
    Note also that by \eqref{eq:exact-sequence-tangent-bundle}--\eqref{eq:exact-sequence-extension-tangent-bundle}
    and Lemma \ref{lemma:tensor-products-F4-P1} we have
    \begin{equation*}
      \ss(\widetilde{T}_X \otimes \cU^{\omega_4}) = \cU^{\omega_4} \oplus \cU^{\omega_4}(1) \oplus \cU^{\omega_2 + \omega_4}(-1) \oplus \cU^{\omega_3}.
    \end{equation*}
    Finally, twisting the above monad by $\cO(t)$ for $2 \leq t \leq 5$ and using
    the known inclusions
    \eqref{eq:containment-omega_3}, \eqref{eq:containment-omega_2+omega_4},
    \eqref{eq:containment-omega_3+omega_4},
    and~\eqref{eq:subcategory-D}, we get \eqref{eq:containment-3omega_4}.

    \item To show \eqref{eq:containment-omega_3+2omega_4} we proceed as follows. Tensoring \eqref{eq:monad-U} with $\cU^{\omega_3}$ we get a monad
    \begin{equation*}
      \widetilde{T}_X \otimes \cU^{\omega_3}(-1) \to \left( V^{\omega_1} \oplus \bC \right) \otimes \cU^{\omega_3} \to \widetilde{T}_X \otimes \cU^{\omega_3},
    \end{equation*}
    with cohomology
    \begin{equation*}
      \cU^{2\omega_4}(-1) \otimes \cU^{\omega_3} =
      \cU^{\omega_3 + 2\omega_4}(-1) \oplus \cU^{\omega_2 + \omega_4}(-1) \oplus \cU^{2\omega_4} \oplus \cU^{\omega_3}
      ,
    \end{equation*}
    where we applied Lemma \ref{lemma:tensor-products-F4-P1} to compute the above tensor product.
    Note also that by \eqref{eq:exact-sequence-tangent-bundle}--\eqref{eq:exact-sequence-extension-tangent-bundle}
    and Lemma \ref{lemma:tensor-products-F4-P1} we have
    \begin{equation*}
      \ss(\widetilde{T}_X \otimes \cU^{\omega_3}) = \cU^{\omega_3} \oplus \cU^{\omega_3}(1) \oplus \cU^{\omega_2 + \omega_3}(-1) \oplus \cU^{\omega_3 + \omega_4} \oplus \cU^{\omega_4}(1).
    \end{equation*}
    Finally, twisting the above monad by $\cO(t)$ for $2 \leq t \leq 4$ and using
    the known inclusions
    \eqref{eq:containment-omega_3}, \eqref{eq:containment-2omega_4}--\eqref{eq:containment-omega_2+omega_3},
    and \eqref{eq:subcategory-D}, we get \eqref{eq:containment-omega_3+2omega_4}.

    \item \eqref{eq:containment-2omega_2} follows from \eqref{eq:monad-3omega_4} twisted by $\cO(4)$,
    \eqref{eq:containment-omega_3}, \eqref{eq:containment-2omega_4}, \eqref{eq:containment-omega_3+omega_4}
    \eqref{eq:containment-3omega_4}, and~\eqref{eq:subcategory-D}.

    \item \eqref{eq:containment-omega_2+2omega_4} follows from \eqref{eq:monad-2omega_4-tensor-omega_4} twisted by $\cO(t)$
    for $3 \leq t \leq 4$,
    \eqref{eq:containment-omega_3}--\eqref{eq:containment-2omega_4},
    \eqref{eq:containment-omega_3+omega_4},
    \eqref{eq:containment-3omega_4},
    and \eqref{eq:subcategory-D}.
  \end{itemize}
\end{proof}

\subsection{Proof of fullness}
\label{subsection:proof-of-fullness}

In this section we finally prove that the collection \eqref{eq:collection-F4-P1} is full
by restricting it to a family of subvarieties of $X$ of the form $\OG(2,8)$ for which
a convenient full exceptional collection is already known by \cite{KS21}.

Let $Z = \bar{\rG}/\bar{\rP} \cong \OG(2,8)$ and recall that by Lemma \ref{lemma:zero-loci}(2) we have a natural embedding
\begin{equation*}
  i_{\varphi} \colon Z \hookrightarrow X,
\end{equation*}
as the zero locus of a regular section $\varphi \in H^0(X, \cU^{\omega_4}) = V^{\omega_4}$. Acting by an element $g \in \rG$ we get a shifted embedding
\begin{equation*}
  i_{\varphi_{g}} \colon gZ \hookrightarrow X
\end{equation*}
as the zero locus of the section $\varphi_{g} = g \varphi$, where $gZ \cong Z$.
Since the action of $\rG$ on $X$ is transitive, we have the following simple fact.

\begin{lemma}
  Varying $g \in \rG$ we can sweep out the whole $X$ by $gZ \cong \bar{\rG}/\bar{\rP}$.
\end{lemma}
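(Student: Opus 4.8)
The plan is to reduce this to the transitivity of the $\rG$-action on $X$, which is the only substantial input. Since $Z = \bar{\rG}/\bar{\rP}$ is non-empty — it is a projective homogeneous space of positive dimension, realized as a subvariety of $X$ by Lemma \ref{lemma:zero-loci}(2) — I would begin by fixing any point $z_0 \in Z$. Given an arbitrary point $x \in X$, transitivity of the $\rG$-action on $X$ produces an element $g \in \rG$ with $g \cdot z_0 = x$. By construction the shifted embedding $i_{\varphi_g} \colon gZ \hookrightarrow X$ has image exactly $g \cdot Z$, so $x = g \cdot z_0 \in gZ$. Since $x$ was arbitrary, this gives $X = \bigcup_{g \in \rG} gZ$, which is the assertion.

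Equivalently, one may package the same argument through the evaluation morphism $\rG \times Z \to X$, $(g, z) \mapsto g \cdot i_\varphi(z)$: its restriction to $\rG \times \{z_0\}$ already coincides with the orbit map $g \mapsto g \cdot z_0$, which is surjective precisely because $\rG$ acts transitively on $X$. In this formulation one does not even need to vary the point of $Z$. There is no real obstacle in this step; the only thing to verify beyond transitivity is that $Z \neq \emptyset$, and this is immediate from Lemma \ref{lemma:zero-loci}(2). (For the purposes of the subsequent fullness argument one will of course want more — namely a well-behaved, ideally flat, family over an open subset of $\rG$ — but the bare set-theoretic covering statement claimed here requires nothing more than the above.)
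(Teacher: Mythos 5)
Your argument is correct and is exactly the one the paper has in mind: the lemma is stated there as a "simple fact" following immediately from the transitivity of the $\rG$-action on $X$, with no further proof given. Your write-up just makes explicit the one-line orbit argument (pick $z_0 \in Z \neq \emptyset$ and move it to any point of $X$), which is all that is needed.
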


As a corollary we now get the following lemma.

\begin{lemma}[{\cite[Lemma 4.5]{Ku08a}}]
  \label{lemma:kuznetsov}
  If for an object $F \in \Db(X)$ the restrictions $i_{\varphi_{g}}^*F$ vanish for all $\varphi_{g} \in H^0(X, \cU^{\omega_4})$ as above, then $F = 0$.
\end{lemma}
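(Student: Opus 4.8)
The plan is to assemble the copies $gZ\cong\bar{\rG}/\bar{\rP}$ into a single incidence variety and to exploit that its projection onto $X$ is faithfully flat. Set $\cE=\cU^{\omega_4}$, $V=H^0(X,\cE)=V^{\omega_4}$, let $B=\rG\cdot[\varphi]\subset\bP(V)$ be the $\rG$-orbit of the regular section $\varphi$ furnished by Lemma~\ref{lemma:zero-loci}(ii) — so $B$ is exactly the set of lines $[\varphi_g]$, and every zero locus $Z(\varphi_g)=gZ$ is smooth of the expected codimension $6$ — and consider
\[
  \mathcal{Z}\;=\;\{\,(x,[\psi])\in X\times B\;:\;\psi(x)=0\,\}
\]
with its two $\rG$-equivariant projections $p\colon\mathcal{Z}\to X$ and $q\colon\mathcal{Z}\to B$.

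First I would pin down the geometry of this diagram. The morphism $q$ presents $\mathcal{Z}$ as a family over $B$ whose fibre over $[\psi]$ is $Z(\psi)\times\{[\psi]\}\cong\OG(2,8)$; since $B$ is connected and the fibres are smooth of constant dimension, $\mathcal{Z}$ is an irreducible variety and each fibre inclusion into $\mathcal{Z}$ is a regular embedding. The morphism $p$ is surjective by the sweeping-out lemma recorded above. Moreover, by generic flatness $p$ is flat over a dense open subset $U\subseteq X$, and the translates $gU$ cover $X$ because $\rG$ acts transitively; equivariance of $p$ then upgrades flatness over $U$ to flatness over every $gU$, hence $p$ is flat. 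A flat surjection onto the connected variety $X$ is faithfully flat, so $p^*\colon\Db(X)\to\Db(\mathcal{Z})$ is conservative; it therefore suffices to prove $p^*F=0$.

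Then I would feed in the hypothesis fibrewise. For $[\psi]\in B$ let $j_\psi\colon q^{-1}([\psi])\hookrightarrow\mathcal{Z}$ be the fibre inclusion; under the identification $q^{-1}([\psi])\cong Z(\psi)$ induced by $p$ one has $p\circ j_\psi=i_\psi$, where $i_\psi$ is a regular embedding of codimension $6$. Consequently
\[
  j_\psi^{*}\,p^{*}F\;\cong\;i_\psi^{*}F\;=\;0
\]
by assumption. If $p^*F$ were nonzero, pick a point $z$ in its support and put $[\psi]=q(z)\in B$; restricting the vanishing complex $j_\psi^{*}p^{*}F$ further to the closed point $z$ and applying derived Nakayama, the derived fibre of $p^*F$ at $z$ would be nonzero, contradicting $j_\psi^{*}p^{*}F=0$. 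Hence $p^*F=0$, and by conservativity of $p^*$ we get $F=0$.

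The one genuinely non-formal point is the faithful flatness of $p$; the equivariance argument above settles it, but one should keep in mind that $B$, being an orbit, is only locally closed in $\bP(V)$, so $\mathcal{Z}$ is best viewed as the restriction to $B$ of the universal zero locus over $\bP(V)$, which is flat precisely over the open locus of regular sections — a locus containing $B$ by Lemma~\ref{lemma:zero-loci}(ii). (This is the content of \cite[Lemma~4.5]{Ku08a}.) Everything else is routine bookkeeping with derived pullbacks along $p$ and $q$.
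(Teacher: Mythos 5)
Your argument is correct and is, in substance, the proof of the cited \cite[Lemma 4.5]{Ku08a}, which the paper quotes without reproducing: the incidence variety $\mathcal{Z}$, flatness of $p$ (generic flatness plus $\rG$-equivariance and transitivity of the $\rG$-action on $X$), fibrewise derived Nakayama along $q$, and conservativity of faithfully flat pullback are exactly the ingredients used there. A minor streamlining: irreducibility of $\mathcal{Z}$ and regularity of the fibre inclusions $j_\psi$ play no role, since a closed point $z$ factors through $q^{-1}(q(z))\hookrightarrow\mathcal{Z}$ for purely formal reasons and derived pullback is functorial, so the derived fibre of $p^*F$ at $z$ vanishes as soon as $j_{q(z)}^*p^*F=i_{q(z)}^*F=0$.
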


Let us now recall a known exceptional collection for $Z = \OG(2,8)$. Consider full triangulated subcategories
\begin{equation*}
  \cA = \langle \cO, \cU^{\baromega_1}, \cU^{\baromega_3}, \cU^{\baromega_4} \rangle \subset \Db(Z)
\end{equation*}
and
\begin{equation*}
  \cR = \langle \cU^{2\baromega_1}(-1), \cU^{2\baromega_3}(-1), \cU^{2\baromega_4}(-1), \widetilde{T}_{Z}(-1) \rangle \subset \Db(Z),
\end{equation*}
where $\widetilde{T}_{Z}$ is the extension of the tangent bundle $T_Z$ by $\cO_Z$ analogous to \eqref{eq:exact-sequence-extension-tangent-bundle}.
Then, by \cite[Remark 3.23]{KS21} there exists an full exceptional collection of the form
\begin{equation*}
  \Db(Z) = \langle \cR, \cA, \cA(1), \cA(2), \cA(3), \cA(4) \rangle.
\end{equation*}
The left mutation of the subcategory $\langle \cA(2), \cA(3), \cA(4) \rangle$ via $\langle \cR, \cA, \cA(1) \rangle$ is given by
the twist by $\cO(-5)$
and we obtain the decomposition
\begin{equation*}
  \Db(Z) = \langle \cA(-3), \cA(-2), \cA(-1), \cR, \cA(1), \cA(2) \rangle.
\end{equation*}
Finally, twisting by $\cO(3)$ we arrive at
\begin{equation}
  \label{eq:collection-D4-P2}
  \Db(Z) = \langle \cA, \cA(1), \cA(2), \cR(3), \cA(3), \cA(4) \rangle.
\end{equation}
This form of the collection will be used in our proof of fullness below.

\medskip

We also need the following lemma that computes restrictions of \eqref{eq:collection-F4-P1} to $Z$.

\begin{lemma}\label{lemma:lifts-from-Y-to-X}
  There exist isomorphisms
  \begin{equation}\label{eq:restriction-U-omega_4}
    i_{\varphi_{g}}^* \cU^{\omega_4} = \barcU^{\baromega_1} \oplus \barcU^{\baromega_3} \oplus \barcU^{\baromega_4}
  \end{equation}
  \begin{equation}\label{eq:restriction-U-2omega_4}
    i_{\varphi_{g}}^* \cU^{2\omega_4} =
    \barcU^{2\baromega_1} \oplus \barcU^{2\baromega_3} \oplus \barcU^{2\baromega_4} \oplus
    \barcU^{\baromega_1 + \baromega_3} \oplus \barcU^{\baromega_1 + \baromega_4} \oplus \barcU^{\baromega_3 + \baromega_4}
  \end{equation}
  \begin{equation}\label{eq:restriction-tangent-bundle}
    i_{\varphi_{g}}^* \widetilde{T}_X = \widetilde{T}_{gZ} \oplus i_{\varphi_{g}}^* \cU^{\omega_4}
  \end{equation}
\end{lemma}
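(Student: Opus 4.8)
The plan is first to reduce to the case $g=e$, $\varphi=s_t$ as in Lemma~\ref{lemma:zero-loci}(ii): the general case follows by applying the translation by $g$, which replaces $\bar{\rG}$ by the conjugate $g\bar{\rG}g^{-1}$, the section $\varphi$ by $g\varphi$, and the embedding by $i_{\varphi_g}\colon gZ\hookrightarrow X$, none of which affects the three $\rG$-equivariant bundles being restricted. Write $i=i_\varphi$ and $Z=\bar{\rG}/\bar{\rP}$. For \eqref{eq:restriction-U-omega_4} I would argue representation-theoretically. Since $\bar{\rP}=\rP\cap\bar{\rG}$ and the maximal tori coincide as in \eqref{eq:common-maximal-torus}, the roots of $\bar{\rG}$ orthogonal to $\omega_1=\baromega_2$ are precisely the corresponding roots of $\rG$; hence $\bar{\rL}\subset\rL$ and the unipotent radical of $\bar{\rP}$ lies in that of $\rP$. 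By functoriality of the equivalence \eqref{eq:equivalence-homogeneous-bundles-P-reps}, $i^*\cU^{\omega_4}$ is therefore the $\bar{\rG}$-equivariant bundle attached to the semisimple $\bar{\rL}$-module $\Res^{\rL}_{\bar{\rL}}V^{\omega_4}_{\rL}$, so it is a direct sum of bundles $\barcU^{\nu}$. A short computation --- using that $V^{\omega_4}_{\rL}$ is the $6$-dimensional standard representation of $\Sp_6\subset\rG$, or alternatively using the decomposition of $V^{\omega_4}|_{\bar{\rG}}$ from Lemma~\ref{lemma:restrictions-of-representations}(4) --- shows that the $\bar{\rL}$-dominant $\rT$-weights of $V^{\omega_4}_{\rL}$ are exactly $\baromega_1,\baromega_3,\baromega_4$, each of multiplicity one; as $2+2+2=6$ this gives $\Res^{\rL}_{\bar{\rL}}V^{\omega_4}_{\rL}=V^{\baromega_1}_{\bar{\rL}}\oplus V^{\baromega_3}_{\bar{\rL}}\oplus V^{\baromega_4}_{\bar{\rL}}$ and hence \eqref{eq:restriction-U-omega_4}.

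For \eqref{eq:restriction-U-2omega_4} I would use that $\cU^{2\omega_4}=S^2\cU^{\omega_4}$ by Lemma~\ref{lemma:exterior-powers-omega_4}, so that $i^*\cU^{2\omega_4}=S^2(i^*\cU^{\omega_4})$, and then decompose the symmetric square of the right-hand side of \eqref{eq:restriction-U-omega_4}. The only inputs are $S^2\barcU^{\baromega_i}=\barcU^{2\baromega_i}$ and $\barcU^{\baromega_i}\otimes\barcU^{\baromega_j}=\barcU^{\baromega_i+\baromega_j}$ for distinct $i,j\in\{1,3,4\}$ on $Z$; these follow from Lemma~\ref{lemma:tensor-exterior-symmetric} together with the elementary $\SL_2^{3}$-decompositions, the central twists all vanishing because $r_\lambda$ is additive in $\lambda$ (a rank check $3\cdot3+3\cdot4=21$ confirms the bookkeeping).

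For \eqref{eq:restriction-tangent-bundle} I would combine three short exact sequences on $Z$. Restricting \eqref{eq:exact-sequence-extension-tangent-bundle} gives $0\to\cO_Z\to i^*\widetilde{T}_X\to i^*T_X\to0$; since $Z=Z(\varphi)$ with $\varphi$ a regular section of $\cU^{\omega_4}$ (Lemma~\ref{lemma:zero-loci}(ii)), the normal bundle is $N_{Z/X}\cong i^*\cU^{\omega_4}$, so $0\to T_Z\to i^*T_X\to i^*\cU^{\omega_4}\to0$. Let $W\subset i^*\widetilde{T}_X$ be the preimage of $T_Z$; then $W$ is an extension of $T_Z$ by $\cO_Z$, i.e.\ a class in $\Ext^1_Z(T_Z,\cO_Z)=H^1(Z,\Omega_Z)$, equal to the image of the nonzero class of \eqref{eq:exact-sequence-extension-tangent-bundle} under the natural map $\Ext^1_X(T_X,\cO_X)\to\Ext^1_Z(T_Z,\cO_Z)$ ($Li^*$ followed by restriction along $T_Z\hookrightarrow i^*T_X$). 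This map is the pullback $H^1(X,\Omega_X)\to H^1(Z,\Omega_Z)$; both groups are one-dimensional and spanned by $c_1(\cO(1))$, and the pullback sends $c_1(\cO_X(1))$ to $c_1(\cO_Z(1))\neq0$, so the class of $W$ is nonzero and $W\cong\widetilde{T}_Z$, the unique non-split extension on $Z$ analogous to \eqref{eq:exact-sequence-extension-tangent-bundle}. It then remains to split $0\to\widetilde{T}_Z\to i^*\widetilde{T}_X\to i^*\cU^{\omega_4}\to0$, for which it suffices that $\Ext^1_Z(i^*\cU^{\omega_4},\widetilde{T}_Z)=0$; by \eqref{eq:restriction-U-omega_4} this reduces to $\Ext^1_Z(\barcU^{\baromega_i},\widetilde{T}_Z)=0$ for $i\in\{1,3,4\}$, which I would verify by Borel--Weil--Bott on $\OG(2,8)$ via the $3$-step filtration of $\widetilde{T}_Z$ with graded pieces $\cO_Z$, $\cO_Z(1)$ and an irreducible rank-$8$ bundle (the analogue on $\OG(2,8)$ of \eqref{eq:exact-sequence-tangent-bundle}--\eqref{eq:exact-sequence-extension-tangent-bundle}), together with $(\barcU^{\baromega_i})^\vee=\barcU^{\baromega_i}(-1)$; alternatively it follows from the semiorthogonalities of the full exceptional collection \eqref{eq:collection-D4-P2} of \cite{KS21} combined with Serre duality.

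The main obstacle is this last vanishing: parts \eqref{eq:restriction-U-omega_4} and \eqref{eq:restriction-U-2omega_4} are essentially bookkeeping with $\SL_2^{3}$-representations, whereas establishing $\Ext^1_Z(\barcU^{\baromega_i},\widetilde{T}_Z)=0$ requires either a somewhat lengthy Borel--Weil--Bott computation on $\OG(2,8)$ or a careful matching with the correct twists in the collection of \cite{KS21}; one must also take care, in the argument that $W\cong\widetilde{T}_Z$, to identify the one-dimensional $\Ext$- and cohomology groups with the ample generators of $\Pic X$ and $\Pic Z$.
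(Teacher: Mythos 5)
Your proposal is correct and follows essentially the same approach as the paper's proof. The differences are organizational rather than substantive, so let me just record how they line up.

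For \eqref{eq:restriction-U-omega_4} the paper also begins by observing that the unipotent radical of $\bar{\rP}$ lies in that of $\rP$, hence $i^*\cU^{\omega_4}$ is completely reducible; the paper then pins down the summands by global generation by $V^{\omega_4}$ together with Lemma~\ref{lemma:restrictions-of-representations}(4) and Borel--Weil--Bott, whereas you decompose $\Res^{\rL}_{\bar{\rL}}V^{\omega_4}_{\rL}$ directly as an $\SL_2^3$-module. Both are valid routes to the same answer, and for \eqref{eq:restriction-U-2omega_4} the two proofs are identical (symmetric square of \eqref{eq:restriction-U-omega_4}).

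For \eqref{eq:restriction-tangent-bundle} the ingredients you use are precisely the paper's: the restricted sequence $0\to\cO_Z\to i^*\widetilde{T}_X\to i^*T_X\to0$, the normal bundle sequence $0\to T_Z\to i^*T_X\to i^*\cU^{\omega_4}\to0$, the non-vanishing of the pullback $H^1(X,\Omega_X)\to H^1(Z,\Omega_Z)$ (the paper passes through $H^2(\cdot,\bC)$ via the Lefschetz principle), and the vanishing $\Ext^\bullet(i^*\cU^{\omega_4},\widetilde{T}_Z)=0$ for the final splitting. The paper's bookkeeping differs slightly: it first proves that the normal bundle sequence itself splits, using $\Ext^\bullet(i^*\cU^{\omega_4},\cO_Z)=0$ in addition, so that $\Ext^\bullet(i^*T_X,\cO_Z)=\Bbbk[-1]$ and $i^*\widetilde{T}_X$ is the unique non-trivial extension of $i^*T_X\cong T_Z\oplus i^*\cU^{\omega_4}$ by $\cO_Z$, namely $\widetilde{T}_Z\oplus i^*\cU^{\omega_4}$. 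You instead extract $W$ as the preimage of $T_Z$ and show $W\cong\widetilde{T}_Z$; this avoids splitting the normal bundle sequence at the outset, which is a perfectly reasonable reorganization. Finally, the ``main obstacle'' you flag --- the vanishing $\Ext^\bullet(i^*\cU^{\omega_4},\widetilde{T}_Z)=0$ --- is handled in the paper exactly by the second method you suggest: the semiorthogonality of the collection \eqref{eq:collection-D4-P2} gives $\Ext^\bullet(\widetilde{T}_Z,i^*\cU^{\omega_4})=0$, and dualizing via $(\widetilde{T}_Z)^\vee=\widetilde{T}_Z(-1)$ and $(i^*\cU^{\omega_4})^\vee=i^*\cU^{\omega_4}(-1)$ yields the needed vanishing without any additional Borel--Weil--Bott computation on $\OG(2,8)$.
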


\begin{proof}
  It is enough to prove the claims for $g = 1$, i.e. we are considering the natural embedding
  $i \colon \bar{\rG}/\bar{\rP} \subset \rG/\rP$.

  To prove \eqref{eq:restriction-U-omega_4} we proceed as follows.
  Since $\cU^{\omega_4}$ is irreducible, the unipotent radical of the parabolic $\rP$
  acts trivially on the $\rP$-representation $E$ corresponding to $\cU^{\omega_4}$
  under the equivalence \eqref{eq:equivalence-homogeneous-bundles-P-reps}.
  The $\bar{\rP}$-representation corresponding to $i^* \cU^{\omega_4}$ under
  \eqref{eq:equivalence-homogeneous-bundles-P-reps} is simply the restriction
  $\Res^{\rP}_{\bar{\rP}} (E)$ of the $\rP$-representation $E$ to $\bar{\rP}$,
  and, since the unipotent radical of $\bar{\rP}$ is contained in the unipotent
  radical of $\rP$, the latter also acts trivially on $E$. Therefore, $\Res^{\rP}_{\bar{\rP}} (E)$
  is completely reducible and we obtain the complete reducibility of $i^* \cU^{\omega_4}$,
  i.e. it is a direct sum of irreducible $\bar{\rG}$-equivariant vector bundles.
  Finally, since $\cU^{\omega_4}$ is generated by its global sections
  $H^0(X, \cU^{\omega_4}) = V^{\omega_4}$, the restriction $i^* \cU^{\omega_4}$
  is also globally generated. Using the description of $V^{\omega_4}|_{\bar{\rG}}$
  given in Lemma \ref{lemma:restrictions-of-representations}(4) and the Borel--Bott--Weil
  theorem we get the desired claim.

  \smallskip

  To prove \eqref{eq:restriction-U-2omega_4} we note that according to Lemma \ref{lemma:exterior-powers-omega_4}
  we have $\cU^{2\omega_4} = S^2 \cU^{\omega_4}$. Hence, we have $i^* \cU^{2\omega_4} = S^2 \left( i^*\cU^{\omega_4} \right)$. We leave the rest as an exercise.

  \smallskip

  To prove \eqref{eq:restriction-tangent-bundle} we proceed as follows. First, we claim that all the statements of
  Lemma~\ref{lemma:definition-of-T_X_tilde} hold verbatim for $T_Z$ and $\widetilde{T}_{Z}$ if we replace
  $\cU^{\omega_2}(-1)$ with $\barcU^{\baromega_1 + \baromega_3 + \baromega_4}(-1)$; the proof is completely
  analogous. In particular, we have $\Ext^\bullet(T_{Z}, \cO) = \Bbbk[-1]$ and $\left( \widetilde{T}_{Z} \right)^\vee = \widetilde{T}_{Z}(-1)$.

  From \eqref{eq:collection-D4-P2} and \eqref{eq:restriction-U-omega_4} we have
  \begin{equation*}
    \Ext^\bullet(\widetilde{T}_{Z}, i^* \cU^{\omega_4}) = 0.
  \end{equation*}
  Dualizing both sides and using the isomorphisms $\left( \widetilde{T}_{Z} \right)^\vee = \widetilde{T}_{Z}(-1)$
  and $\left( i^* \cU^{\omega_4} \right)^\vee = i^* \cU^{\omega_4}(-1)$ we get
  \begin{equation*}
    \Ext^\bullet(i^* \cU^{\omega_4}, \widetilde{T}_{Z}) = 0.
  \end{equation*}
  Since by \eqref{eq:collection-D4-P2}--\eqref{eq:restriction-U-omega_4} we also have $\Ext^\bullet(i^* \cU^{\omega_4}, \cO) = 0$, we obtain $\Ext^\bullet(i^* \cU^{\omega_4}, T_{Z}) = 0$ and the normal bundle exact sequence
  \begin{equation}\label{eq:normal-bundle-ses-1}
    0 \to T_{Z} \to i^* T_X \to i^* \cU^{\omega_4} \to 0
  \end{equation}
  splits.

  Consider the maps
  \begin{equation*}
    \xymatrix{
    \Ext^1(T_X, \cO) \ar[r]^<<<<{\gamma} & \Ext^1(i^*T_X, \cO) \ar[r]^<<<<{\delta} & \Ext^1(T_Z, \cO)
    }
  \end{equation*}
  where $\gamma$ is given by the pullback functor and $\delta$ is given by the morphism
  $T_{Z} \to i^* T_X$. We claim that the composition $\gamma \circ \delta$ is non-zero.
  Indeed, by the Lefschetz principle we can assume $\Bbbk = \bC$. Under the identifications
  \begin{equation*}
    \Ext^1(T_X, \cO) = H^1(X, \Omega_X) = H^2(X, \bC)
    \quad \text{and} \quad
    \Ext^1(T_Z, \cO) = H^1(Z, \Omega_Z) = H^2(Z, \bC)
  \end{equation*}
  the composition $\gamma \circ \delta$ becomes the pullback map on cohomology
  $H^2(X, \bC) \to H^2(Z, \bC)$, which is clearly non-zero.

  Since $\gamma \circ \delta \neq 0$, the pullback
  $i^* \widetilde{T}_X$ is a non-trivial $\bar{\rG}$-equivariant extension of
  $i^* T_X$ by $\cO$. As according to \eqref{eq:normal-bundle-ses-1}
  we have identifications
  $\Ext^\bullet(i^* T_X, \cO) = \Ext^\bullet(T_{Z}, \cO) = \Bbbk[-1]$,
  such an extension is unique.

  Therefore, we can replace the first two terms in \eqref{eq:normal-bundle-ses-1}
  with their unique non-trivial extensions by $\cO$ and get a new short exact sequence
   \begin{equation}\label{eq:normal-bundle-ses-2}
    0 \to \widetilde{T}_{Z} \to i^* \widetilde{T}_X \to i^* \cU^{\omega_4} \to 0.
  \end{equation}
  This finishes the proof.
\end{proof}

\bigskip

We will deduce fullness from the following key fact.
\begin{lemma}\label{lemma:key-lemma}
  Let $\mathbf{\Sigma}$ be the set of vector bundles defined as
  \begin{equation*}
    \mathbf{\Sigma} \coloneqq \{ \cO(t), \cU^{\omega_4}(t), \cU^{2\omega_4}(2) ,\widetilde{T}_X(2) \mid 0 \leq t \leq 4 \}.
  \end{equation*}
  Then for any vector bundle $E \in \mathbf{\Sigma}$ we have inclusions
  \begin{equation}\label{eq:desired-inclusion}
    E \otimes \Lambda^k\cU^{\omega_4} \in \cD \qquad \forall k,
  \end{equation}
  where the subcategory $\cD$ is defined in \eqref{eq:subcategory-D}.
\end{lemma}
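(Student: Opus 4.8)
The plan is to reduce the statement, for every $E \in \mathbf{\Sigma}$, to the containments already recorded in \eqref{eq:subcategory-D} and Corollary~\ref{corollary:containments}. First I would observe that $\rk \cU^{\omega_4} = 6$, so $\Lambda^k \cU^{\omega_4} = 0$ for $k > 6$ and only $k \in [0,6]$ needs to be treated; for these values Lemma~\ref{lemma:exterior-powers-omega_4} expresses $\Lambda^k \cU^{\omega_4}$ as an explicit direct sum of bundles of the form $\cO(j)$, $\cU^{\omega_4}(j)$, $\cU^{\omega_3}(j)$, $\cU^{\omega_2}(j)$ with small $j$. Since $\cD$ is a triangulated subcategory it is closed under finite direct sums, so it suffices to prove $E \otimes F \in \cD$ for each $E \in \mathbf{\Sigma}$ and each summand $F$ appearing in some $\Lambda^k \cU^{\omega_4}$.

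For $E = \cO(t)$ with $0 \le t \le 4$ this is immediate: the bundles $\cO(t) \otimes \Lambda^k \cU^{\omega_4}$ are direct sums of $\cO(j)$ and $\cU^{\omega_4}(j)$ with $j \le 7$, which lie in $\cD$ by \eqref{eq:subcategory-D}, and of $\cU^{\omega_3}(j)$ with $j \le 5$ and $\cU^{\omega_2}(j)$ with $j \le 4$, covered by \eqref{eq:containment-omega_3} and \eqref{eq:containment-omega_2}. For $E = \cU^{\omega_4}(t)$ with $0 \le t \le 4$ and for $E = \cU^{2\omega_4}(2)$, I would expand $E \otimes \Lambda^k \cU^{\omega_4}$ into a direct sum of bundles $\cU^{\lambda}(j)$ by combining Lemma~\ref{lemma:exterior-powers-omega_4} with the tensor product decompositions of Lemma~\ref{lemma:tensor-products-F4-P1}, and then check summand by summand that each $j$ lies in the interval provided by the appropriate line of Corollary~\ref{corollary:containments} (or in $[0,7]$ for $\cO(j)$ and $\cU^{\omega_4}(j)$). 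A few of these are tight: for instance $\cU^{2\omega_4}(2) \otimes \Lambda^3 \cU^{\omega_4}$ contains $\cU^{\omega_2 + 2\omega_4}(2)$, which is available only because \eqref{eq:containment-omega_2+2omega_4} reaches $m = 2$, and $\cU^{3\omega_4}(3)$, available only because \eqref{eq:containment-3omega_4} reaches $m = 4$; but in every instance the required twist stays inside the stated range.

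The remaining bundle $E = \widetilde{T}_X(2)$ is not irreducible, so here I would instead use the short exact sequences \eqref{eq:exact-sequence-tangent-bundle} and \eqref{eq:exact-sequence-extension-tangent-bundle}, which give $\widetilde{T}_X \in \langle \cO, \cO(1), \cU^{\omega_2}(-1) \rangle$. Tensoring with $\Lambda^k \cU^{\omega_4}(2)$ is exact, so $\widetilde{T}_X(2) \otimes \Lambda^k \cU^{\omega_4}$ lies in the triangulated hull of $\cO(2) \otimes \Lambda^k \cU^{\omega_4}$, $\cO(3) \otimes \Lambda^k \cU^{\omega_4}$ and $\cU^{\omega_2}(1) \otimes \Lambda^k \cU^{\omega_4}$. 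The first two are already handled by the case $E = \cO(t)$ with $t = 2, 3$, and for the third I would run the same direct-sum check as before: expanding $\cU^{\omega_2}(1) \otimes \Lambda^k \cU^{\omega_4}$ via Lemmas~\ref{lemma:tensor-products-F4-P1} and \ref{lemma:exterior-powers-omega_4} produces summands such as $\cU^{2\omega_2}(1)$ (covered by \eqref{eq:containment-2omega_2}), $\cU^{2\omega_3}(2)$, $\cU^{\omega_2 + \omega_3}(1)$, $\cU^{\omega_2 + \omega_3}(2)$, $\cU^{\omega_2 + \omega_4}(1)$, $\cU^{\omega_2 + \omega_4}(3)$, and so on, all lying in the ranges of Corollary~\ref{corollary:containments}.

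I do not expect a conceptual obstacle here: once Corollary~\ref{corollary:containments} is in hand the whole argument is a bookkeeping verification that a few dozen twisted equivariant bundles land in the prescribed intervals. The only thing that requires care is that several of these intervals are attained exactly at the endpoints, so the decompositions from Lemmas~\ref{lemma:tensor-products-F4-P1} and \ref{lemma:exterior-powers-omega_4} must be applied without error; the main "work" of the proof is precisely this tabulation.
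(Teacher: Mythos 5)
Your proposal is correct and follows essentially the same route as the paper: reduce to irreducible composition factors, expand every $E \otimes \Lambda^k \cU^{\omega_4}$ via Lemmas~\ref{lemma:tensor-products-F4-P1} and~\ref{lemma:exterior-powers-omega_4}, and check that each resulting summand $\cU^\lambda(m)$ lies in the intervals of Corollary~\ref{corollary:containments} (or directly in the collection \eqref{eq:subcategory-D}). In particular your treatment of $\widetilde{T}_X(2)$ — replacing it with the triangulated hull of $\cO(2)$, $\cO(3)$, $\cU^{\omega_2}(1)$ via \eqref{eq:exact-sequence-tangent-bundle} and \eqref{eq:exact-sequence-extension-tangent-bundle} and using exactness of tensoring by a bundle — is exactly the paper's reduction to the semisimplification $\ss(\widetilde{T}_X)(2)$, and your flagged borderline summands (e.g.\ $\cU^{\omega_2+2\omega_4}(2)$ and $\cU^{3\omega_4}$ at the top of its range) are the genuinely tight points of the verification.
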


\begin{proof}
  We consider four cases:
  \begin{enumerate}
    \item {\bf Case $E \in \{ \cO(t) \mid 0 \leq t \leq 4 \}$.} Using Lemma \ref{lemma:exterior-powers-omega_4} we see that the irreducible summands of $\Lambda^k \cU^{\omega_4}(t)$ for $0 \leq t \leq 4$ are
    \begin{equation*}
      \cO(m) \quad \text{for} \quad m \in [0,7],
    \end{equation*}
    \begin{equation*}
      \cU^{\omega_4}(m) \quad \text{for} \quad m \in [0,6],
    \end{equation*}
    \begin{equation*}
      \cU^{\omega_3}(m) \quad \text{for} \quad m \in [0,5],
    \end{equation*}
    \begin{equation*}
      \cU^{\omega_2}(m) \quad \text{for} \quad m \in [0,4].
    \end{equation*}
    Hence, the desired inclusion \eqref{eq:desired-inclusion} follows from \eqref{eq:containment-omega_3}--\eqref{eq:containment-omega_2}.

    \medskip

    \item {\bf Case $E \in \{\cU^{\omega_4}(t) \mid 0 \leq t \leq 4 \}$.} Using
    Lemmas \ref{lemma:tensor-products-F4-P1} and \ref{lemma:exterior-powers-omega_4} we see that
    the irreducible summands of $\cU^{\omega_4}(t) \otimes \Lambda^k \cU^{\omega_4}$ for $0 \leq t \leq 4$ are
    \begin{equation*}
      \cO(m) \quad \text{for} \quad m \in [1,7],
    \end{equation*}
    \begin{equation*}
      \cU^{\omega_4}(m) \quad \text{for} \quad m \in [0,7],
    \end{equation*}
    \begin{equation*}
      \cU^{\omega_3}(m) \quad \text{for} \quad m \in [0,6],
    \end{equation*}
    \begin{equation*}
      \cU^{\omega_2}(m) \quad \text{for} \quad m \in [0,5],
    \end{equation*}
    \begin{equation*}
      \cU^{2\omega_4}(m) \quad \text{for} \quad m \in [0,6],
    \end{equation*}
    \begin{equation*}
      \cU^{\omega_3 + \omega_4}(m) \quad \text{for} \quad m \in [0,5],
    \end{equation*}
    \begin{equation*}
      \cU^{\omega_2 + \omega_4}(m) \quad \text{for} \quad m \in [0,4].
    \end{equation*}
    Hence, the desired inclusions \eqref{eq:desired-inclusion} hold by Corollary \ref{corollary:containments}.

    \medskip

    \item {\bf Case $E = \cU^{2\omega_4}(2)$.} Using Lemmas \ref{lemma:tensor-products-F4-P1} and \ref{lemma:exterior-powers-omega_4} we see that
    the irreducible summands of $\cU^{2\omega_4}(2) \otimes \Lambda^k \cU^{\omega_4}$ are
    \begin{equation*}
      \cU^{2\omega_4}(m) \quad \text{for} \quad m \in [2,5],
    \end{equation*}
    \begin{equation*}
      \cU^{3\omega_4}(m) \quad \text{for} \quad m \in [2,4],
    \end{equation*}
    \begin{equation*}
      \cU^{\omega_3 + \omega_4}(m) \quad \text{for} \quad m \in [2,4],
    \end{equation*}
    \begin{equation*}
      \cU^{\omega_4}(m) \quad \text{for} \quad m \in [3,5],
    \end{equation*}
    \begin{equation*}
      \cU^{\omega_3 + 2\omega_4}(m) \quad \text{for} \quad m \in [2,3],
    \end{equation*}
    \begin{equation*}
      \cU^{\omega_2 + \omega_4}(m) \quad \text{for} \quad m \in [2,3],
    \end{equation*}
    \begin{equation*}
      \cU^{\omega_3}(m) \quad \text{for} \quad m \in [3,4],
    \end{equation*}
    \begin{equation*}
      \cU^{\omega_2 + 2\omega_4}(2),
    \end{equation*}
    \begin{equation*}
      \cU^{\omega_2}(3).
    \end{equation*}
    Hence, the desired inclusions \eqref{eq:desired-inclusion} hold by Corollary \ref{corollary:containments}.

    \medskip

    \item {\bf Case $E = \widetilde{T}_X(2)$.} To show the desired inclusions \eqref{eq:desired-inclusion}
    it suffices to work with the direct summands of the semisimplification $\ss(\widetilde{T}_X)(2)$.
    By \eqref{eq:exact-sequence-tangent-bundle}--\eqref{eq:exact-sequence-extension-tangent-bundle} we have
    \begin{equation*}
      \ss(\widetilde{T}_X)(2) = \cU^{\omega_2}(1) \oplus \cO(2) \oplus \cO(3).
    \end{equation*}

    Using Lemmas \ref{lemma:tensor-products-F4-P1} and \ref{lemma:exterior-powers-omega_4} we see that
    the irreducible summands of $\ss(\widetilde{T}_X)(2) \otimes \Lambda^k \cU^{\omega_4}$ are
    \begin{equation*}
      \cO(m) \quad \text{for} \quad m \in [2,6],
    \end{equation*}
    \begin{equation*}
      \cU^{\omega_4}(m) \quad \text{for} \quad m \in [2,5],
    \end{equation*}
    \begin{equation*}
      \cU^{\omega_3}(m) \quad \text{for} \quad m \in [2,4],
    \end{equation*}
    \begin{equation*}
      \cU^{\omega_2}(m) \quad \text{for} \quad m \in [1,4],
    \end{equation*}
    \begin{equation*}
      \cU^{\omega_2 + \omega_4}(m) \quad \text{for} \quad m \in [1,3],
    \end{equation*}
    \begin{equation*}
      \cU^{\omega_2 + \omega_3}(m) \quad \text{for} \quad m \in [1,2],
    \end{equation*}
    \begin{equation*}
      \cU^{\omega_3 + \omega_4}(m) \quad \text{for} \quad m \in [2,3],
    \end{equation*}
    \begin{equation*}
      \cU^{2\omega_2}(1),
    \end{equation*}
    \begin{equation*}
      \cU^{2\omega_3}(2),
    \end{equation*}
    \begin{equation*}
      \cU^{2\omega_4}(3).
    \end{equation*}
    Hence, the desired inclusions \eqref{eq:desired-inclusion} hold by Corollary \ref{corollary:containments}.
  \end{enumerate}
\end{proof}

Finally, we are now ready to prove fullness of \eqref{eq:collection-F4-P1}.

\begin{proposition}\label{proposition:fullness}
  If $F \in \Db(X)$ is right orthogonal to all vector bundles $E$ in the exceptional collection \eqref{eq:collection-F4-P1}, i.e. $\Ext^\bullet(E,F) = 0$, then $F = 0$.
\end{proposition}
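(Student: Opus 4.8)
The plan is to deduce $F=0$ from Lemma~\ref{lemma:kuznetsov}: it suffices to prove that the restriction $i_{\varphi_g}^*F$ vanishes for every section $\varphi_g\in H^0(X,\cU^{\omega_4})$ of the form considered above, where $i_{\varphi_g}\colon gZ\hookrightarrow X$ realises $gZ\cong\OG(2,8)$ as the zero locus of the regular section $\varphi_g$ (regularity by Lemma~\ref{lemma:zero-loci}(ii)). Fix such a $g$ and abbreviate $i=i_{\varphi_g}$ and $\cU=\cU^{\omega_4}$. Since \eqref{eq:collection-D4-P2} is a full exceptional collection on $gZ$, it is enough to show that $i^*F$ is right orthogonal to each of its generators.

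First I would establish the following \emph{Koszul transfer principle}: for any vector bundle $\tilde{G}$ on $X$ with $\tilde{G}\otimes\Lambda^k\cU\in\cD$ for all $k$, one has $\RHom_{gZ}(i^*\tilde{G},i^*F)=0$. Indeed, using $(i^*\tilde{G})^\vee=i^*(\tilde{G}^\vee)$, the projection formula, and $R\Gamma(gZ,-)=R\Gamma(X,i_*(-))$, we obtain
\begin{equation*}
  \RHom_{gZ}(i^*\tilde{G},i^*F)\;\cong\;R\Gamma\bigl(X,\,\tilde{G}^\vee\otimes F\otimes i_*\cO_{gZ}\bigr).
\end{equation*}
Because $\varphi_g$ is regular of codimension $\rk\cU=6$, the Koszul complex $\Lambda^6\cU^\vee\to\cdots\to\cU^\vee\to\cO_X$ is a resolution of $i_*\cO_{gZ}$; substituting it and using $(\Lambda^k\cU)^\vee=\Lambda^k\cU^\vee$ yields a spectral sequence converging to the right hand side with terms
\begin{equation*}
  R\Gamma\bigl(X,\,(\tilde{G}\otimes\Lambda^k\cU)^\vee\otimes F\bigr)=\RHom_X(\tilde{G}\otimes\Lambda^k\cU,\,F),\qquad 0\le k\le 6,
\end{equation*}
all of which vanish since $\tilde{G}\otimes\Lambda^k\cU\in\cD$ and $F\in\cD^\perp$. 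Applying this with $\tilde G$ running over $\mathbf{\Sigma}$, Lemma~\ref{lemma:key-lemma} gives $\RHom_{gZ}(i^*\tilde{G},i^*F)=0$ for every $\tilde{G}\in\mathbf{\Sigma}$.

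It remains to see that every generator of \eqref{eq:collection-D4-P2} occurs as a direct summand of $i^*\tilde{G}$ for some $\tilde{G}\in\mathbf{\Sigma}$; since direct summands inherit the vanishing of $\RHom(-,i^*F)$, this forces $i^*F$ to be right orthogonal to the whole collection \eqref{eq:collection-D4-P2}, hence $i^*F=0$. This is exactly the content of Lemma~\ref{lemma:lifts-from-Y-to-X}: for $0\le t\le 4$ the summands of $i^*\cO(t)=\cO_{gZ}(t)$ and of $i^*\cU^{\omega_4}(t)=\barcU^{\baromega_1}(t)\oplus\barcU^{\baromega_3}(t)\oplus\barcU^{\baromega_4}(t)$ are precisely the generators of $\cA(t)$; the summands of $i^*\cU^{2\omega_4}(2)$ include $\barcU^{2\baromega_1}(2),\barcU^{2\baromega_3}(2),\barcU^{2\baromega_4}(2)$, while $i^*\widetilde{T}_X(2)=\widetilde{T}_{gZ}(2)\oplus i^*\cU^{\omega_4}(2)$ contributes $\widetilde{T}_{gZ}(2)$, and these together are the generators of $\cR(3)$. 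As $g$ was arbitrary, Lemma~\ref{lemma:kuznetsov} then yields $F=0$.

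The substantive work has been front-loaded into Lemma~\ref{lemma:key-lemma} (which rests on the monads of Lemma~\ref{lemma:monads} and the containments of Corollary~\ref{corollary:containments}) and into the restriction computations of Lemma~\ref{lemma:lifts-from-Y-to-X}, so the only genuinely delicate points left are: (a) organising the Koszul transfer so that no twist by $\omega_{gZ/X}$ intervenes — this is why one computes $\RHom$ on $gZ$ directly via the projection formula rather than through the $i_*\dashv i^!$ adjunction; and (b) verifying that the list of summands above really exhausts the generators of \eqref{eq:collection-D4-P2}, a bookkeeping exercise relying on \eqref{eq:restriction-U-omega_4}--\eqref{eq:restriction-tangent-bundle} together with Corollary~\ref{corollary:duals} for the identification of $(\cU^{\omega_4})^\vee$.
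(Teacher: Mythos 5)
Your proposal is correct and follows essentially the same route as the paper: you reduce to vanishing of $i_{\varphi_g}^*F$ via Lemma~\ref{lemma:kuznetsov}, deduce $\Ext^\bullet(i_{\varphi_g}^*E,i_{\varphi_g}^*F)=0$ for $E\in\mathbf{\Sigma}$ from the Koszul resolution of $i_*\cO_{gZ}$ together with Lemma~\ref{lemma:key-lemma} and $F\in\cD^\perp$, and then match the summands of $i^*\mathbf{\Sigma}$ against the generators of \eqref{eq:collection-D4-P2} using Lemma~\ref{lemma:lifts-from-Y-to-X}. Your packaging as a ``Koszul transfer principle'' and the explicit $\RHom$/projection-formula phrasing are presentational; the underlying argument coincides with the paper's.
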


\begin{proof}
  Recall the definition of the set $\mathbf{\Sigma}$ from Lemma \ref{lemma:key-lemma}.
  By our assumptions we have that for any vector bundle $E \otimes \Lambda^k \cU^{\omega_4}$
  with $E \in \mathbf{\Sigma}$ there are vanishings
  \begin{equation}\label{eq:semiorthogonality-assumption}
    \Ext^{\bullet}(E \otimes \Lambda^k \cU^{\omega_4},F) = H^{\bullet}(X, \left( E \otimes \Lambda^k \cU^{\omega_4} \right)^\vee \otimes F) = 0.
  \end{equation}

  Consider the embedding $i_{\varphi_{g}} \colon gZ \hookrightarrow X$.
  Since the sections $\varphi_{g}$ are regular,
  we have the Koszul resolution
  \begin{equation}
   \label{eq:koszul-complex}
   0 \to \cO(-3) \to \dots \to (\Lambda^2 \cU^{\omega_4})^\vee \to (\cU^{\omega_4})^\vee \to \cO_X \to i_{(\varphi_{g})*} \cO_{gZ} \to 0.
  \end{equation}
  Let us now tensor the Koszul complex \eqref{eq:koszul-complex} by $E^\vee \otimes F$ with $E \in \mathbf{\Sigma}$
  \begin{equation*}
    \dots \to (\Lambda^2 \cU^{\omega_4})^\vee \otimes (E^\vee \otimes F) \to
    (\cU^{\omega_4})^\vee \otimes (E^\vee \otimes F) \to E^\vee \otimes F \to
    i_{(\varphi_{g})*} i_{\varphi_{g}}^*(E^\vee \otimes F) \to 0.
  \end{equation*}
  By \eqref{eq:semiorthogonality-assumption} we have that the cohomology
  groups of $(\Lambda^k \cU^{\omega_4})^\vee \otimes (E^\vee \otimes F)$ vanish. Hence, the cohomology
  groups of $i_{(\varphi_{g})*} i_{\varphi_{g}}^*(E^\vee \otimes F)$ also vanish and we have
  \begin{equation}\label{eq:proof-of-fullness-vanishing}
    0 = H^\bullet(X, i_{(\varphi_{g})*} i_{\varphi_{g}}^*(E^\vee \otimes F)) = H^\bullet(\OG(2,8), i_{\varphi_{g}}^*(E^\vee \otimes F)) = \Ext^\bullet(i_{\varphi_{g}}^* E , i_{\varphi_{g}}^* F)
  \end{equation}
  Hence, $i_{\varphi_{g}}^* F$ is right orthogonal to the objects $i_{\varphi_{g}}^* E$ with $E \in \mathbf{\Sigma}$.

  Finally, by Lemma \ref{lemma:lifts-from-Y-to-X} and \eqref{eq:proof-of-fullness-vanishing}
  we get that $i_{\varphi_{g}}^* F$ is right orthogonal to all objects in the collection
  \eqref{eq:collection-D4-P2}. Hence, we obtain $i_{\varphi_{g}}^* F = 0$. Since this holds
  for any section $\varphi_{g}$, we get $F = 0$ by Lemma \ref{lemma:kuznetsov}.
\end{proof}

\bibliographystyle{plain}
\bibliography{refs}

\end{document}